\newtheorem{theorem}{Theorem}[section]
\newtheorem{lemma}{Lemma}[section]
\newtheorem{proposition}{Proposition}[section]
\newtheorem{assumption}{Assumption}[section]
\theoremstyle{definition}
\newtheorem{definition}{Definition}[section]
\newtheorem{remark}{Remark}[section]
\newtheorem{example}{Example}[section]
\newcommand{\Ep}{\mathrm{E}}
\renewcommand{\tilde}{\widetilde}
\renewcommand{\hat}{\widehat}
\DeclareMathOperator{\Var}{Var}
\DeclareMathOperator{\E}{E}
\DeclareMathOperator{\Prob}{P}
\DeclareMathOperator{\supp}{supp}
\DeclareMathOperator{\relu}{ReLU}
\newcommand{\argmin}{\mathop{\rm arg~min}\limits}
\def\ben#1{\begin{equation}#1\end{equation}}
\def\bm#1{\begin{multline*}#1\end{multline*}}
\def\ba#1{\begin{align*}#1\end{align*}}
\newcommand{\eps}{\varepsilon}
\newcommand{\bol}[1]{\boldsymbol{#1}}
\newcommand{\mcl}[1]{\mathcal{#1}}
\newcommand{\fun}[1]{\texttt{#1}}
\newcommand{\pck}[1]{{\bfseries#1}}
\newcommand{\bra}[1]{\left(#1\right)}
\begin{document}

\title[]{Adaptive deep learning for nonlinear time series models}
\thanks{D. Kurisu is partially supported by JSPS KAKENHI Grant Numbers 20K13468. and 23K12456.  
Y. Koike is partially supported by JST CREST Grant Number JPMJCR2115 and JSPS KAKENHI Grant Number 19K13668. 
The authors would like to thank Emmanuel Gobet, Stefan Mittnik, Taiji Suzuki, and Yoshikazu Terada for their helpful comments and suggestions. 
} 

\author[D. Kurisu]{Daisuke Kurisu}
\author[R. Fukami]{Riku Fukami}
\author[Y. Koike]{Yuta Koike}

\date{First version: January 17, 2022. This version: \today}

\address[D. Kurisu]{Center for Spatial Information Science, The University of Tokyo, 
5-1-5, Kashiwanoha, Kashiwa-shi, Chiba 277-8568, Japan.
}
\email{daisukekurisu@csis.u-tokyo.ac.jp}

\address[R. Fukami]{Graduate School of Mathematical Science, The University of Tokyo\\
3-8-1 Komaba, Meguro-ku, Tokyo 153-8914, Japan.
}
\email{rick.h.azuma@gmail.com}

\address[Y. Koike]{Graduate School of Mathematical Science, The University of Tokyo\\
3-8-1 Komaba, Meguro-ku, Tokyo 153-8914, Japan.
}
\email{kyuta@ms.u-tokyo.ac.jp}

\begin{abstract}
In this paper, we develop a general theory for adaptive nonparametric estimation of the mean function of a nonstationary and nonlinear time series model using deep neural networks (DNNs). We first consider two types of DNN estimators, non-penalized and sparse-penalized DNN estimators, and establish their generalization error bounds for general nonstationary time series. We then derive minimax lower bounds for estimating mean functions belonging to a wide class of nonlinear autoregressive (AR) models that include nonlinear generalized additive AR, single index, and threshold AR models. Building upon the results, we show that the sparse-penalized DNN estimator is adaptive and attains the minimax optimal rates up to a poly-logarithmic factor for many nonlinear AR models. Through numerical simulations, we demonstrate the usefulness of the DNN methods for estimating nonlinear AR models with intrinsic low-dimensional structures and discontinuous or rough mean functions, which is consistent with our theory.
\end{abstract}

\keywords{adaptive estimation, deep neural network, nonparametric regression, minimax optimality, time series.\\ \indent
\textit{MSC2020 subject classifications}: 62G08, 62M10, 68T07}

\maketitle

\section{Introduction}

Motivated by the great success of deep neural networks (DNNs) in several applications such as pattern recognition and natural language processing, there has been an increasing interest in revealing the reason why DNNs work well from the statistical point of view. In the past few years, many researchers have contributed to understand theoretical advantages of DNN estimates for nonparametric regression models. See, for example, \cite{BaKo19}, \cite{ImFu19}, \cite{Sc19, Sc20}, \cite{Su19}, \cite{HaSu20}, \cite{NaIm20}, \cite{KoLa21}, \cite{SuNi21}, \cite{TsSu21},  and references therein.

In contrast to the recent progress of DNNs, theoretical results on statistical properties of DNN methods for stochastic processes are scarce. As exceptional studies, we refer to \cite{PhRi20}, \cite{KoKr20}, \cite{Og21}, and \cite{OgKo21}. \cite{PhRi20} investigates forecasting ability of feed-forward DNNs for stationary processes and derive bounds for an expected forecast error. \cite{KoKr20} consider a time series prediction problem and investigate the convergence rate of a deep recurrent neural network estimate. \cite{Og21} considers DNN estimation for the diffusion matrices and studies their estimation errors as misspecified parametric models. \cite{OgKo21} investigate nonparametric drift estimation of a multivariate diffusion process. Notably, there seem no theoretical results on the statistical properties of feed-forward DNN estimators for nonparametric estimation of the mean functions of nonlinear and possibly nonstationary time series models.

The goal of this paper is to develop a general theory for adaptive nonparametric estimation of the mean function of a nonlinear time series using DNNs. The contributions of this paper are as follows.

First, we provide bounds of (i) generalization error (Lemma C.1) and (ii) expected empirical error (Lemma C.2) of general estimators of the mean function of a nonlinear and nonstationary $\beta$-mixing time series. We note that Lemma C.1 allows the $\beta$-mixing coefficients to decay both polynomially and exponentially fast and are of independent theoretical interest since they can be useful to investigate asymptotic properties of nonparametric estimators including DNNs. Building upon the results, we establish a generalization error bound of non-penalized DNN estimators (Theorem \ref{thm: pred-error-bound}) with a $C$-Lipschitz activation function (e.g., rectified linear unit (ReLU), LeakyReLU, sigmoid, and softplus). 

Second, we consider a sparse-penalized DNN estimator which is defined as a minimizer of an empirical risk with a sparse penalty and develop its asymptotic properties. In particular, we establish a generalization error bound of the sparse penalized DNN estimator (Theorem \ref{thm: pred-error-bound2}) with a $C$-Lipschitz activation function when the observations are $\beta$-mixing and can be nonstationary. While the result is shown under the condition that the $\beta$-mixing coefficients decay exponentially fast, it is straightforward to extend it to the case that the $\beta$-mixing coefficients decay polynomially
fast. We note that our conditions on the penalty function cover several examples such as the clipped $L_1$ penalty \citep{Zh10}, the SCAD penalty \citep{FaLi01}, the minimax concave penalty \citep{Zh10mcp}, and the seamless $L_0$ penalty \citep{DHL13}, and the generalization error bound enables us to estimate mean functions of nonlinear time series models adaptively. Our work can be viewed as extensions of the results in \cite{Sc20} and \cite{OhKi22} for independent observations to nonstationary time series. From the technical point of view, our analysis is related to the strategy in \cite{Sc20}. Due to the existence of temporal dependence, the extensions are nontrivial and we achieve this by developing a new strategy to obtain generalization error bounds for dependent data using a blocking technique for $\beta$-mixing processes and exponential inequalities for self-normalized martingale difference sequences. It shall be noted that our approach is also quite different from that of \cite{OhKi22} since their approach strongly depends on the independence of observations and our generalization error bounds of the sparse penalized DNN estimator improve the power of the logs in their bounds. More detailed differences are discussed in Section \ref{subsec:pdnn} and the supplementary material. Our approach to deriving generalization error bounds paves a way to new techniques for studying statistical properties of machine learning methods for more richer classes of models for dependent data including time series and spatial data.

Third, 
we establish that the sparse-penalized DNN estimators achieve minimax rates of convergence up to a poly-logarithmic factor over a wide class of nonlinear AR($d$) processes including generalized additive AR models and functional coefficient AR models introduced in \cite{ChTs93} that allow discontinuous mean functions. When the mean function belongs to a class of suitably smooth functions (e.g., H\"older space), one can use other nonparametric estimators for adaptively estimating the mean function (see \cite{Ho99} for example). Similar assumptions on the smoothness of the mean functions have been made in most papers that investigate nonparametric estimation of the mean functions of nonlinear and stationary time series models (\cite{Ro83}, \cite{Tr93}, \cite{Tr94}, \cite{Ma96a, Ma96b}, \cite{Ho99}, \cite{FaYa03}, \cite{ZhWu08}, \cite{Ha08}, \cite{LiWu10}). \cite{Vo12} and \cite{ZhWu15} investigate nonparametric regression of locally stationary (i.e., nonstationary) models. The authors derive uniform convergence rates of kernel estimators of general smooth mean functions. \cite{Vo12} also considers nonparametric estimation of additive mean functions. The generalization error bounds (Theorems \ref{thm: pred-error-bound} and \ref{thm: pred-error-bound2}) in this paper can be applied to nonstationary time series models with more general mean functions that include those considered in \cite{Vo12} and \cite{ZhWu15}. However, the methods in those papers cannot be applied for estimating nonlinear time series models with possibly discontinuous mean functions. Our results show that the sparse-penalized DNN estimation is a unified method for adaptively estimating both smooth and discontinuous mean functions of time series regression models. Further, we shall note that the sparse-penalized DNN estimators attain the parametric rate of convergence up to a logarithmic factor when the mean functions belong to an $\ell^0$-bounded affine class that include (multi-regime) threshold AR processes (Theorems \ref{thm:minimax-l0} and \ref{thm:rate-l0}).

In addition to the theoretical results, we also conduct simulation studies to investigate the finite sample performance of the DNN estimators. We find that the DNN methods work well for the models with (i) intrinsic low-dimensional structures and (ii) discontinuous or rough mean functions. These results are consistent with our main results.

To summarize, this paper contributes to the literature on nonparametric estimation of nonlinear and nonstationary time series by establishing (i) the theoretical validity of non-penalized and sparse-penalized DNN estimators for the adaptive nonparametric estimation of mean functions of nonlinear time series models and (ii) show the optimality of the sparse-penalized DNN estimator for a wide class of nonlinear AR processes.

The rest of the paper is organized as follows. In Section \ref{sec:setting}, we introduce nonparametric regression models considered in this paper and demonstrate that they cover a range of nonlinear time series models. In Section \ref{sec:main}, we provide generalization error bounds of (i) the non-penalized and (ii) the sparse-penalized DNN estimators. In Section \ref{sec:ar-model}, we present the minimax optimality of the sparse-penalized DNN estimators and show that the estimators achieve the minimax optimal convergence rate up to a logarithmic factor over (i) composition structured functions and (ii) $\ell^0$-bounded affine classes. In Section \ref{sec:simulation}, we provide simulation results. Section \ref{sec: conclusion} concludes and discusses possible extensions. Proofs for Section \ref{sec:main} are given in Appendix A. The supplementary material includes a discussion of our main results (Section B), auxiliary lemmas (Section C), proofs for Section 4 (Section D), and technical tools (Section E).

\subsection{Notations}
For any $a,b \in \mathbb{R}$, we write $a \vee b=\max\{a,b\}$ and $a \wedge b=\min\{a,b\}$. For $x \in \mathbb{R}$, $\lfloor x \rfloor$ denotes the largest integer $\leq x$. Given a function $f$ defined on a subset of $\mathbb R^d$ containing $[0,1]^d$, we denote by $f|_{[0,1]^d}$ the restriction of $f$ to $[0,1]^d$. When $f$ is real-valued, we write $\|f\|_{\infty}:=\sup_{x \in [0,1]^{d}}|f(x)|$ for the supremum on the compact set $[0,1]^{d}$. Also, let $\supp(f)$ denote the support of the function $f$. For a vector or matrix $W$, we write $|W|$ for the Frobenius norm (i.e. the Euclidean norm for a vector), $|W|_{\infty}$ for the maximum-entry norm and $|W|_0$ for the number of non-zero entries. For any positive sequences $a_n, b_n$, we write $a_n \lesssim b_n$ if there is a positive constant $C>0$ independent of $n$ such that $a_n \leq Cb_n$ for all $n$, $a_n \asymp b_n$ if $a_n \lesssim b_n$ and $b_n \lesssim a_n$.

\section{Settings}\label{sec:setting}

Let $(\Omega, \mathcal{G}, \{\mathcal{G}_{t}\}_{t\geq 0},\Prob)$ be a filtered probability space. Consider the following nonparametric time series regression model: 
\begin{align}\label{NLAR-model}
Y_t &= m(X_t) + \eta(X_t)v_t,\ t=1,\dots, T, 
\end{align}
where $T \geq 3$, $(Y_t, X_t) \in \mathbb{R} \times \mathbb{R}^{d}$, and $\{X_t, v_t\}_{t=1}^{T}$ is a sequence of random vectors adapted to the filtration $\{\mathcal{G}_t\}_{t=1}^{T}$. We assume $C_\eta := \sup_{x \in [0,1]^{d}}|\eta(x)|<\infty$. In this paper we investigate nonparametric estimation of the mean function $m$ on the compact set $[0,1]^{d}$, that is, $f_0 := m\mathbf{1}_{[0,1]^{d}}$. The model (\ref{NLAR-model}) covers a range of nonlinear time series models.

\begin{example}[Nonlinear AR($p$)-ARCH($q$) model]\label{Ex:ar-arch}
Consider a nonlinear AR model:
\begin{align*}
Y_t &= \tilde m(Y_{t-1},\dots, Y_{t-p}) + (\gamma_0 + \gamma_1Y_{t-1}^2+\dots+\gamma_qY_{t-q}^2)^{1/2}v_t, 
\end{align*}
where $\gamma_0>0$, $\gamma_i \geq 0$, $i=1,\dots,q$ with $1 \leq p,q \leq d$. This example corresponds to the model (\ref{NLAR-model}) with $X_t = (Y_{t-1},\dots, Y_{t-d})'$, $m(x_1,\dots,x_d)=\tilde m(x_1,\dots, m_p)$ and $\eta(x_1,\dots,x_d) =  (\gamma_0 + \gamma_1 x_1^2 + \dots + \gamma_q x_q^2)^{1/2}$. 
\end{example}

\begin{example}[Multivariate nonlinear time series]\label{Ex:multi-ar}
Consider the case that we observe multivariate time series $\{\mathbf Y_t = (Y_{1,t},\dots, Y_{p,t})'\}_{t = 1}^{T}$ and $\{\mathbf X_t = (X_{1,t},\dots, X_{d,t})'\}_{t = 1}^{T}$ such that 
\begin{align}\label{multi-ar-reg}
Y_{j,t} &= m_{j}(\mathbf X_t) + \eta_j(\mathbf X_t)v_{j,t},\ j=1,\dots, p.
\end{align}
The model (\ref{multi-ar-reg}) corresponds to (i) multivariate nonlinear AR model when $\mathbf X_t = (\mathbf Y'_{t-1}, \dots, \mathbf Y'_{t-q})'$ for some $q \geq 1$ and (ii) multivariate nonlinear time series regression with exogenous variables when $\eta_j(\cdot) = 1$ and $\{\mathbf X_t\}_{t=1}^{T}$ is uncorrelated with $\{\mathbf v_t = (v_{1,t},\dots,v_{p,t})'\}_{t=1}^{T}$. If one is interested in estimating the mean function $\mathbf m = (m_1,\dots,m_p)': \mathbb{R}^{d} \to \mathbb{R}^{p}$, then it is enough to estimate each component $m_j$. In this case, the problem of estimating $m_j$ is reduced to that of estimating the mean function $m$ of the model (\ref{NLAR-model}). 
\end{example}

\begin{example}[Time-varying nonlinear models]\label{Ex:tv-model}
Consider a nonlinear time-varying model:
\begin{align}\label{tv-model}
Y_t &= m\left({t \over T}, Y_{t-1},\dots, Y_{t-p}\right) + \eta\left({t \over T}, Y_{t-1},\dots, Y_{t-q}\right)v_t,
\end{align}
where $1 \leq p,q \leq d-1$. This example corresponds to the model (\ref{NLAR-model}) with $X_t = (t/T,Y_{t-1},\dots, Y_{t-d+1})'$ as well as $m$ and $\eta$ regarded as functions on $\mathbb R^d$ in the canonical way. If the random variables $v_t$ are i.i.d., then the model (\ref{tv-model}) corresponds to that considered in \cite{Vo12}. Moreover, the model (\ref{tv-model}) covers, for instance, time-varying AR($p$)-ARCH($q$) models when $m(u,x_1,\dots,x_{p}) = m_0(u)+\sum_{j=1}^{p}m_j(u)x_j$ and $\eta(u,x_1,\dots,x_{q})=(\eta_0(u)+\sum_{j=1}^{q}\eta_j(u)x_j^2)^{1/2}$ with some functions $m_j:[0,1] \to \mathbb{R}$, $\eta_j:[0,1] \to [0,\infty)$. By the same approach as in Example \ref{Ex:multi-ar}, the model (\ref{NLAR-model}) can be applied to multivariate time-varying nonlinear models.
\end{example}


\section{Main results}\label{sec:main}

In this section, we provide generalization error bounds of (i) the non-penalized and (ii) the sparse-penalized DNN estimators. First, we define the $\beta$-mixing coefficients of the possibly nonstationary process $\{X_t\}_{t=1}^{T}$. Recall that the process $\{X_t\}_{t=1}^{T}$ is defined on a filtered probability space $(\Omega, \mathcal{G}, \{\mathcal{G}_{t}\}_{t\geq 0},\Prob)$. Let $\mathcal{A}$ and $\mathcal{B}$ be subfields of $\mathcal{G}$. Define $\beta(\mathcal{A},\mathcal{B})=\sup{1 \over 2}\sum_{i=1}^{I}\sum_{j=1}^{J}|\Prob(A_i \cap B_j)-\Prob(A_i)\Prob(B_j)|$ where the supremum is taken over all pairs of (finite) partitions $\{A_1,\dots,A_I\}$ and $\{B_1,\dots,B_J\}$ of $\Omega$ such that $A_i \in \mathcal{A}$ and $B_j \in \mathcal{B}$. The $\beta$-mixing coefficients of the process $\{X_t\}_{t=1}^{T}$ is defined as $\beta_X(t):=\beta_{X,T}(t)=\sup_{1\leq r \leq T-t}\beta(\sigma(X_s:1\leq s \leq r),\sigma(X_s:r+t\leq s \leq T))$, where $\sigma(Z)$ is the $\sigma$-field generated by $Z$ (cf. \cite{Vo12}).
We assume the following conditions.

\begin{assumption}\label{Ass: model}
\begin{itemize}
\item[(i)] The random variables $v_t$ are conditionally centered and sub-Gaussian, that is, $\Ep[v_t\mid\mathcal G_{t-1}]=0$ and $\Ep[\exp(v_t^2/K_t^2)|\mathcal{G}_{t-1}] \leq 2$ for some constant $K_t>0$. Moreover, $\Ep[v_t^2|\mathcal{G}_{t-1}]=1$. Define $K = \max_{1 \leq t \leq T}K_t$. 
\item[(ii)] The process $X=\{X_t\}_{t = 1}^{T}$ is exponentially $\beta$-mixing, i.e. the $\beta$-mixing coefficient $\beta_X(t)$ of $X$ satisfies $\beta_X(t) \leq C_{1,\beta}\exp(-C_{2,\beta}t)$ with some positive constants $C_{1,\beta}$ and $C_{2,\beta}$ for all $t \geq 1$. 
\item[(iii)] The process $X$ is predictable, that is, $X_t$ is measurable with respect to $\mathcal{G}_{t-1}$. 
\end{itemize}
\end{assumption}

Condition (i) is used to apply exponential inequalities for self-normalized processes presented in \cite{deKlLa04}. Since $\Ep[\Ep[\exp(v_t^2/K_t^2)|\mathcal{G}_{t-1}]]=\Ep[\exp(v_t^2/K_t^2)]$, Condition (i) also implies that each $v_t$ is sub-Gaussian. Condition (ii) is satisfied for a wide class of nonlinear time series. 
Note that the process $X = \{X_t\}_{t = 1}^{T}$ can be nonstationary. When $X_t = (Y_{t-1},\dots,Y_{t-d})'$, \cite{ChCh00} provide a set of sufficient conditions for the process $X$ to be strictly stationary and exponentially $\beta$-mixing (Theorem 1 in \cite{ChCh00}): 
\begin{itemize}
\item[(i)] $\{v_t\}$ is a sequence of i.i.d. random variables and has an everywhere positive and continuous density function, $E[v_t]=0$, and $v_t$ is independent of $X_{t-s}$ for all $s \geq 1$. 
\item[(ii)] The function $m$ is bounded on every bounded set, that is, for every $\Gamma \geq 0$, $\sup_{|x| \leq \Gamma}|m(x)| < \infty$. 
\item[(iii)] The function $\eta$ satisfies, for every $\Gamma \geq 0$, $0<\eta_1 \leq \inf_{|x|\leq \Gamma}\eta(x)\leq \sup_{|x|\leq \Gamma}\eta(x) <\infty$,
where $\eta_1$ is a constant. 
\item[(iv)] There exist constants $c_{m,i} \geq 0$, $c_{\eta,i} \geq 0$ ($i=0,\dots,d$) and $M > 0$ such that $|m(x)| \leq c_{m,0} + \sum_{i=1}^{d}c_{m,i}|x_i|$ and $\eta(x) \leq c_{\eta,0} + \sum_{i=1}^{d}c_{\eta,i}|x_i|$ for $|x| \geq M$, and $\sum_{i=1}^{d}(c_{m,i} + c_{\eta,i}E[|v_1|])<1$.
\end{itemize} 
We also refer to \cite{Tj90}, \cite{BaLe95}, \cite{LuJi01}, \cite{ClPu04} and \cite{Vo12} for other sufficient conditions for the process $X$ being strictly or locally stationary and exponentially $\beta$-mixing. 

\begin{remark}
Although the (exponential) $\beta$-mixing condition does not directly imply any kind of stationarity (as far as we know), for Markov processes it is implied by asymptotic stationarity (cf.~Proposition 3 of \cite{Li05}), and this sufficient condition is practically used to check the $\beta$-mixing condition. 
To our knowledge, locally stationary processes are the only known (non-artificial) examples satisfying the $\beta$-mixing condition without asymptotic stationarity. 
In particular, this condition seems to exclude explosive/unit-root cases, although we do not know any formal result in this direction.
\end{remark}

\subsection{Deep neural networks}

To estimate the mean function $m$ of the model (\ref{NLAR-model}), we fit a deep neural network (DNN) with a nonlinear activation function $\sigma : \mathbb{R} \to \mathbb{R}$. The network architecture $(L,\mathbf{p})$ consists of a positive integer $L$ called the \textit{number of hidden layers} or \textit{depth} and a \textit{width vector} $\mathbf{p}=(p_0,\dots,p_{L+1}) \in \mathbb{N}^{L+2}$. A DNN with network architecture $(L, \mathbf{p})$ is then any function of the form
\begin{align}\label{DNN-func}
f:\mathbb{R}^{p_0} \to \mathbb{R}^{p_{L+1}},\ x \mapsto f(x) = A_{L+1} \circ \sigma_{L} \circ A_{L} \circ \sigma_{L-1}  \circ \dots \circ \sigma_{1} \circ A_{1}(x),
\end{align}
where $A_\ell :\mathbb{R}^{p_{\ell-1}} \to \mathbb{R}^{p_{\ell}}$ is an affine linear map defined by $A_{\ell}(x) := W_{\ell}x + \mathbf{b}_\ell$ for given $p_{\ell-1} \times p_{\ell}$ weight matrix $W_\ell$ and a shift vector $\mathbf{b}_\ell \in \mathbb{R}^{p_\ell}$, and $\sigma_{\ell}: \mathbb{R}^{p_\ell} \to \mathbb{R}^{p_\ell}$ is an element-wise nonlinear activation map defined as $\sigma_{\ell}(z):=(\sigma(z_1),\dots,\sigma(z_{p_{\ell}}))'$. We assume that the activation function $\sigma$ is $C$-Lipschitz for some $C>0$, that is, there exists $C>0$ such that $|\sigma(x_1) - \sigma(x_2)| \leq C|x_1 - x_2|$ for any $x_1,x_2 \in \mathbb{R}$. Examples of $C$-Lipschitz activation functions include the rectified linear unit (ReLU) activation function $x \mapsto \max\{x,0\}$ and the sigmoid activation function $x \mapsto 1/(1+e^{-x})$. For a neural network of the form (\ref{DNN-func}), we define $\theta(f) := (\text{vec}(W_1)',\mathbf{b}'_1,\dots,\text{vec}(W_{L+1})',\mathbf{b}'_{L+1})'$ where $\text{vec}(W)$ transforms the matrix $W$ into the corresponding vector by concatenating the column vectors. 

We let $\mathcal{F}_{\sigma,p_0,p_{L+1}}$ be the class of DNNs which take $p_0$-dimensional input to produce $p_{L+1}$-dimensional output and use the activation function $\sigma:\mathbb{R}\to \mathbb{R}$. Since we are interested in real-valued function on $\mathbb{R}^d$, we always assume that $p_0=d$ and $p_{L+1}=1$ in the following. 

For a given DNN $f$, we let $\text{depth}(f)$ denote the depth and $\text{width}(f)$ denote the width of $f$ (i.e. $\text{width}(f)=\max_{1 \leq \ell \leq L}p_{\ell}$). For positive constants $L, N, B$, and $F$, we set $\mathcal{F}_{\sigma}(L,N,B):=\{f\in \mathcal{F}_{\sigma,d,1}: \text{depth}(f) \leq L, \text{width}(f) \leq N,|\theta(f)|_{\infty}\leq B\}$ and
\ben{\label{DNN-func-class}
\mathcal{F}_{\sigma}(L,N,B,F) :=\left\{f\mathbf{1}_{[0,1]^{d}}:f\in\mathcal{F}_{\sigma}(L,N,B),\|f\|_{\infty} \leq F\right\}.
}
Moreover, we define a class of sparsity constrained DNNs with sparsity level $S>0$ by 
\begin{align}\label{DNN-func-class-sparse}
\mathcal{F}_{\sigma}(L,N,B,F,S) &:= \left\{f \in \mathcal{F}_{\sigma}(L,N,B,F): |\theta(f)|_0 \leq S \right\}. 
\end{align}

\subsection{Non-penalized DNN estimator}\label{subsec:NPDNN}

Let $\hat{f}_{T}$ be an estimator which is a real-valued random function on $\mathbb{R}^{p}$ such that the map $(\omega, x) \mapsto \hat{f}_{T}(\omega,x)$ is measurable with respect to the product of the $\sigma$-field generated by $\{Y_t,X_t\}_{t=1}^{T}$ and the Borel $\sigma$-field of $\mathbb{R}^{d}$. In this section, we provide finite sample properties of a DNN estimator $\hat{f}_T \in \mathcal{F}_{\sigma}(L,N,B,F,S)$ of $f_0$. 

In particular, we provide bounds for the generalization error
\begin{align*}
R(\hat{f}_T,f_0)= \Ep\left[{1 \over T}\sum_{t=1}^{T}(\hat{f}_{T}(X_t^{\ast}) - f_0(X_t^{\ast}))^2\right],
\end{align*}
where $\{X_t^{\ast}\}_{t =1}^{T}$ is an independent copy of $X$. 
\begin{remark}
    When $\{X_t\}_{t\geq1}$ has the common marginal distribution $\Pi$, i.e.~$X_t\sim\Pi$ for all $t\geq1$, we can rewrite $R(\hat{f}_T, f_0)$ as
\[
R(\hat{f}_T, f_0)=\E\left[\int_{\mathbb R^d}|\hat f_T(x)-f_0(x)|^2\Pi(dx)\right],
\]
so it can be interpreted as the $L^2$-risk of the estimator $\hat f_T$ with respect to $\Pi$. 
In this case, we can also relate it to the so-called path-dependent generalization error using the $\beta$-mixing coefficient of $\{X_t,Y_t\}_{t\geq1}$ as follows. First, the path-dependent generalization error is introduced in \cite{KuMo17} and defined as
\[
\widetilde R_{s}(\hat{f}_T, f_0):=\E\left[\bra{\hat f_T(X_{T+s})-f_0(X_{T+s})}^2\mid \{Y_t,X_t\}_{t=1}^T\right],
\]
where $s\geq1$ is a fixed integer. This quantity can be interpreted as an $s$-ahead forecast error of $\hat f_T$ given observation data. 
Now, by Lemma E.1, we can construct a random vector $X_{T+s}^*$ in $\mathbb R^d$ independent of $\{Y_t,X_t\}_{t=1}^T$ such that $X_{T+s}^*\overset{d}{=}X_{T+s}(\sim\Pi)$ and $P(X_{T+s}^*\neq X_{T+s})=\beta_Z(s)$, where $\beta_Z$ is the $\beta$-mixing coefficient of the process $Z_t=(X_t,Y_t)$, $t\geq1$. Then we have
\begin{align*}
\widetilde R_{s}(\hat{f}_T, f_0)
&\leq \E\left[\bra{\hat f_T(X^*_{T+s})-f_0(X^*_{T+s})}^2\mid \{Y_t,X_t\}_{t=1}^T\right]+4F^2\beta_Z(s)\\
&=R(\hat f_T,f_0)+4F^2\beta_Z(s),
\end{align*}
provided that $\|\hat f_T\|_\infty\vee\|f_0\|_\infty\leq F$ for some constant $F$. In particular, if the process $Z_t$ is $\beta$-mixing in the sense that $\beta_Z(s)\to0$ as $s\to\infty$, then a bound for $R(\hat f_T,f_0)$ gives sufficiently far ahead forecast error bounds. 
  
\end{remark}

Let $\mathcal{F}$ be a pointwise measurable class of real-valued functions on $\mathbb{R}^{d}$ (cf. Example 2.3.4 in \cite{vaWe96}). 
Define $\Psi_T^{\mathcal{F}}(\hat{f}_T) := \Ep\left[Q_T(\hat{f}_T) - \inf_{\bar{f} \in \mathcal{F}}Q_T(\bar{f})\right]$ where $Q_T(f)$ is the empirical risk of $f$ defined by $Q_T(f) := {1 \over T}\sum_{t=1}^{T}(Y_t - f(X_t))^2$. The function $\Psi_T^{\mathcal{F}}(\hat{f}_T)$ measures a gap between $\hat{f}_T$ and an exact minimizer of $Q_T(f)$ subject to $f \in \mathcal{F}$. Define
\begin{align*}
\hat{f}_{T,np} \in \argmin_{f \in \mathcal{F}_{\sigma}(L,N,B,F,S)}Q_T(f) 
\end{align*}
and we call $\hat{f}_{T,np}$ as the non-penalized DNN estimator.

The next result gives a generalization error bound of $\hat{f}_{T,np}$.

\begin{theorem}\label{thm: pred-error-bound}
Suppose that Assumption \ref{Ass: model} is satisfied. Consider the nonparametric time series regression model (\ref{NLAR-model}) with unknown regression function $m$ satisfying $\|f_0\|_{\infty} \leq F$ where $f_0 = m\mathbf{1}_{[0,1]^d}$ for some $F \geq 1$. Let $\hat{f}_T$ be any estimator taking values in the class $\mathcal{F}=\mathcal{F}_{\sigma}(L,N,B,F,S)$ with $B \geq 1$. Then for any $\rho >1$, there exists a constant $C_\rho$, only depending on $(C_\eta, C_{1,\beta}, C_{2,\beta}, K, \rho)$, such that
\begin{align*}
R(\hat{f}_T,f_0) &\leq \rho\left(\Psi_T^{\mathcal{F}}(\hat{f}_T) + \inf_{f \in \mathcal{F}}R(f, f_0)\right) + C_\rho F^2{S(L+1)\log \left((L+1)(N+1)BT\right)(\log T) \over T}.
\end{align*}
\end{theorem}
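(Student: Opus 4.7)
The plan is to combine the two abstract prerequisites of Section~3 in a two-step telescoping, and then instantiate the uniform complexity term with the entropy of the DNN class $\mathcal{F}_\sigma(L,N,B,F,S)$. The first step bounds the generalization error by the expected empirical prediction error up to an entropy penalty (Lemma~\ref{lem: error-gap-bound}); the second step bounds the expected empirical prediction error by $\Psi_T^{\mathcal{F}}(\hat f_T)+\inf_{f\in\mathcal{F}}R(f,f_0)$ plus a similar entropy penalty (Lemma~\ref{lem: emp-error-bound}). Once these are chained, the proof reduces to bookkeeping plus a standard covering-number estimate for sparse DNNs.

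Concretely, write $\mathcal{E}_T(f):=\tfrac{1}{T}\sum_{t=1}^{T}(f(X_t)-f_0(X_t))^2$, and fix $\rho_1,\rho_2>1$ with $\rho_1\rho_2=\rho$. Applying Lemma~\ref{lem: error-gap-bound} to $\hat f_T\in\mathcal{F}$ should deliver
$$R(\hat f_T,f_0)\le \rho_1\,\Ep[\mathcal{E}_T(\hat f_T)]+C_{\rho_1}\,F^2\,\frac{\mathcal{C}_T(\mathcal{F})}{T},$$
where $\mathcal{C}_T(\mathcal{F})$ is a uniform metric-entropy functional evaluated at scale $\asymp 1/T$, already incorporating the $\log T$ factor that comes from the $\beta$-mixing block decomposition and the self-normalized martingale inequality used inside that lemma. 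Applying Lemma~\ref{lem: emp-error-bound} and using that $\Ep[\mathcal{E}_T(f)]=R(f,f_0)$ for every deterministic $f$ (because $\{X_t^{*}\}$ has the same law as $\{X_t\}$), one gets an analogous bound
$$\Ep[\mathcal{E}_T(\hat f_T)]\le \rho_2\bigl(\Psi_T^{\mathcal{F}}(\hat f_T)+\inf_{f\in\mathcal{F}}R(f,f_0)\bigr)+C_{\rho_2}\,F^2\,\frac{\mathcal{C}_T(\mathcal{F})}{T},$$
where the infimum appears from the standard trick of comparing $Q_T(\hat f_T)$ with $Q_T(\bar f)$ for arbitrary $\bar f\in\mathcal{F}$ and then taking the infimum over $\bar f$. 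Substituting the second display into the first and absorbing constants into a new $C_\rho$ yields exactly the form of the theorem, with the only remaining task being to upper bound $\mathcal{C}_T(\mathcal{F})$.

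For the DNN class $\mathcal{F}_\sigma(L,N,B,F,S)$ with a $C$-Lipschitz activation, the standard parameter-counting argument (using that each of the $S$ active weights lies in $[-B,B]$ and that output perturbations propagate through $L+1$ Lipschitz layers of width $\le N$, see, e.g., the entropy computations in \cite{Sc20} and \cite{OhKi22}) gives
$$\log\mathcal{N}\bigl(\mathcal{F}_\sigma(L,N,B,F,S),\delta,\|\cdot\|_\infty\bigr)\lesssim S(L+1)\log\!\Bigl(\frac{(L+1)(N+1)B}{\delta}\Bigr).$$
Evaluating at $\delta\asymp 1/T$ turns this into the factor $S(L+1)\log((L+1)(N+1)BT)$ appearing in the theorem, while the extra $\log T$ is the dependence penalty already baked into $\mathcal{C}_T$. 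The main obstacle therefore lies not in the present proof but in establishing Lemma~\ref{lem: error-gap-bound}: controlling the cross term $T^{-1}\sum_t(\hat f_T(X_t)-f_0(X_t))\eta(X_t)v_t$ uniformly over $\mathcal{F}$ when the covariates are only $\beta$-mixing rather than independent requires combining the blocking method with the self-normalized martingale deviation inequality of \cite{deKlLa04} (which is why Assumption~\ref{Ass: model}(i),(iii) are imposed), and it is exactly this device that preserves a single rather than squared logarithmic factor in $T$.
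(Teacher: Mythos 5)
Your proposal is correct and follows essentially the same route as the paper: chain Lemma~\ref{lem: error-gap-bound} (generalization vs.\ empirical error) with Lemma~\ref{lem: emp-error-bound} (empirical error vs.\ $\Psi_T^{\mathcal F}$ plus approximation error), choose $\delta\asymp T^{-1}$ and $a_T\asymp\log T$, and control the entropy via the sparse-DNN covering bound of Lemma~\ref{lem: DNN-cov-number-bound}; your $\rho_1\rho_2=\rho$ bookkeeping corresponds to the paper's choice $(1+\varepsilon)/(1-\varepsilon)=\rho$ with the same $\varepsilon$ in both lemmas, and the identity $\widehat R(\bar f,f_0)=R(\bar f,f_0)$ for deterministic $\bar f$ is used exactly as you note.
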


Theorem \ref{thm: pred-error-bound} is an extension of Theorem 2 in \cite{Sc20} to possibly nonstationary $\beta$-mixing sequence and the process $\{v_t\}$ can be non Gaussian and dependent. The result follows from Lemmas C.1 and C.2 in the supplementary material. Note that these Lemmas are of independent interest since they are general results so that the estimator $\hat{f}_{T}$ do not need to take values in $\mathcal{F}_{\sigma}(L,N,B,F,S)$. Hence the results would be useful to investigate generalization error bounds of other nonparametric estimators.

Let $f_0=m\mathbf{1}_{[0,1]^d}$ belong to composition structured functions $\mcl F_0 = \mcl G\big(q, \mathbf d, \mathbf t, \bol\beta, A\big)$ for example (see Section \ref{subsec:comp} for the definition). By choosing $\sigma(x)=\max\{x,0\}$ and the parameters of $\mathcal{F}_{\sigma}(L_T,N_T,B_T,F,S_T)$ as $L_T\asymp \log T$, $N_T\asymp T$, $B_T \geq 1$, $F > \|f_0\|_\infty$, and $S_T\asymp T^{\kappa/(\kappa+1)}\log T$ with $\kappa$ depending on $\mathbf t$ and $\bol\beta$, one can show that the non-penalized DNN estimator achieves the minimax convergence rate over $\mathcal{F}_0$ up to a logarithmic factor. However, the sparsity level $S_T$ depends on the characteristics $\mathbf t$ and $\bol\beta$ of $f_0$. Therefore, the non-penalized DNN estimator is not adaptive since we do not know the characteristics in practice. In the next subsection, we provide a generalization error bound of sparse-penalized DNN estimators which plays an important role to show that the sparse-penalized DNN estimators can estimate $f_0$ adaptively.

\begin{remark}[Generalization error bound under $\beta$-mixing coefficients with polynomial decay]
We can also give a generalization error bound of the non-penalized DNN estimator $\hat{f}_{T,np}$ under $\beta$-mixing coefficients with polynomial decay. Instead of Assuption \ref{Ass: model} (ii), assume that $\beta_{X}(t) \leq C_\beta t^{-\alpha}$ for some $\alpha>0$. Then under the same assumptions in Theorem \ref{thm: pred-error-bound}, we have
\begin{align}
R(\widehat{f}_T,f_0) &\leq \rho\left(\Psi^{\mathcal{F}}(\widehat{f}_T) + \inf_{f \in \mathcal{F}}R(f, f_0)\right) +  \overline{C}F^2\left({(\log T)(\log \mathcal{N}_T) \over T} + \left({\log \mathcal{N}_T \over T}\right)^{{\alpha \over \alpha +1}}\right), \label{beta-poly-bound}
\end{align}
where $\mathcal{N}_T$ depends on the $T^{-1}$-covering number of $\mathcal{F}_\sigma(L,N,B,F,S)$ with respect to $\|\cdot\|_\infty$ and we can see that $\log \mathcal{N}_T \leq c_1S(L+1)\log \left((L+1)(N+1)BT\right)$ (see Appendix A and Section E of the supplementary material for the detailed definition the covering number and the bound of $\log \mathcal{N}_T$), respectively. Here, $c_1$ is a universal constant and $\overline{C}$ is a constant depending only on $(C_\eta, C_\beta, K, \rho)$. A similar generalization error bound can be derived for the sparse-penalized DNN, which is defined in the next subsection (see Remarks A.1 and A.2 in Appendix A for details). We speculate the bound \eqref{beta-poly-bound} would be suboptimal (at least) when $\{X_t\}_{t=1}^T$ and $\{v_t\}_{t=1}^T$ are independent. 
On this point, we refer to \cite{KuLo11} that investigates the convergence rate of the Nadaraya--Watson estimator for nonparametric time series regression models with serially correlated covariates and errors. See Section B of the supplementary material for a more detailed discussion on this point.
\end{remark}

\subsection{Sparse-penalized DNN estimator}\label{subsec:pdnn}

Define $\bar{Q}_T(f)$ as a penalized version of the empirical risk $\bar{Q}_T(f) := {1 \over T}\sum_{t=1}^{T}(Y_t - f(X_t))^2 + J_T(f)$ where $J_T(f)$ is the a sparse penalty of the form $J_T(f)=\sum_{j=1}^p\pi_{\lambda_T,\tau_T}(\theta_j(f))$ with a function $\pi_{\lambda_T,\tau_T}:\mathbb R\to[0,\infty)$ having two tuning parameters $\lambda_T>0$ and $\tau_T>0$. 
Here, $p$ is the length of the vector $\theta(f)$ and $\theta_j(f)$ denotes the $j$-th component of $\theta(f)$. 
We assume that $\pi_{\lambda_T,\tau_T}$ satisfies the following conditions:
\begin{enumerate}[label=(\roman*)]
    \item $\pi_{\lambda_T,\tau_T}(0)=0$ and $\pi_{\lambda_T,\tau_T}(\theta)$ is non-decreasing in $|\theta|$.  
    \item $\pi_{\lambda_T,\tau_T}(\theta)=\lambda_T$ if $|\theta|>\tau_T$.
\end{enumerate}
A prominent example is the clipped $L_1$ penalty of \cite{Zh10} which is given by 
\begin{equation}\label{eq:clip}
    \pi_{\lambda_T,\tau_T}(\theta)=\lambda_T\left({|\theta| \over \tau_T} \wedge 1\right).
\end{equation}
This choice is used in \cite{OhKi22}. Other possible choices are the SCAD penalty \citep{FaLi01}, the minimax concave penalty \citep{Zh10mcp} and the seamless $L_0$ penalty \citep{DHL13}.  
In this section, we provide finite sample properties of the sparse-penalized DNN estimator defined as 
\begin{align*}
\hat{f}_{T, sp} \in \argmin_{f \in \mathcal{F}_{\sigma}(L,N,B,F)}\bar{Q}_T(f). 
\end{align*}
Further, for any estimator $\hat{f}_{T} \in \mathcal{F}=\mathcal{F}_{\sigma}(L,N,B,F)$ of $f_0$, we define \\$\bar{\Psi}_T^{\mathcal{F}}(\hat{f}_T) := \Ep\left[\bar{Q}_T(\hat{f}_T) - \inf_{\bar{f} \in \mathcal{F}}\bar{Q}_T(\bar{f})\right]$.

The next result provides a generalization error bound of the sparse-penalized DNN estimator. 

\begin{theorem}\label{thm: pred-error-bound2}
Suppose that Assumption \ref{Ass: model} is satisfied. Consider the nonparametric time series regression model (\ref{NLAR-model}) with unknown regression function $m$ satisfying $\|f_0\|_{\infty} \leq F$ where $f_0 = m\mathbf{1}_{[0,1]^d}$ for some $F \geq 1$. Let $\hat{f}_T$ be any estimator taking values in the class $\mathcal{F}=\mathcal{F}_{\sigma}(L_T,N_T,B_T,F)$ where $L_T, N_T$, and $B_T$ are positive values such that $L_T \leq C_L\log^{\nu_0} T$, $N_T \leq C_N T^{\nu_1}$, $1 \leq B_T \leq C_B T^{\nu_2}$ for some positive constants $C_L, C_N, C_B, \nu_0,\nu_1$, and $\nu_2$. Moreover, we assume that the tuning parameters $\lambda_T$ and $\tau_T$ of the sparse penalty function $J_T(f)$ satisfy $\lambda_T = (F^2\iota_{\lambda}(T)\log^{2+\nu_0} T)/T$ with a strictly increasing function $\iota_{\lambda}(x)$ such that $\iota_{\lambda}(x)/\log x \to \infty$ as $x \to \infty$ and $\tau_T(L_T+1)((N_T+1)B_T)^{L_T+1}\leq C_\tau T^{-1}$ with some positive constant $C_\tau$ for any $T$. Then, 
\begin{align*}
R(\hat{f}_T,f_0) &\leq 6\left(\bar{\Psi}_T^{\mathcal{F}}(\hat{f}_T) + \inf_{f \in \mathcal{F}}\left(R(f,f_0) + J_T(f)\right)\right) + CF^2 \left({1+ \log T \over T}\right),
\end{align*}
where $C$ is a positive constant only depending on $(C_\eta, C_{1,\beta}, C_{2,\beta}, C_L, C_N, C_B, C_\tau,\nu_0,\nu_1, \nu_2, K, \iota_\lambda)$.
\end{theorem}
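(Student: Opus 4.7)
The plan is to adapt the strategy behind Theorem~\ref{thm: pred-error-bound}, with the clipped $L_1$ penalty $J_T$ replacing the hard sparsity constraint used there.

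The first ingredient is a trimming reduction. For any $g\in\mathcal{F}_\sigma(L_T,N_T,B_T,F)$, let $\tilde g$ be the network obtained by setting every weight $\theta_j$ with $|\theta_j|\le\tau_T$ to $0$. Since each surviving entry contributes $1$ to $\|\theta(g)\|_{\text{clip},\tau_T}$, we have $|\theta(\tilde g)|_0\le\|\theta(g)\|_{\text{clip},\tau_T}=J_T(g)/\lambda_T$. A standard Lipschitz-in-parameters estimate for DNNs yields $\|g-\tilde g\|_\infty\le\tau_T(L_T+1)((N_T+1)B_T)^{L_T+1}\le C_\tau/T$. Hence, up to a uniform $O(1/T)$ function-level shift, both the trimmed estimator $\tilde{\hat f}_T$ and any trimmed candidate $\tilde f$ lie in the sparse class $\mathcal{F}_\sigma(L_T,N_T,B_T,F+C_\tau/T,S)$ with $S\le J_T(\cdot)/\lambda_T$.

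The second ingredient is the generalization bound for sparse classes at each fixed level $S$. Invoking Lemmas~\ref{lem: error-gap-bound} and~\ref{lem: emp-error-bound} (the $\beta$-mixing blocking together with the self-normalized exponential inequality of \cite{deKlLa04}) as in the proof of Theorem~\ref{thm: pred-error-bound} gives, at sparsity $S$, a complexity contribution of order $F^2 S(L_T+1)\log((L_T+1)(N_T+1)B_T T)\log T/T$. Under the assumed growth of $L_T,N_T,B_T$ this is $O\bigl(F^2 S\log^{2+\nu_0}T\cdot\log T/T\bigr)$, which is strictly dominated by $S\lambda_T$ because $\iota_\lambda(T)/\log T\to\infty$. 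A peeling argument over the integer sparsity levels $S\in\{0,1,\dots,S_{\max}\}$ (with $S_{\max}$ polynomial in $T$, the total parameter count) converts this into $S\lambda_T$ plus an additive residual of order $F^2\log T/T$; the additional logarithm from the union is absorbed by $\iota_\lambda$. Applying this at $S=\lceil J_T(\hat f_T)/\lambda_T\rceil$ and using $\lambda_T\lceil J_T(g)/\lambda_T\rceil\le J_T(g)+\lambda_T$ yields, for every $f\in\mathcal{F}$,
\[
R(\hat f_T,f_0)\le \rho\bigl(\Ep[Q_T(\hat f_T)-Q_T(f)]+R(f,f_0)+J_T(\hat f_T)+J_T(f)\bigr)+CF^2\frac{1+\log T}{T}.
\]
Folding the $J_T(\hat f_T)$ term into the gap via $\Ep[Q_T(\hat f_T)-Q_T(f)]+J_T(\hat f_T)-J_T(f)=\Ep[\bar Q_T(\hat f_T)-\bar Q_T(f)]\le\bar\Psi_T^{\mathcal{F}}(\hat f_T)$, and choosing $\rho$ so that the doubled penalty and the doubled approximation term combine to the factor $6$, produces the stated bound.

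The hardest step will be the peeling over the random sparsity level. Because $|\theta(\tilde{\hat f}_T)|_0$ is data-dependent, the empirical-process inequality in Lemma~\ref{lem: emp-error-bound} must be applied uniformly in $S$ while retaining sub-exponential control of the cross term $T^{-1}\sum_t(\hat f_T-f_0)(X_t)\eta(X_t)v_t$; this is where the $\beta$-mixing blocks and the self-normalized martingale inequality do the real work. Keeping the union cost at an additive $\log T$ rather than a multiplicative $\log^2 T$ (as in \cite{OhKi22}) is precisely what forces the strict domination $\iota_\lambda(T)/\log T\to\infty$, and tracking the constants through all four additive contributions---the trimming error, the covering term, the blocking error, and the doubled penalty---so as to land exactly on the leading factor $6$ will require careful bookkeeping.
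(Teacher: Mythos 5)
Your proposal captures the right ingredients---$\beta$-mixing blocking, the self-normalized martingale inequality, a covering/complexity bound at the clipped-norm level, and a peeling over a data-dependent penalty level---but it is a plan rather than a proof, and its bookkeeping diverges from what the paper actually does in ways that matter.

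The most substantive structural difference: the paper does \emph{not} trim networks to hard sparsity and then peel over integer sparsity levels $S$. Instead it peels over dyadic shells of the penalty \emph{value}, setting $\mathcal{F}_{T,k,s}=\{f:2^{k-1}s\mathbf1_{\{k\neq0\}}\leq J_T(f)<2^ks\}$ for a carefully chosen $s\asymp F^2\log T/T$, and bounds the covering number of each shell directly via the clipped-norm covering lemma (Lemma~\ref{lem: DNN-cov-number-bound2}), with $\log\mathcal N_k\lesssim(2^ks/\lambda_T)\log^{1+\nu_0}T$. The scale $s\ll\lambda_T$ is essential: it is what makes the final additive residual $O(F^2(1+\log T)/T)$ rather than $O(\lambda_T)$. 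Your trimming route is morally equivalent (the trimming inequality $|\theta(\tilde g)|_0\le\|\theta(g)\|_{\mathrm{clip},\tau_T}$ is indeed what underlies that covering lemma), and your observation that the complexity cost at the random sparsity $\hat S$ is $\lesssim \hat S\lambda_T\leq J_T(\hat f_T)$ is correct; but the finer $s$-scale is why the paper's residual is tighter than what a naive integer-$S$ rounding ($+\lambda_T$) would suggest, so the grid you peel over has to be at the $s$-scale, not the $\lambda_T$-scale, to land on the stated residual.

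The deeper gap is in what you call the hardest step. You propose to ``invoke Lemmas~\ref{lem: error-gap-bound} and~\ref{lem: emp-error-bound}'' per sparsity level, but those lemmas produce expectation bounds for a fixed class and cannot simply be unioned over a random shell index: both sides are deterministic. The paper instead re-derives the whole blocking and self-normalized machinery \emph{with the shell decomposition baked in} (Steps 1--3 of the proof), restricting all expectations to the events $\Omega_{T,k,s}=\{\hat f_T\in\mathcal{F}_{T,k,s}\}$, proving a per-shell $\exp$-moment bound $\E[\exp(B_k^2)]\leq2^{1/4}\sqrt{\mathcal N_k}$, and then summing a geometric series in $k$ after subtracting a fraction of $J_T(\hat f_T)$ in each shell. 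The $J_T(\hat f_T)$ term is eliminated by being split into pieces ($\tfrac14$ twice in Step~2, $\tfrac56$ in Step~3) that each absorb one of the self-normalized contributions, and the role of the condition $\iota_\lambda(T)/\log T\to\infty$ is precisely to ensure $(C_1\log^{1+\nu_0}T)/\lambda_T\leq T/(128F^2a_T)$ so that $\sqrt{\mathcal N_k}$ is dominated by the exponential penalty $\exp(-2^ksT/(cF^2a_T))$. Your remark that ``the additional logarithm from the union is absorbed by $\iota_\lambda$'' misattributes the role of $\iota_\lambda$: the union over shells costs only a geometric series that is absorbed into the $O(F^2\log T/T)$ residual; $\iota_\lambda$ is what makes the per-shell complexity dominated by the penalty.

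So: the proposal identifies the right tools and correctly diagnoses the crux, but it does not actually carry out the shell-restricted self-normalized argument, the intermediate inequality you write down (with both $J_T(\hat f_T)$ and $J_T(f)$ appearing with coefficient $\rho$) is asserted rather than derived, and the integer-$S$ peeling as written would not recover the stated $O(F^2(1+\log T)/T)$ residual without switching to a finer grid.
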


Theorem \ref{thm: pred-error-bound2} is an extension of Theorem 1 in \cite{OhKi22}, which considers i.i.d.~observations. Here we explain some differences between their result and ours.  First, our conditions on the penalty function cover the clipped $L_1$ penalty, which is considered in \cite{OhKi22}, as a special case. Second, Theorem \ref{thm: pred-error-bound2} can be applied to nonstationary time series since we only assume the process $X$ to be $\beta$-mixing. Third, our approach to proving Theorem \ref{thm: pred-error-bound2} is different from that of \cite{OhKi22}. Their proofs heavily depend on the theory for i.i.d.~data in \cite{GKKW02} so extending their approach to our framework seems to require substantial work. In contrast, our approach is based on other technical tools such as the blocking technique of $\beta$-mixing processes in \cite{Ri13} and exponential inequalities for self-normalized martingale difference sequences. In particular, considering continuous time embedding of a martingale difference sequence and applying the results on (super-) martingales in \cite{BJY86}, we can allow the process $\{v_t\}_{t=1}^{T}$ to be conditionally centered and circumvent additional conditions on its distribution such as conditional Gaussianity or symmetry (see also Lemma E.2 and the proof of Lemma E.3 in the supplementary material). As a result, our result improves the power of logs in their generalization error bound. Fourth, (a) the upper bound of the depth of the sparse penalized DNN estimator $L$ can grow by a power of $\log T$ and (b) we take the tuning parameter $\lambda_T$ to depend on $F^2$. Particularly, (a) enables us to estimate $f_0$ adaptively when $f_0$ belongs to an $\ell^0$-bounded affine class as well as composition structured functions (see Sections \ref{subsec:comp} and \ref{subsec:ell0-bound} for details) and (b) enables $\hat{f}_{T,sp}$ to be adaptive with respect to $\|f\|_{\infty} \leq F$. See also the comments on Proposition \ref{prop:pdnn} on the improvement of the upper bound.

\section{Minimax optimality in nonlinear AR models}\label{sec:ar-model}

In this section, we show the minimax optimality of the sparse-penalized DNN estimator $\hat{f}_{T,sp}$. In particular, we show that $\hat{f}_{T,sp}$ achieves the minimax convergence rate over (i) composition structured functions and (ii) $\ell^0$-bounded affine class. We note that these classes of functions include many nonlinear AR models such as (generalized) additive AR models, single-index models, (multi-regime) threshold AR models, and exponential AR models. 

We consider the observation $\{Y_t\}_{t=1}^T$ generated by the following nonlinear AR($d$) model: 
\ben{\label{ar-model}
\left\{\begin{array}{l}
Y_t=m(Y_{t-1},\dots,Y_{t-d})+v_t,\qquad t=1,\dots,T,\\
v_t\overset{i.i.d.}{\sim}N(0,1), \qquad (Y_0,Y_{-1},\dots,Y_{-d+1})'\sim\nu.
\end{array}\right.
}
Here, $\nu$ is a (fixed) probability measure on $\mathbb R^d$ such that $\int_{\mathbb R^d}|x|\nu(dx)<\infty$, and $m:\mathbb R^d\to\mathbb R$ is an unknown function to be estimated. 
\begin{remark}
    Note that the process $\{Y_t\}_{t=1}^T$ is possibly non-stationary because the initial distribution $\nu$ is not necessarily the stationary distribution. However, below we impose conditions to ensure ergodicity of the process, so we require the model to be asymptotically stationary in this sense. This is necessary because we need an error bound \textit{uniformly valid} over a class of mean functions to establish (near) minimax optimality. A major difficulty to obtain such a bound is establishing a uniform upper bound for $\beta$-mixing coefficients because it is unclear how most available bounds depend on model parameters. For this purpose we prove Lemma \ref{lemma:beta} using the technique developed in \cite{HaMa11}. This is the place where we need assumptions to ensure ergodicity. 
    By contrast, the setup in the previous sections allows for some asymptotically non-stationary models like locally stationary processes (cf.~Example \ref{Ex:tv-model}). 
    We conjecture that a uniform $\beta$-mixing bound could be established for locally stationary processes under appropriate uniform versions of assumptions in \cite[Theorem 3.4]{Vo12}, but this requires us to carefully examine the entire proof of this result. This is beyond the scope of the paper and we leave it to future research.
\end{remark}


Let $\mathbf c=(c_0,c_1,\dots,c_d)\in(0,\infty)^{d+1}$ satisfy $\sum_{i=1}^dc_i<1$. 
We denote by $\mcl M_0(\mathbf c)$ the set of measurable functions $m:\mathbb R^d\to\mathbb R$ satisfying $|m(x)|\leq c_0+\sum_{i=1}^dc_i|x_i|$ for all $x\in\mathbb R^d$. The following lemma shows that the process $Y=\{Y_t\}_{t=1}^T$ is exponentially $\beta$-mixing ``uniformly'' over $m\in\mcl M_0(\mathbf c)$.
\begin{lemma}\label{lemma:beta}
Consider the nonlinear AR($d$) model \eqref{ar-model} with $m\in\mathcal M_0(\mathbf c)$.  Let $\beta_Y(t)$ be the $\beta$-mixing coefficient of $Y$. 
There are positive constants $C_\beta$ and $C_\beta'$ depending only on $\mathbf c, d$ and $\nu$ such that
\begin{equation}\label{eq:minimax-beta}
\beta_Y(t)\leq C'_\beta e^{-C_\beta t}\qquad\text{for all }t\geq1.
\end{equation}
\end{lemma}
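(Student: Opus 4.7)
The plan is to view $X_t := (Y_t, Y_{t-1}, \ldots, Y_{t-d+1})'$ as a Markov chain on $\R^d$, establish its geometric ergodicity with constants depending only on $(\mathbf c, d)$, and translate this into a bound on $\beta_Y$. Since $\sigma(Y_s : s \leq t) \subseteq \sigma(X_s : s \leq t)$, we have $\beta_Y(t) \leq \beta_X(t)$, so it suffices to control the mixing of $X$. The binding constraint throughout is that every constant introduced must depend only on $(\mathbf c, d, \nu)$, and not on the particular $m \in \mcl M_0(\mathbf c)$.

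For the drift, I would take the weighted $\ell^1$ Lyapunov function $V(x) := 1 + \sum_{i=1}^d a_i |x_i|$ with positive weights to be chosen. Using $|m(x)| \leq c_0 + \sum_j c_j |x_j|$ and $E|v_1| = \sqrt{2/\pi}$, a direct computation gives
\ba{
E[V(X_{t+1}) \mid X_t = x] \leq 1 + a_1(c_0 + E|v_1|) + \sum_{j=1}^{d-1}(a_1 c_j + a_{j+1})|x_j| + a_1 c_d |x_d|.
}
Setting $a_1 = 1$ and $a_{j+1} = \gamma a_j - c_j$, at $\gamma = 1$ the recursion gives $a_j = 1 - \sum_{k<j} c_k > 0$ and $c_d < a_d$ by $\sum_k c_k < 1$; by continuity, for $\gamma \in (0,1)$ sufficiently close to $1$ (depending only on $\mathbf c$) all weights remain positive and $c_d \leq \gamma a_d$. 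This delivers the drift inequality $(PV)(x) \leq \gamma V(x) + b$ with $b = (1 - \gamma) + c_0 + E|v_1|$.

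For the minorization, conditional on $X_t = x$ the map $(v_{t+1}, \ldots, v_{t+d}) \mapsto X_{t+d}$ is a triangular change of variables with unit Jacobian, so the $d$-step transition admits a Lebesgue density that factors as a product of $d$ standard normal densities evaluated at arguments of the form (coordinate of $y$) $- m(\cdot)$. On any compact set $C \subset \R^d$, the arguments of $m$ lie in a fixed bounded set, and the bound $|m| \leq c_0 + \sum c_j |\cdot|$ forces each Gaussian factor to admit a positive lower bound depending only on $(\mathbf c, d, \text{diam}(C))$, uniformly over $m \in \mcl M_0(\mathbf c)$. Consequently every compact set is a small set for the $d$-skeleton with minorization constants uniform in $m$, which also ensures $\psi$-irreducibility and aperiodicity.

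Combining the drift and the small-set condition via the Foster--Lyapunov theorem (e.g., Theorem 15.0.1 of Meyn and Tweedie) yields $V$-uniform ergodicity: there exist $R > 0$ and $\rho \in (0,1)$, depending only on $(\mathbf c, d)$, with $\|P^t(x, \cdot) - \pi\|_{\mathrm{TV}} \leq R\,V(x)\,\rho^t$. Iterating the drift gives $\sup_t E V(X_t) \leq E V(X_0) + b/(1 - \gamma) < \infty$ because $\int |x|\,\nu(dx) < \infty$, and the standard Markovian bound $\beta_X(t) \leq 2 \sup_s E \|P^t(X_s, \cdot) - \mathrm{Law}(X_{s+t})\|_{\mathrm{TV}}$ then produces \eqref{eq:minimax-beta} with constants depending only on $(\mathbf c, d, \nu)$. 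The principal obstacle is not any single step in isolation but the pervasive need for uniformity over $\mcl M_0(\mathbf c)$; it is precisely the $\ell^1$-growth bound on $m$ that makes this tractable, simultaneously controlling the Lyapunov drift and providing a uniform compact-set lower bound on the transition density without reference to the specific $m$.
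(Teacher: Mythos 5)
Your proposal follows essentially the same route as the paper's proof: rewrite the AR($d$) model as a Markov chain, verify a geometric drift condition with a weighted $\ell^1$ Lyapunov function $V(x)=1+\sum_i a_i|x_i|$ (the paper uses $V(x)=\sum_i b_i|x_i|$ with the same recursion for the weights, exploiting $\sum_i c_i<1$), obtain a uniform minorization for the $d$-step kernel by lower-bounding the Gaussian transition density on a compact set, and then invoke a Harris-type theorem together with Davydov's identity to pass from total-variation contraction to exponential $\beta$-mixing. The only substantive difference is the cited ergodicity theorem: the paper applies Theorem~1.3 of Hairer and Mattingly (2011), which is a \emph{quantitative} Harris theorem giving the contraction rate and constant as explicit functions of the drift and minorization parameters, whereas you cite Theorem~15.0.1 of Meyn and Tweedie, which as classically stated asserts existence of $(R,\rho)$ without tracking how they depend on the drift and minorization inputs. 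Since the whole point of the lemma is uniformity over $m\in\mathcal M_0(\mathbf c)$, you really do need a quantitative version (Hairer--Mattingly, or Rosenthal~1995, or Baxendale~2005) at this step; your drift and minorization constants are indeed uniform, so the argument goes through once you swap in such a reference, but the Meyn--Tweedie citation alone does not close that gap.
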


The next result gives a generalization error bound of DNN estimators for a family of functions that can be approximated with a certain degree of accuracy by DNNs.

\begin{proposition}\label{prop:pdnn}
Consider the nonlinear AR($d$) model \eqref{ar-model} with $m\in\mathcal M_0(\mathbf c)$. 
Let $F$, $\hat f_T$, $\mcl F$, $L_T$, $N_T$, $B_T$, $\lambda_T$ and $\tau_T$ as in Theorem \ref{thm: pred-error-bound2}. 
Suppose that there are constants $\kappa,r\geq0$, $C_0>0$ and $C_S>0$ such that $\inf_{f\in\mathcal{F}_{\sigma}(L_T,N_T,B_T,F,S_{T})}\|f-f_0\|_{L^2([0,1]^d)}^2\leq C_0T^{-1/(\kappa+1)}$ with $S_{T}:=C_ST^{\kappa/(\kappa+1)}\log^r T$. Then,
\[
R(\hat{f}_T,f_0)\leq 6\bar{\Psi}_T^{\mathcal{F}}(\hat{f}_T)+C'F^2\frac{\iota_\lambda(T)\log^{2+\nu_0+r} T}{T^{1/(\kappa+1)}},
\]
where $C'$ is a positive constant only depending on $(\! \mathbf c, d, \nu, C_L, C_N, C_B,C_\tau, \nu_0,\nu_1, \nu_2, K, \iota_\lambda,\kappa,r,C_0,C_S\!)$.
\end{proposition}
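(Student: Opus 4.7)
The plan is to invoke Theorem~\ref{thm: pred-error-bound2} and then estimate the oracle term $\inf_{f\in\mcl F}(R(f,f_0)+J_T(f))$ using the prescribed sparse DNN approximation. First I would verify that Assumption~\ref{Ass: model} holds under the AR($d$) model \eqref{ar-model}: (i) holds because $v_t\sim N(0,1)$ is sub-Gaussian with unit conditional variance; (ii) holds by Lemma~\ref{lemma:beta}, since $m\in\mcl M_0(\mathbf c)$ gives exponential $\beta$-mixing of $Y$ and hence of $X_t=(Y_{t-1},\dots,Y_{t-d})'$; and (iii) holds because $X_t$ is $\mcl G_{t-1}$-measurable. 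Theorem~\ref{thm: pred-error-bound2} then yields
\begin{equation*}
R(\hat f_T,f_0)\leq 6\bar\Psi_T^{\mcl F}(\hat f_T)+6\inf_{f\in\mcl F}\bigl(R(f,f_0)+J_T(f)\bigr)+CF^2\frac{1+\log T}{T},
\end{equation*}
and the residual $(1+\log T)/T$ will be absorbed into the claimed bound since $\kappa\geq 0$ and $\iota_\lambda(T)/\log T\to\infty$.

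Next, I pick $f^{\ast}\in\mcl F_\sigma(L_T,N_T,B_T,F,S_T)$ with $\|f^{\ast}-f_0\|_{L^2([0,1]^d)}^2\leq 2C_0/T^{1/(\kappa+1)}$, which exists by hypothesis. Because every summand in the clipped $L_1$ norm is at most $1$, we have $\|\theta(f^{\ast})\|_{\mathrm{clip},\tau_T}\leq|\theta(f^{\ast})|_0\leq S_T$, so
\begin{equation*}
J_T(f^{\ast})\leq\lambda_T S_T=C_SF^2\,\frac{\iota_\lambda(T)\log^{2+\nu_0+r}T}{T^{1/(\kappa+1)}},
\end{equation*}
which already matches the target rate stated in the proposition.

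The main obstacle is converting the $L^2([0,1]^d)$ approximation error into the generalization error $R(f^{\ast},f_0)$, since the law of $X_t^{\ast}$ need not admit a density for small $t$ (recall that $X_1^{\ast}$ has components drawn from the initial measure $\nu$, which may be singular). The key observation is that for every $t\geq d+1$, conditional on $\mcl G_0$, the joint density of $(Y_{t-d}^{\ast},\dots,Y_{t-1}^{\ast})$ factorizes as $\prod_{j=1}^{d}\phi(\cdot - m(\cdot))$ by Gaussianity of $v_t$, with each Gaussian factor bounded by $(2\pi)^{-1/2}$; integrating against $\nu$ preserves the uniform upper bound $(2\pi)^{-d/2}$ on the marginal density of $X_t^{\ast}$ on $\mathbb R^d$. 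Splitting the sum defining $R(f^{\ast},f_0)$ at $t=d+1$ and using the trivial bound $\|f^{\ast}-f_0\|_\infty\leq 2F$ for the first $d$ terms then yields
\begin{equation*}
R(f^{\ast},f_0)\leq\frac{4dF^2}{T}+(2\pi)^{-d/2}\|f^{\ast}-f_0\|_{L^2([0,1]^d)}^2\leq\frac{4dF^2}{T}+\frac{2C_0(2\pi)^{-d/2}}{T^{1/(\kappa+1)}}.
\end{equation*}
Combining this with the bound on $J_T(f^{\ast})$ and absorbing all lower-order $O(F^2\log T/T)$ contributions into the dominant term gives the stated inequality with a constant $C'$ depending only on $(\mathbf c,d,\nu,C_L,C_N,C_B,C_\tau,\nu_0,\nu_1,\nu_2,K,\iota_\lambda,\kappa,r,C_0,C_S)$.
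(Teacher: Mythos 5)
Your proof is correct and follows essentially the same route as the paper: invoke Theorem~\ref{thm: pred-error-bound2} (with the mixing assumption supplied by Lemma~\ref{lemma:beta}), bound the penalty by $J_T(f)\leq\lambda_T|\theta(f)|_0\leq\lambda_T S_T$, and convert the $L^2$-approximation error to $R(f,f_0)$ via the fact that the density of $X_t^\ast$ is uniformly bounded for $t>d$ plus a trivial $O(F^2 d/T)$ bound on the first $d$ terms. The only slight imprecision is that the factorization of the density of $(Y_{t-d}^\ast,\dots,Y_{t-1}^\ast)$ holds conditionally on $\mathcal G_{t-d-1}$ (or equivalently on $(Y_{t-d-1}^\ast,\dots,Y_{t-2d}^\ast)$), not on $\mathcal G_0$, and one then integrates against the law of that past rather than against $\nu$; the bound survives marginalization in any case, so the argument stands.
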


If $\hat{f}_T = \hat{f}_{T,sp}$, then the generalization error bound in Proposition \ref{prop:pdnn} is reduced to $R(\hat{f}_T,f_0) \leq C'F^2\iota_\lambda(T)(\log^{2+\nu_0+r} T)T^{-1/(\kappa+1)}$. 
When $\nu_0=1$ and $\iota_\lambda(T)=\log^{\nu_3} T$ with $\nu_3 \in (1,2)$, one can see that our result improves the power of logs in the generalization error bound in Theorem 2 in \cite{OhKi22}. Moreover, our result allows the generalization error bound to depend explicitly on $F$. Combining this with the results in the following sections implies that the sparse-penalized DNN estimator can be adaptive concerning the upper bound of $\|f_0\|_{\infty}$ (by taking $F \asymp \log^{\nu_4}T$ with $\nu_4>0$ for example) and hence Proposition \ref{prop:pdnn} is useful for the computation of $\hat{f}_{T,sp}$ since the upper bound $F$ is unknown in practice as well as other information about the shape of $f_0$.

\subsection{Composition structured functions}\label{subsec:comp}

In this subsection, we consider nonparametric estimation of the mean function $f_0$ when it belongs to a class of composition structured functions which is defined as follows (cf. \cite{Sc20}).

For $p, r\in\mathbb N$ with $p\geq r$, $\beta,A>0$ and $l<u$, we denote by $C_r^\beta([l,u]^p, A)$ the set of functions $f:[l,u]^p\to\mathbb R$ satisfying the following conditions:
\begin{enumerate}[label=(\roman*)]

\item $f$ depends on at most $r$ coordinates.

\item $f$ is of class $C^{\lfloor\beta\rfloor}$ and satisfies
\[
\sum_{\bol\alpha : |\bol\alpha|_1 < \beta}\|\partial^{\bol\alpha} f\|_\infty + \sum_{\bol\alpha : |\bol\alpha |_1= \lfloor \beta \rfloor } \, \sup_{x, y \in [l,u]^p:x \neq y}
  \frac{|\partial^{\bol\alpha} f(x) - \partial^{\bol\alpha} f(y)|}{|x-y|_\infty^{\beta-\lfloor \beta \rfloor}} \leq A,
\]
where we used multi-index notation, that is, $\partial^{\bol\alpha}= \partial^{\alpha_1}\cdots \partial^{\alpha_p}$ with $\bol\alpha = (\alpha_1, \ldots, \alpha_p) \in \mathbb{Z}_{\geq 0}^p$ and $|\bol\alpha|_1:=\sum_{j=1}^p\alpha_j.$

\end{enumerate}
Let $\mathbf d=(d_0,\ldots,d_{q+1})\in\mathbb N^{q+2}$ with $d_0=d$ and $d_{q+1}=1$, $\mathbf t=(t_0, \ldots, t_q)\in\mathbb N^{q+1}$ with $t_i\leq d_i$ for all $i$ and $\bol\beta=(\beta_0, \ldots, \beta_q)\in(0,\infty)^{q+1}$. 
We define $\mcl G\big(q, \mathbf d, \mathbf t, \bol\beta, A\big)$ as the class of functions $f:[0,1]^d\to\mathbb R$ of the form
\begin{equation}\label{eq.mult_composite_regression}
f= g_q \circ \cdots \circ g_0,
\end{equation}
where $g_i=(g_{ij})_j : [l_i, u_i]^{d_i}\rightarrow [l_{i+1},u_{i+1}]^{d_{i+1}}$ with $g_{ij} \in C_{t_i}^{\beta_i}\big([l_i,u_i]^{d_i}, A\big)$ for some $|l_{i+1}|, |u_{i+1}| \leq A$, $i=0,\dots,q$. 

Denote by $\mathcal M\big(\mathbf c, q, \mathbf d, \mathbf t, \bol\beta, A\big)$ the class of functions in $\mcl M_0(\mathbf c)$ whose restrictions to $[0,1]^d$ belong to $\mathcal G\big(q, \mathbf d, \mathbf t, \bol\beta, A\big)$. 
Also, define $\beta_i^* := \beta_i \prod_{\ell=i+1}^q (\beta_{\ell}\wedge 1)$, $\phi_T := \max_{i=0, \ldots, q } T^{-\frac{2\beta_i^*}{2\beta_i^*+ t_i}}$.

\begin{example}[Nonlinear additive AR model]\label{Ex:add-ar}
Consider a nonlinear AR model: 
\begin{align*}
Y_t &= m_1(Y_{t-1}) + \dots + m_d(Y_{t-d}) + v_t, 
\end{align*}
where $m_1,\dots,m_d$ are univariate measurable functions. In this case, the mean function can be written as a composition of functions $m = g_1 \circ g_0$ with $g_0(x_1,\dots, x_d) = (m_1(x_1),\dots, m_d(x_d))'$ and $g_1(x_1,\dots, x_d) = \sum_{j=1}^{d}x_j$. Suppose that $m_j|_{[0,1]} \in C_1^{\beta}([0,1],A)$ for $j = 1,\dots, d$. Note that $g_1 \in C_d^{\gamma}([-A,A]^d, (A+1)d)$ for any $\gamma>1$. Then we can see that $m|_{[0,1]^d}: [0,1]^{d} \to [-Ad,Ad]$ and 
 \[
 m|_{[0,1]^d} \in \mathcal G\big(1, (d,d,1), (1,d), (\beta, (\beta \vee 2)d), (A+1)d\big). 
 \]
Hence $\phi_T=T^{-\frac{2\beta}{2\beta+1}}$ in this case.
\end{example}

\begin{example}[Nonlinear generalized additive AR model]\label{Ex:g-add-ar}
Consider a nonlinear AR model: 
\begin{align*}
Y_t &= \phi(m_1(Y_{t-1}) + \dots + m_d(Y_{t-d})) +v_t, 
\end{align*}
where $\phi: \mathbb{R} \to \mathbb{R}$ is some unknown link function. In this case, the mean function can be written as a composition of functions $m = g_2 \circ g_1 \circ g_0$ with $g_0$ and $g_1$ as in Example \ref{Ex:add-ar} and $g_2 = \phi$.  Suppose that $\phi \in C_1^{\gamma}([-Ad,Ad],A)$ and take $m_j$ and $g_1$ as in Example \ref{Ex:add-ar}. Then we can see that $m|_{[0,1]^d}: [0,1]^{d} \to [-A,A]$ and 
 \[
 m|_{[0,1]^d} \in \mathcal G\big(2, (d,d,1,1), (1,d,1), (\beta, (\beta \vee 2)d,\gamma), (A+1)d\big). 
 \]
Hence $\phi_T=T^{-\frac{2\beta(\gamma\wedge1)}{2\beta(\gamma\wedge1)+1}}\vee T^{-\frac{2\gamma}{2\gamma+1}}$ in this case.
\end{example}

\begin{example}[Single-index model]\label{ex:simod}
Consider a nonlinear AR model: 
\begin{align*}
Y_t &= \phi_0(Z_t)+\phi_1(Z_t)Y_{t-1}+\cdots+\phi_d(Z_t)Y_{t-d} +v_t,&
Z_t&=b_0+b_1Y_{t-1}+\cdots+b_dY_{t-d}, 
\end{align*}
where, for $j=0,1,\dots,d$, $\phi_j: \mathbb{R} \to \mathbb{R}$ is an unknown function and $b_j$ is an unknown constant. 
In this case, the mean function can be written as a composition of functions $m = g_2 \circ g_1 \circ g_0$ with $g_0(x_1,\dots,x_d)=(b_0+b_1x_1+\cdots+b_dx_d,x_1,\dots,x_d)'$, $g_1\!(z,x_1,\dots,x_d)=(\!\phi_0(z),\dots,\phi_d(z),x_1,\dots,x_d\!)'$, and $g_2(w_0,w_1,\dots,w_d,x_1,\dots,x_d)=w_0+w_1x_1+\cdots+w_dx_d$. 
Suppose that $\phi_0,\dots,\phi_d \in C_1^\beta \!(\![-A,A],A\!)$ for some constants $\beta\geq1$ and $A\geq1\vee\sum_{j=0}^d|b_j|$. 
Then we have
 \[
 m|_{[0,1]^d} \in \mathcal G\big(2, (d,d+1,2d+1,1), (d,1,2d+1), (\beta d,\beta, \beta (2d+1)), (A+1)(1+d+dA)\big). 
 \]
 Hence $\phi_T=T^{-\frac{2\beta}{2\beta+1}}$ in this case.
\end{example}

Below we show the minimax lower bound for estimating $f_0 \in \mathcal{M}(\mathbf c, q, \mathbf d, \mathbf t, \bol\beta, A)$.

\begin{theorem}\label{thm:minimax}
Consider the nonlinear AR($d$) model \eqref{ar-model} with $m\in\mathcal{M}(\mathbf c, q, \mathbf d, \mathbf t, \bol\beta, A)$. 
Suppose that $c_0\geq A$ and $t_j\leq\min\{d_0,\dots,d_{j-1}\}$ for all $j$. Then, for sufficiently large $A$, 
\[
\liminf_{T\to\infty}\phi_T^{-1}\inf_{\hat f_T}\sup_{m\in\mathcal{M}(\mathbf c, q, \mathbf d, \mathbf t, \bol\beta, A)}R(\hat f_T,f_0)>0,
\]
where the infimum is taken over all estimators $\hat f_T$. 
\end{theorem}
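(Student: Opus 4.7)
The plan is to apply a standard multi-scale bump construction combined with Assouad's lemma, adapting the i.i.d.\ argument of Schmidt-Hieber (2020) to the AR setting using Lemma \ref{lemma:beta}.

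\textbf{Hypothesis construction.} Choose $i^*\in\{0,\dots,q\}$ achieving the maximum in the definition of $\phi_T$, and a coordinate index $j^*$. Exploiting $t_{i^*}\le \min\{d_0,\dots,d_{i^*-1}\}$, I build a reference composition $g_0^*,\dots,g_q^*$ lying in $\mcl M(\mathbf c,q,\mathbf d,\mathbf t,\bol\beta,A)$ for which only $t_{i^*}$ coordinates enter $g_{i^*,j^*}^*$ nontrivially, with enough slack in the H\"older norm to accommodate perturbations. Set $h_T:=T^{-1/(2\beta_{i^*}^{*}+t_{i^*})}$, partition a sub-cube of $[0,1]^{t_{i^*}}$ into $M_T\asymp h_T^{-t_{i^*}}$ disjoint cubes of side $h_T$, and attach to each a translated, rescaled bump $\psi_k\in C_{t_{i^*}}^{\beta_{i^*}}([0,1]^{t_{i^*}},C_{\text{bump}})$ of supremum $\asymp h_T^{\beta_{i^*}}$ with disjoint supports. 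For $\omega\in\{0,1\}^{M_T}$ set $g^{(\omega)}_{i^*,j^*}:=g_{i^*,j^*}^{*}+\sum_{k=1}^{M_T}\omega_k\psi_k$ and keep the remaining components fixed. A chain-rule propagation through $g_{i^*+1}^{*},\dots,g_q^{*}$ (as in the proof of Theorem 3 in Schmidt-Hieber, 2020) yields
\begin{align*}
\|f_\omega-f_{\omega'}\|_{L^2([0,1]^d)}^{2}\asymp |\omega-\omega'|_H\, h_T^{2\beta_{i^*}^{*}+t_{i^*}},
\end{align*}
where $|\cdot|_H$ denotes Hamming distance. For sufficiently large $A$ all $m_\omega$ lie in $\mcl M(\mathbf c,q,\mathbf d,\mathbf t,\bol\beta,A)$ and satisfy $\|m_\omega\|_\infty\le A\le c_0$, hence $m_\omega\in\mcl M_0(\mathbf c)$.

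\textbf{Uniform mixing and density control.} Since each $m_\omega\in\mcl M_0(\mathbf c)$, Lemma \ref{lemma:beta} supplies a uniform exponential $\beta$-mixing rate and, in particular, a stationary marginal $\mu_\omega$ of $X_t$. A standard drift--minorization argument, exploiting that the one-step transition kernel has Gaussian density bounded below on any compact set uniformly over $m\in\mcl M_0(\mathbf c)$, yields constants $0<c_-\le c_+<\infty$ depending only on $(\mathbf c,d,A,\nu)$ such that $c_-\le d\mu_\omega/dx\le c_+$ on $[0,1]^d$, uniformly in $\omega$ and in $t$ past a negligible burn-in. The same bounds apply to the law of the independent copy $X_t^{*}$.

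\textbf{KL bound and Assouad.} Because the noise $v_t$ is i.i.d.\ $N(0,1)$ with common initial law $\nu$, the chain rule for Kullback--Leibler divergence gives
\begin{align*}
\operatorname{KL}(P_\omega^{T}\,\|\,P_{\omega'}^{T}) = \tfrac12\sum_{t=1}^{T}\E_\omega\bigl[(m_\omega(X_t)-m_{\omega'}(X_t))^{2}\bigr] \le \tfrac{c_+}{2}\,T\,\|f_\omega-f_{\omega'}\|_{L^2([0,1]^d)}^{2},
\end{align*}
since the two functions agree outside $[0,1]^d$. For $\omega,\omega'$ at Hamming distance one this is $\le C\,T h_T^{2\beta_{i^*}^{*}+t_{i^*}}=C$, independent of $T$. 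Applying Assouad's lemma (e.g.\ Theorem 2.12 of Tsybakov, 2009) to the packing $\{m_\omega\}_{\omega\in\{0,1\}^{M_T}}$ with the loss $R(\cdot,f_\omega)$ (which is equivalent to $\|\cdot-f_\omega\|_{L^2(\mu_\omega^{*})}^{2}$ via the lower bound $c_-$) yields
\begin{align*}
\inf_{\hat f_T}\sup_{\omega\in\{0,1\}^{M_T}} R(\hat f_T,f_\omega) \gtrsim M_T\, h_T^{2\beta_{i^*}^{*}+t_{i^*}}\asymp h_T^{2\beta_{i^*}^{*}}\asymp\phi_T,
\end{align*}
which, since $\{m_\omega\}\subset\mcl M(\mathbf c,q,\mathbf d,\mathbf t,\bol\beta,A)$, establishes the claim.

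\textbf{Main obstacle.} The delicate step is the second one: establishing a uniform two-sided density regime for $\mu_\omega$ on $[0,1]^d$ that is genuinely independent of $\omega$. Upper bounds are straightforward from the sub-linear growth of $m_\omega$ and Gaussianity of $v_t$; lower bounds require a quantitative minorization of the $d$-step transition kernel, which follows (as in the proof of Lemma \ref{lemma:beta}) by writing the $d$-step density as the convolution of $d$ one-step Gaussian densities with drift $|m_\omega(\cdot)|\le c_0+\sum_i c_i|x_i|$ and lower-bounding on a compact set by a positive constant depending only on $(\mathbf c,d,A)$. Once this uniform density regime is in place, the remaining argument is routine nonparametric lower-bound calculus, with the rate $\phi_T$ emerging from the balance $T\,h_T^{2\beta_{i^*}^{*}+t_{i^*}}\asymp 1$.
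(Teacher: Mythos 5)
Your proposal is correct and reaches the same rate, but it takes a genuinely different route from the paper's own proof. The paper first establishes Lemma \ref{lem:reduce-to-l2}, which converts the prediction risk $R(\hat f_T,f_0)$ into the plain $L^2([0,1]^d)$ risk by combining the exponential convergence to the stationary law $\Pi_m$ (from \eqref{eq:geom}) with a uniform lower bound on the stationary density $\pi_m$ over $[0,1]^d$ obtained from the minorization estimate \eqref{eq:minorization}. It then invokes Tsybakov's Theorem 2.7 (the ``many-hypotheses''/Fano-type scheme), reusing verbatim the Varshamov--Gilbert-separated hypotheses $f_{(0)},\dots,f_{(M)}$ constructed in the proof of Theorem 3 of \cite{Sc20}, and bounds the Kullback--Leibler divergence via the fact that, conditionally on $X_{t-d}$, the density of $X_t$ is bounded by $1$; the $d$ burn-in terms contribute the constant $2A^2 d$, and the $\liminf_{T\to\infty}$ plus $T\phi_T\to\infty$ make that constant negligible relative to $\log M$. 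You instead build the full hypercube of bump perturbations on a single level $i^*$ and apply Assouad's lemma (Theorem 2.12 in \cite{Ts09}). The two schemes are interchangeable here and give the same $\phi_T$ from the identical balance $T h_T^{2\beta_{i^*}^*+t_{i^*}}\asymp 1$; Assouad is arguably cleaner in that the KL between neighboring hypotheses is a constant and one does not need a Varshamov--Gilbert packing, whereas the paper's route lets one import Schmidt-Hieber's ready-made family without re-deriving the chain-rule propagation of the bump through $g_{i^*+1}^*,\dots,g_q^*$. Your ``main obstacle'' step is precisely what Lemma \ref{lem:reduce-to-l2} handles, and your proposed fix (lower-bounding the $d$-step transition density on a compact set, as in Lemma \ref{lemma:beta}) is exactly the paper's strategy. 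One simplification you may as well adopt: the upper density bound $c_+$ is simply $1$, since for $t>d$ the conditional density of $X_t$ given $X_{t-d}$ is a product of standard Gaussian densities (the paper uses this directly), so no separate upper-density argument is needed. You should also record, as the paper does, that the $O(d)$ burn-in terms in the KL are crudely bounded by $(2A)^2/2$ each and then absorbed for large $T$; you allude to ``a negligible burn-in'' but this needs to be said explicitly since Assouad's constant-KL condition must account for those terms.
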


Theorem \ref{thm:minimax} and the next result imply that the sparse-penalized DNN estimator $\hat{f}_{T,sp}$ is rate optimal since it attains the minimax lower bound up to a poly-logarithmic factor. We write $\relu$ for the ReLU activation function, i.e.~$\relu(x)=\max\{x,0\}$.

\begin{theorem}\label{thm:rate-comp}
Consider the nonlinear AR($d$) model \eqref{ar-model} with $m\in\mathcal{M}(\mathbf c, q, \mathbf d, \mathbf t, \bol\beta, A)$. 
Let $F\geq1\vee A$ be a constant, $L_T\asymp\log^{r}T$ for some $r>1$, $N_T\asymp T$, $B_T,\lambda_T$ and $\tau_T$ as in Theorem \ref{thm: pred-error-bound2} with $\nu_0=r$, and $\hat f_T$ a minimizer of $\bar Q_T(f)$ subject to $f\in \mathcal{F}_{\relu}(L_T,N_T,B_T,F)$. Then
\[
\sup_{m\in\mathcal{M}(\mathbf c, q, \mathbf d, \mathbf t, \bol\beta, A)}R(\hat f_T,f_0)=O\left(\phi_T\iota_\lambda(T)\log^{3+r}T\right)\qquad\text{as}~T\to\infty.
\]
\end{theorem}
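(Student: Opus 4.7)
The plan is to reduce Theorem \ref{thm:rate-comp} to Proposition \ref{prop:pdnn} by supplying an appropriate ReLU DNN approximation of $f_0$. First, I would check that Assumption \ref{Ass: model} holds uniformly over $m\in\mathcal M(\mathbf c, q, \mathbf d, \mathbf t, \bol\beta, A)$: since this class is contained in $\mathcal M_0(\mathbf c)$, Lemma \ref{lemma:beta} delivers exponential $\beta$-mixing of $Y$ with constants that do not depend on $m$, while the Gaussian innovations $v_t\sim N(0,1)$ satisfy the sub-Gaussian and conditional-centering conditions trivially. Hence Proposition \ref{prop:pdnn} is applicable with $\kappa=\max_{0\leq i\leq q}t_i/(2\beta_i^*)$, so that $\phi_T=T^{-1/(\kappa+1)}$.

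Next I would invoke the ReLU approximation theorem for composition structured functions established in \cite{Sc20}, which provides, for each $f_0=m\mathbf 1_{[0,1]^d}$ with $m\in\mathcal M(\mathbf c, q, \mathbf d, \mathbf t, \bol\beta, A)$, a network $\tilde f$ of depth $\bar L\leq \bar C\log T$, width $\bar N\leq \bar C T$, weights bounded in absolute value by $1$, and sparsity $\bar S\leq \bar C T\phi_T\log T=\bar C T^{\kappa/(\kappa+1)}\log T$, satisfying $\|\tilde f-f_0\|_\infty^2\leq \bar C\phi_T$, where $\bar C$ depends only on the structural parameters $(q,\mathbf d,\mathbf t,\bol\beta,A)$. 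Since $L_T\asymp\log^r T$ with $r>1$, we have $L_T\geq\bar L$ for $T$ large, so $\tilde f$ can be embedded into $\mathcal F_{\relu}(L_T,N_T,B_T,F,S_T)$ by padding extra layers with identity blocks $x\mapsto\relu(x)-\relu(-x)$; this inflates width and sparsity by only constant factors, all absorbed in $N_T\asymp T$ and $S_T=C_S T^{\kappa/(\kappa+1)}\log T$. The truncation at level $F$ is harmless because $\|f_0\|_\infty\leq A\leq F$, and the condition $B_T\geq 1$ accommodates the unit weight bound. Since $\bar C$ is independent of $m$, this approximation is uniform over $\mathcal M(\mathbf c, q, \mathbf d, \mathbf t, \bol\beta, A)$.

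Because $\|\cdot\|_{L^2([0,1]^d)}\leq\|\cdot\|_\infty$, the previous step supplies the $L^2$-approximation hypothesis of Proposition \ref{prop:pdnn} with sparsity log-exponent $1$. Applying that proposition with $\nu_0=r$ yields
\begin{equation*}
R(\hat f_T,f_0)\leq 6\bar\Psi_T^{\mathcal F}(\hat f_T)+C'F^2\frac{\iota_\lambda(T)\log^{2+r+1}T}{T^{1/(\kappa+1)}}=6\bar\Psi_T^{\mathcal F}(\hat f_T)+C'F^2\phi_T\iota_\lambda(T)\log^{3+r}T.
\end{equation*}
Since $\hat f_T$ is an exact minimizer of $\bar Q_T$ over a compact parameter set, $\bar\Psi_T^{\mathcal F}(\hat f_T)=0$; taking the supremum over $m\in\mathcal M(\mathbf c, q, \mathbf d, \mathbf t, \bol\beta, A)$ then delivers the claimed rate. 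The main technical point I expect to be delicate is the DNN approximation step, but it is resolved by \cite{Sc20}; the remaining work is bookkeeping, namely translating the $\log T$ depth in Schmidt-Hieber's construction into $\log^r T$ via identity-block padding and propagating the logarithmic exponents carefully so that the final rate lands exactly at $\phi_T\iota_\lambda(T)\log^{3+r}T$.
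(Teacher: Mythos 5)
Your proposal is correct and matches the paper's own proof: both reduce the theorem to Proposition \ref{prop:pdnn} via Schmidt-Hieber's ReLU approximation of composition-structured functions with sparsity $S_T\asymp T^{\kappa/(\kappa+1)}\log T$ and $\kappa=\max_{i}t_i/(2\beta_i^*)$, then set $\nu_0=r$ and log-exponent $1$ in the sparsity to land on $\phi_T\iota_\lambda(T)\log^{3+r}T$. The paper compresses this to three lines, leaving implicit the verification of Assumption \ref{Ass: model} and the identity-block padding step that you spell out; the only small imprecision in your write-up is that padding up to depth $\asymp\log^r T$ adds $O(\log^r T)$ nonzero parameters rather than a constant-factor inflation of the native sparsity, but this is still dominated by $S_T$, so the conclusion is unaffected.
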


\begin{remark}
    The lower bound given in Theorem \ref{thm:minimax} is the same as the one obtained in \cite[Theorem 3]{Sc20} for nonparametric regression models with i.i.d.~errors, while the upper bound in Theorem \ref{thm:rate-comp} has the extra $\log T$ factor compared to \cite{Sc20}. 
    We conjecture that the latter would be an artifact of the proof by the following reasons:
\begin{itemize}
    \item \cite{Ya01} showed that the minimax rates for nonparametric regression models under random design are typically unchanged even if errors have long-range dependence.
    \item \cite{Ho99} showed that the minimax rates for nonlinear AR(1) models are exactly the same as those for nonparametric regression models with i.i.d.~errors in some cases.  
\end{itemize} 
\end{remark}

\subsection{$\ell^0$-bounded affine class}\label{subsec:ell0-bound}

In this subsection, we consider nonparametric estimation of the mean function $f_0$ when it belongs to an $\ell^0$-bounded affine class $\mcl I^0_\Phi$. This class was introduced in \cite{HaSu20} and is defined as follows.

\begin{definition}\label{def:0-affine}
Given a set $\Phi$ of real-valued functions on $\mathbb R^d$ with $\|\varphi\|_{L^2([0,1]^d)}=1$ for each $\varphi\in\Phi$ along with constants $n_s\in\mathbb N$ and $C>0$, we define an \textit{$\ell^0$-bounded affine class $\mcl I^0_\Phi$} as
\bm{
\mcl I^0_\Phi(n_s,C):=\left\{\sum_{i=1}^{n_s}\theta_i\varphi_i(A_i\cdot-b_i):A_i\in\mathbb R^{d\times d},b_i\in\mathbb R^d,\theta_i\in\mathbb R,\varphi_i\in\Phi,\right.\\
\left.|\det A_i|^{-1}\vee|A_i|_\infty\vee|b_i|_\infty\vee|\theta_i|\leq C,~i=1,\dots,n_s\right\}.
} 
\end{definition}

By taking the set $\Phi$ suitably, the class of functions $\mcl I^0_\Phi$ includes many nonlinear AR models such as threshold AR (TAR) models and we can show that the sparse-penalized DNN estimator attains the convergence rate $O(T^{-1})$ up to a poly-logarithmic factor (Theorem \ref{thm:rate-l0}).

\begin{example}[Threshold AR model]\label{ex:tar}
Consider a two-regime TAR(1) model:
\[
Y_t=(a_1\mathbf{1}_{(-\infty,r]}(Y_{t-1}) + a_2\mathbf{1}_{(r,\infty)}(Y_{t-1}))Y_{t-1} + v_t,
\]
where $a_1,a_2,r$ are some constants. This model corresponds to \eqref{ar-model} with $d=1$ and \\$m(y)=(a_1\mathbf{1}_{(-\infty,r]}(y)+a_2\mathbf{1}_{(r,\infty)}(y))y$. Note that the mean function $m$ can be discontinuous and this $m$ can be rewritten as
\[
m(y)=-a_1\relu(r-y)+a_1r\mathbf{1}_{[0,\infty)}(r-y)+a_2\relu(y-r)+a_2r\mathbf{1}_{[0,\infty)}(y-r).
\]
Hence $m\in\mcl I^0_\Phi(n_s,C)$ with $\Phi=\{\sqrt 3\relu,\mathbf{1}_{[0,\infty)}\}$, $n_s\geq4$ and $C\geq\max\{|a_1|,|a_2|,|r|\}$. This argument can be extended to a multi-regime (self-exciting) TAR model of any order in an obvious manner.    
\end{example}

We will later see in Example \ref{ex:far} that functional coefficient AR models are also covered by Definition \ref{def:0-affine}.

We set $\mcl{M}_\Phi^0(\mathbf c,n_s,C):=\mcl{M}_0(\mathbf c)\cap\mcl I^0_\Phi(n_s,C)$. Below we show the minimax lower bound for estimating $f_0 \in \mcl{M}_\Phi^0(\mathbf c,n_s,C)$.

\begin{theorem}\label{thm:minimax-l0}
Consider the nonlinear AR($d$) model \eqref{ar-model} with $m\in\mcl{M}_\Phi^0(\mathbf c,n_s,C)$. 
Suppose that $C\geq1/2$ and there is a function $\varphi\in\Phi$ such that $\supp(\varphi)\subset[0,1]^d$ and $\|\varphi\|_\infty\leq c_0$.  
Then, 
\[
\liminf_{T\to\infty}T\inf_{\hat f_T}\sup_{m\in\mcl{M}_\Phi^0(\mathbf c,n_s,C)}R(\hat f_T,f_0)>0,
\]
where the infimum is taken over all estimators $\hat f_T$. 
\end{theorem}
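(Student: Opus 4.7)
The plan is to apply Le Cam's two-point method. As hypotheses I take $m_0\equiv 0$ and $m_1:=\epsilon_T\varphi$, where $\varphi\in\Phi$ is the function provided by the assumption and $\epsilon_T:=\alpha/\sqrt{T}$ for a constant $\alpha>0$ to be chosen sufficiently small. Both functions belong to $\mcl M_\Phi^0(\mathbf c,n_s,C)$ for all large $T$: writing $m_1$ as a single affine term with $\theta_1=\epsilon_T$, $A_1=I_d$ and $b_1=0$ places $m_1$ in $\mcl I^0_\Phi(n_s,C)$, while $\|m_1\|_\infty\leq\epsilon_T c_0\leq c_0$ shows $m_1\in\mcl M_0(\mathbf c)$; the null function $m_0$ is handled analogously by taking $\theta_i=0$.

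Because the innovations $v_t$ are standard Gaussian, the conditional law of $Y_t$ given the past is $N(m_i(X_t),1)$, so by the chain rule for Kullback--Leibler divergence the joint laws $P_{m_0}^{(T)}$ and $P_{m_1}^{(T)}$ of $(Y_1,\dots,Y_T)$ satisfy
\[
\mathrm{KL}\bigl(P_{m_0}^{(T)}\,\|\,P_{m_1}^{(T)}\bigr)=\frac{1}{2}\sum_{t=1}^T\E_{m_0}\bigl[(m_0(X_t)-m_1(X_t))^2\bigr]\leq\frac{T\epsilon_T^2 c_0^2}{2}=\frac{\alpha^2 c_0^2}{2}.
\]
Pinsker's inequality then yields $\TV\bigl(P_{m_0}^{(T)},P_{m_1}^{(T)}\bigr)<1/2$ once $\alpha$ is chosen small.

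Next I reduce $R(\hat f_T,f_i)$ to an $L^2([0,1]^d)$-error. For every $t\geq d+1$, conditioning on the past and using $Y_s\mid\mcl G_{s-1}\sim N(m_i(X_s),1)$ together with the uniform bound $|m_i|\leq c_0$ (which uses $\supp(\varphi)\subset[0,1]^d$, $\|\varphi\|_\infty\leq c_0$ and $\epsilon_T\leq 1$), the conditional density of $(Y_{t-d},\dots,Y_{t-1})$ at any point in $[0,1]^d$ is bounded below by $\kappa_0:=\phi(1+c_0)^d>0$, where $\phi$ is the standard Gaussian density. This pointwise lower bound survives marginalization over the past, so
\[
R(\hat f_T,f_i)\geq\frac{T-d}{T}\kappa_0\,\E_{m_i}\bigl[\|\hat f_T-f_i\|_{L^2([0,1]^d)}^2\bigr]\geq\frac{\kappa_0}{2}\E_{m_i}\bigl[\|\hat f_T-f_i\|_{L^2([0,1]^d)}^2\bigr]
\]
for $T\geq 2d$.

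Finally, since $\|f_1-f_0\|_{L^2([0,1]^d)}=\epsilon_T\|\varphi\|_{L^2([0,1]^d)}=\epsilon_T$, the classical two-point inequality gives
\[
\max_{i\in\{0,1\}}\E_{m_i}\bigl[\|\hat f_T-f_i\|_{L^2([0,1]^d)}^2\bigr]\geq\frac{\epsilon_T^2}{8}\Bigl(1-\TV\bigl(P_{m_0}^{(T)},P_{m_1}^{(T)}\bigr)\Bigr)\gtrsim\frac{1}{T},
\]
and combining this with the previous display shows $\inf_{\hat f_T}\sup_m R(\hat f_T,f_0)\gtrsim 1/T$, which is the claimed $\liminf$. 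The main obstacle in this plan is the uniform-in-$m$ lower bound on the density of $X_t^{\ast}$ over $[0,1]^d$; it works cleanly here because the Gaussian transition kernel with uniformly bounded drift stays bounded below on compact sets, and this is exactly what forces $m_1$ to be taken as a small $O(1/\sqrt{T})$ perturbation of the zero function.
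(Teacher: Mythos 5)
Your proof is correct, and while the core Le~Cam two-point construction (perturbing around the zero function by a rescaled $\varphi$, KL bound, Pinsker) matches the paper's, the reduction from the generalization error $R(\hat f_T,f_0)$ to the $L^2([0,1]^d)$-risk is carried out quite differently. The paper invokes its general Lemma~\ref{lem:reduce-to-l2}, whose proof goes through the invariant distribution $\Pi_m$ of the Markov chain, geometric ergodicity via the Hairer--Mattingly argument (Lemma~\ref{lemma:beta}), and a minorization estimate to show $\Pi_m$ has density bounded below on $[0,1]^d$; this machinery is designed to cover arbitrary $m\in\mcl M_0(\mathbf c)$ with linearly growing drift. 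You instead exploit that the two specific alternatives you use are supported on $[0,1]^d$ and uniformly bounded by $c_0$, so that conditional on $X_{t-d}$ the $d$-step transition density of $X_t$ on $[0,1]^d$ is the product of Gaussian densities with bounded shifts and hence is bounded below by $g(1+c_0)^d$ for every $t\geq d+1$; integrating out the past immediately gives a uniform lower bound on the marginal density of $X_t^*$ on $[0,1]^d$. This is simpler and entirely elementary, at the price of being tailored to the bounded, compactly-supported perturbations used in the lower bound (it would not directly replace Lemma~\ref{lem:reduce-to-l2} for general elements of $\mcl M_0(\mathbf c)$, which can be unbounded). The remaining steps --- the single-hypothesis asymmetric pair $\{0,\epsilon_T\varphi\}$ versus the paper's symmetric pair $\{\pm\varphi/(2\sqrt T)\}$, and Pinsker--Le~Cam versus Tsybakov's Theorem~2.2 --- are cosmetic. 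Both proofs go through.
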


Now we extend the argument in Example \ref{ex:tar}. For this, we introduce the function class \\$\mathrm{AP}_{\sigma,d}(C_1,C_2,D,r)$ which can be approximated by ``light'' networks.

\begin{definition}\label{defi:AP}
For $C_1,C_2,D>0$ and $r\geq0$, we denote by $\mathrm{AP}_{\sigma,d}(C_1,C_2,D,r)$ the set of functions $\varphi:\mathbb R^d\to\mathbb R$ satisfying that, for each $\eps\in(0,1/2)$, there exist parameters $L_\eps,N_\eps,B_\eps,S_\eps>0$ such that
\begin{itemize}

\item $L_\eps\vee N_\eps\vee S_\eps\leq C_1\{\log_2(1/\eps)\}^{r}$ and $B_\eps\leq C_2/\eps$ hold;

\item there exists an $f\in\mathcal{F}_{\sigma}(L_\eps,N_\eps,B_\eps)$ such that $|\theta(f)|_0\leq S_\eps$ and $\|f-\varphi\|_{L^2([-D,D]^d)}^2\leq\eps$.

\end{itemize}
\end{definition}

Depending on the value of $r$, $\mathrm{AP}_{\sigma,d}(C_1,C_2,D,r)$ contains various functions such as step functions ($0 \leq r $), polynomials ($r=1$), and very smooth functions ($r=2$).

\begin{example}[Piecewise linear functions]\label{ex:plin}
For $\sigma=\relu$, we evidently have \\$\relu\in\mathrm{AP}_{\sigma,1}(C_1,C_2,D,r)$ for any $C_1,C_2,D\geq2$ and $r\geq0$. In this case we also have $\mathbf{1}_{[0,\infty)}\in\mathrm{AP}_{\sigma,1}(C_1,C_2,D,r)$ if $C_1,C_2\geq7$. In fact, for any $\eps\in(0,1/2)$, the function \\$f_\eps(x)=\sigma\left(\sigma(x+1)-\sigma(x)-\frac{1}{\eps}\sigma(-x)\right)$, $x\in\mathbb R$
satisfies $\|f_\eps-\mathbf{1}_{[0,\infty)}\|_{L^2([-D,D])}^2\leq\eps$.
\end{example}

\begin{example}[Polynomial functions]\label{ex:poly}
Take $\sigma=\relu$ and consider a polynomial function $\varphi(x)=\sum_{i=0}^pa_ix^i$ for some constants $a_0,\dots,a_p\in\mathbb R$. Then, given $D>0$, we have $\varphi\in\mathrm{AP}_{\sigma,1}(C_1,1/2,D,1)$ for some constant $C_1>0$ depending only on $\max_{i=0,\dots,p}|a_i|$, $p$ and $D$ by Proposition III.5 in \cite{EPGB21}. 
\end{example}

\begin{example}[Very smooth functions]\label{ex:holo}
Take $\sigma=\relu$ again. 
Let $\varphi:\mathbb R\to\mathbb R$ be a $C^\infty$ function such that there are constants $A\geq1$ and $D>0$ satisfying $\sup_{x\in[-D,D]}|\varphi^{(n)}(x)|\leq n!A$ for all $n\in\mathbb Z_{\geq0}$. 
Then, by Lemma A.6 in \cite{EPGB21}, $A^{-1}\varphi\in\mathrm{AP}_{\sigma,1}(C_1,1,D,2)$ for some constants $C_1>0$ depending only on $D$. 
Hence $\varphi\in\mathrm{AP}_{\sigma,1}(C_1,A,D,2)$. 
The condition on $\varphi$ is satisfied e.g.~when there is a holomorphic function $\Psi$ on $\{z\in\mathbb C:|z|<D+1\}$ such that $|\Psi|\leq A$ and $\Psi(x)=\varphi(x)$ for all $x\in[-D,D]$. This follows from Cauchy's estimates (cf.~Theorem 10.26 in \cite{Ru87}).
\end{example}

\begin{example}[Product with an indicator function]\label{ex:prod}
Again consider the ReLU activation function $\sigma=\relu$. 
Let $\varphi\in\mathrm{AP}_{\sigma,1}(C_1,C_2,D,r)$ for some constants $C_1,C_2,D>0$ and $r\geq1$, and assume $\sup_{x\in[-D,D]}|\varphi(x)|\leq A$ for some constant $A\geq1$. 
Then $\varphi\mathbf1_{[0,\infty)}\in\mathrm{AP}_{\sigma,1}(C_3,C_3,D,r)$ for some constant $C_3$ depending only on $C_1,C_2,D,A$. 
To see this, fix $\eps\in(0,1/2)$ arbitrarily and take $L_\eps,N_\eps,B_\eps,S_\eps$ and $f$ as in Definition \ref{defi:AP}. Also, let $f_\eps$ defined as in Example \ref{ex:plin}. 
By Proposition III.3 in \cite{EPGB21}, there is an $f_1\in\mcl F_\sigma(C_4\log(1/\eps),5,1)$ with $C_4>0$ depends only on $A$ such that $\sup_{x,y\in[-A,A]}|f_1(x,y)-xy|\leq\eps$. 
Then, by Lemmas II.3--II.4 and A.7 in \cite{EPGB21}, there is an $f_2\in\mcl F_\sigma(C_5\{\log(1/\eps)\}^r,C_5\{\log(1/\eps)\}^r,C_5/\eps)$ with $C_5>0$ depending only on $C_1,C_2,A$ such that $f_2(x)=f_1(f(x),f_\eps(x))$ for all $x\in\mathbb R$ and $\|\theta(f_2)\|_\infty\leq C_5\{\log(1/\eps)\}^r$. For this $f_2$, we have
\ba{
&\|f_2-\varphi\mathbf1_{[0,\infty)}\|_{L^2([-D,D])}\\
&\leq\|f_2-ff_\eps\|_{L^2([-D,D])}+\|(f-\varphi)f_\eps\|_{L^2([-D,D])}
+\|\varphi(f_\eps-\mathbf1_{[0,\infty)})\|_{L^2([-D,D])}\\
&\leq (D+1+A)\sqrt{\eps}.
}
Applying this argument to $\eps/\sqrt{D+1+A}$ instead of $\eps$, we obtain the desired result. 
\end{example}

Theorem \ref{thm:minimax-l0} and the next result imply that the sparse-penalized DNN estimator $\hat{f}_{T,sp}$ attains the minimax optimal rate over $\mcl{M}_\Phi^0(\mathbf c,n_s,C)$ up to a poly-logarithmic factor.

\begin{theorem}\label{thm:rate-l0}
Consider the nonlinear AR($d$) model \eqref{ar-model} with $m\in\mcl{M}_\Phi^0(\mathbf c,n_s,C)$. 
Suppose that $\Phi\subset\mathrm{AP}_{\relu,d}(C_1,C_2,D,r)$ for some constants $C_1,C_2>0$, $D\geq(d+1)C$ and $r\geq0$. 
Let $F\geq1+c_0$ be a constant, $L_T\asymp\log^{r'}T$ for some $r'>r$, $N_T\asymp T$, $B_T\asymp T^{\nu}$ for some $\nu>1$, $\lambda_T$ and $\tau_T$ as in Theorem \ref{thm: pred-error-bound2} with $\nu_0=r'$, and $\hat f_T$ a minimizer of $\bar Q_T(f)$ subject to $f\in \mathcal{F}_{\relu}(L_T,N_T,B_T,F)$. Then
\[
\sup_{m\in\mcl{M}_\Phi^0(\mathbf c,n_s,C)}R(\hat f_T,f_0)=O\left(\frac{\iota_\lambda(T)\log^{2+r+r'}T}{T}\right)\qquad\text{as}~T\to\infty.
\]
\end{theorem}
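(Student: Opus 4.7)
The plan is to apply Proposition \ref{prop:pdnn} with $\kappa=0$ and with the Proposition's sparsity exponent matched to the parameter $r$ coming from $\mathrm{AP}_{\relu,d}(C_1,C_2,D,r)$. The parameters in the present Theorem are chosen precisely so that the choices $L_T,N_T,B_T,S_T$ in Proposition \ref{prop:pdnn} can be realized: $L_T\asymp\log^{r'}T$ accommodates a depth $O(\log^r T)$ since $r'>r$, $N_T\asymp T$ dominates $O(\log^r T)$ width, and $B_T\asymp T^\nu$ with $\nu>1$ dominates the required weight magnitude $O(T)$. Under these choices, since $\hat f_T$ is an exact minimizer of $\bar Q_T$ over $\mcl F=\mcl F_{\relu}(L_T,N_T,B_T,F)$, the residual term $\bar\Psi_T^{\mcl F}(\hat f_T)$ vanishes. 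Once Proposition \ref{prop:pdnn} applies with $\kappa=0$ and exponent $r$, the bound reduces to $C'F^2\iota_\lambda(T)\log^{2+r'+r}T/T$, which is exactly the assertion.

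\textbf{Key step: constructing the sparse approximation.} By definition, any $m\in\mcl M_\Phi^0(\mathbf c,n_s,C)$ is of the form $m(x)=\sum_{i=1}^{n_s}\theta_i\varphi_i(A_ix-b_i)$ with $\varphi_i\in\Phi$ and $|\det A_i|^{-1}\vee|A_i|_\infty\vee|b_i|_\infty\vee|\theta_i|\leq C$. Fix $\eps=1/T$ and for each $i$ choose, via the hypothesis $\varphi_i\in\mathrm{AP}_{\relu,d}(C_1,C_2,D,r)$, a network $\tilde f_i\in\mcl F_{\relu}(L_\eps,N_\eps,B_\eps)$ with $|\theta(\tilde f_i)|_0\leq S_\eps$ and $\|\tilde f_i-\varphi_i\|_{L^2([-D,D]^d)}^2\leq\eps$, where $L_\eps\vee N_\eps\vee S_\eps\leq C_1\log_2^r(1/\eps)$ and $B_\eps\leq C_2/\eps$. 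For $x\in[0,1]^d$ we have $|A_ix-b_i|_\infty\leq|A_i|_\infty d+|b_i|_\infty\leq C(d+1)\leq D$, so changing variables $u=A_ix-b_i$ gives
\[
\int_{[0,1]^d}\bigl|\tilde f_i(A_ix-b_i)-\varphi_i(A_ix-b_i)\bigr|^2\,dx\leq|\det A_i|^{-1}\,\eps\leq C\eps.
\]
Composing $\tilde f_i$ with the affine map $x\mapsto A_ix-b_i$ either by absorbing into the first weight layer or by prepending an ``identity'' layer $u\mapsto\sigma(u)-\sigma(-u)$, stacking the $n_s$ resulting subnetworks in parallel, taking the linear combination with coefficients $\theta_i$, and finally composing with a ReLU-realizable clipping to $[-F,F]$, yields a network $f^\star$ of depth $O(\log^rT)$, width $O(\log^rT)$, weight magnitude $O(T)$ and sparsity $O(\log^rT)$. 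The triangle inequality and $|\theta_i|\leq C$ give $\|f^\star-f_0\|_{L^2([0,1]^d)}^2\leq n_s^2C^3\eps=O(1/T)$; the clipping does not increase the error since $|f_0|\leq c_0+\sum_ic_i\leq c_0+1\leq F$.

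\textbf{Applying Proposition \ref{prop:pdnn}.} With $\kappa=0$, the $r$ in Proposition \ref{prop:pdnn} taken equal to the present $r$, and $\nu_0=r'$, the constructed $f^\star$ witnesses the approximation hypothesis of that Proposition for all sufficiently large $T$, uniformly in $m\in\mcl M_\Phi^0(\mathbf c,n_s,C)$ (the constants depend only on $\mathbf c,C,C_1,C_2,D,r,n_s,d$). Since $\hat f_T$ is an exact minimizer of $\bar Q_T$ over $\mcl F$, $\bar\Psi_T^{\mcl F}(\hat f_T)=0$, and Lemma \ref{lemma:beta} ensures that the $\beta$-mixing condition required by Theorem \ref{thm: pred-error-bound2} (and hence by Proposition \ref{prop:pdnn}) holds with constants uniform over $\mcl M_0(\mathbf c)\supset\mcl M_\Phi^0(\mathbf c,n_s,C)$. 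The conclusion of Proposition \ref{prop:pdnn} therefore yields $\sup_{m}R(\hat f_T,f_0)\leq C'F^2\iota_\lambda(T)\log^{2+r'+r}T/T$, as required.

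\textbf{Main obstacle.} The principal technical burden is the bookkeeping of network parameters when assembling $f^\star$: verifying that inserting the affine maps $A_i\cdot-b_i$, parallelizing the $n_s$ subnetworks, summing with the coefficients $\theta_i$, and performing the final clipping can all be done while keeping depth $O(\log^rT)$, sparsity $O(\log^rT)$, and weight magnitude $O(T)$. Handling the affine prefix is the most delicate point, since naively absorbing $A_i$ into the first weight matrix can magnify the weight bound by a factor depending on $d$; inserting a one-layer ReLU identity block circumvents this at the modest cost of one extra layer, which is harmless since $r'>r$.
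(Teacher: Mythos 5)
Your proposal is correct and follows essentially the same route as the paper: reduce to Proposition \ref{prop:pdnn} with $\kappa=0$, build the sparse approximant $\sum_i\theta_i f_i(A_i\cdot-b_i)$ via the $\mathrm{AP}_{\relu,d}$ hypothesis at accuracy $\eps=1/T$, change variables using $|A_ix-b_i|_\infty\leq(d+1)C\leq D$ and $|\det A_i|^{-1}\leq C$, clip to $[-F,F]$, and note that the exact minimizer makes $\bar\Psi_T^{\mcl F}$ vanish while Lemma \ref{lemma:beta} supplies the uniform mixing needed by Theorem \ref{thm: pred-error-bound2}. The only cosmetic difference is that the paper delegates the network-assembly bookkeeping (affine prefix, parallelization, sum, clipping within the stated depth/width/sparsity/weight budgets) to Lemmas II.3--II.4 and A.8 of \cite{EPGB21}, whereas you sketch the same constructions directly; both reach the identical parameter counts and the same final bound.
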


\begin{example}
By Examples \ref{ex:tar} and \ref{ex:plin}, the sparse-penalized DNN estimator adaptively achieve the minimax rate of convergence up to a logarithmic factor for threshold AR models. Thanks to Examples \ref{ex:poly}--\ref{ex:prod}, this result can be extended to some threshold AR models with nonlinear coefficients. 
\end{example}

\begin{example}[Functional coefficient AR model]\label{ex:far}
Examples \ref{ex:holo} and \ref{ex:prod} also imply that Theorem \ref{thm:rate-l0} can be extended to some functional coefficient AR (FAR) models introduced in \cite{ChTs93}: 
\[
Y_t = f_1(\mathbf Y_{t-1}^*)Y_{t-1} + \dots + f_d(\mathbf Y_{t-1}^*)Y_{t-d} + v_t
\]
where $\mathbf Y_{t-1}^* = (Y_{t-1},\dots, Y_{t-d})'$ and $f_j : \mathbb{R}^{d} \to \mathbb{R}$ are measurable functions. This model include many nonlinear AR models such as (1) TAR models (when $f_j$ are step functions), (2) exponential AR (EXPAR) models proposed in \cite{HaOz81} (when $f_j$ are exponential functions), and (3) smooth transition AR (STAR) models (e.g. \cite{GrTe93} and \cite{Te94}). Note that some classes of FAR models such as EXPAR and STAR models can be written as a composition of functions so Theorem \ref{thm:rate-comp} can be applied to those examples. 
\end{example}

\section{Simulation results}\label{sec:simulation}

In this section, we conduct a simulation experiment to assess the finite sample performance of DNN estimators for the mean function of nonlinear time series. 
Following \cite{OhKi22}, we compare the following five estimators in our experiment: Kernel ridge regression (KRR), $k$-nearest neighbors (kNN), random forest (RF), non-penalized DNN estimator (NPDNN), and sparse-penalized DNN estimator (SPDNN).

For kernel ridge regression, we used a Gaussian radial basis function kernel and selected the tuning parameters by 5-fold cross-validation as in \cite{OhKi22}. We determined the search grids for selection of the tuning parameters following \cite{Ex13}. The tuning parameter $k$ for $k$-nearest neighbors was also selected by 5-fold cross-validation with the search grid $\{5,7,\dots,41,43\}$. For random forest, unlike \cite{OhKi22}, we did not tune the number of the trees but fix it to 500 following discussions in \cite[Section 15.3.4]{HTF09} as well as the analysis of \cite{PrBo18}. Instead, we tuned the number of variables randomly sampled as candidates at each split. This was done by the R function \fun{tuneRF} of the package \pck{randomForest}.

For the DNN based estimators, we set the network architecture $(L,\mathbf p)$ as $L=3$ and $p_1=p_2=p_3=128$ along with the ReLU activation function $\sigma(x)=\max\{x,0\}$. 
Supposing that data were appropriately scaled, we ignored the restriction to $[0,1]^d$ of observations when constructing (and evaluating) the DNN based estimators. 
The network weights were trained by Adam \citep{KiBa15} with learning rate $10^{-3}$ and minibatch size of 64. To avoid overfitting, we determined the number of epochs by the following early stopping rule: First, we train the network weights using the first half of observation data and evaluate its mean square error (MSE) using the second half of the data at each epoch. We stop the training when the MSE is not improved within 5 epochs. After determining the number of epochs by this rule, we trained the network weights using the full sample. 
For the sparse-penalized DNN estimator, we also need to select the penalty function $\pi_{\lambda_T,\tau_T}$ and the tuning parameters $\lambda_T$ and $\tau_T$. We use the clipped $L_1$ penalty given by \eqref{eq:clip}. We set $\tau_T=10^{-9}$. $\lambda_T$ was selected from $\{\frac{S_y\log_{10}^3T}{8T},\frac{S_y\log_{10}^3T}{4T},\frac{S_y\log_{10}^3T}{2T},\frac{S_y\log_{10}^3T}{T},\frac{2S_y\log_{10}^3T}{T}\}$ to minimize the MSE in the above early stopping rule. Here, $S_y$ is the sample variance of $\{Y_t\}_{t=1}^T$.

We consider the following non-linear AR models for data-generating processes. Throughout this section, $\{\eps_t\}_{t=1}^T$ denote i.i.d.~standard normal variables.

\begin{description}
\item[EXPAR] $Y_t=a_1(Y_{t-1})Y_{t-1}+a_2(Y_{t-1})Y_{t-2}+0.2\eps_t$ with 
\ba{
a_1(y)&=0.138+(0.316+0.982y)e^{-3.89y^2},\\
a_2(y)&=-0.437-(0.659+1.260y)e^{-3.89y^2}.
}

\item[TAR] $Y_t=b_1(Y_{t-1})Y_{t-1}+b_2(Y_{t-1})Y_{t-2}+\eps_t$ with
\ba{
b_1(y)&=0.4\cdot\mathbf1_{(-\infty,1]}(y)-0.8\cdot\mathbf1_{(1,\infty)}(y),\\
b_2(y)&=-0.6\cdot\mathbf1_{(-\infty,1]}(y)+0.2\cdot\mathbf1_{(1,\infty)}(y).
}

\item[FAR] 
\[
Y_t=-Y_{t-2}\exp(-Y_{t-2}^2/2)+\frac{1}{1+Y_{t-2}^2}\cos(1.5Y_{t-2})Y_{t-1}+0.5\eps_t.
\]

\item[AAR] 
\ba{
Y_t=4\frac{Y_{t-1}}{1+0.8Y_{t-1}^2}+\frac{\exp\{3(Y_{t-2}-2)\}}{1+\exp\{3(Y_{t-2}-2)\}}+\eps_t.
}

\item[SIM] 
\ba{
Y_t=\exp(-8Z_t^2)+0.5\sin(2\pi Z_t)Y_{t-1}+0.1\eps_t,\quad
Z_t=0.8Y_{t-1}+0.6Y_{t-2}-0.6.
}

\item[SIM$_v$] For $v\in\{0.5,1.0,5.0\}$,
\ba{
Y_t&=\{\Phi(-vZ_t)-0.5\}Y_{t-1}+\{\Phi(2vZ_t)-0.6\}Y_{t-2}+\eps_t,\\
Z_t&=Y_{t-1}+Y_{t-2}-Y_{t-3}-Y_{t-4},
} 
where $\Phi$ is the standard normal distribution function. 
\end{description}

The first four models, EXPAR, TAR, FAR and AAR, are taken from Chapter 8 of \cite{FaYa03}; see Examples 8.3.7, 8.4.7 and 8.5.6 ibidem. The models SIM and SIM$_v$ are respectively taken from \cite[Example 1]{XiLi99} and \cite[Example 3.2]{XLT07} to cover the single-index model (cf.~Example \ref{ex:simod}). 
Since the model SIM$_v$ has a parameter $v$ varying over $\{0.5,1,5\}$, we consider totally eight models. 
We generated observation data $\{Y_t\}_{t=1}^T$ with $T=400$ and burn-in period of 100 observations.

As in \cite{OhKi22}, we evaluate the performance of each estimator by the empirical $L_2$ error computed based on newly generated $10^5$ simulated data. 
Figure \ref{fig:sim} shows the boxplots of the empirical $L_2$ errors of the five estimators over 500 Monte Carlo replications for eight models. 
As the figure reveals, the performances of KRR, NPDNN and SPDNN are superior to those of KNN and RF. Moreover, except for FAR, the DNN based estimators are comparable or better than KRR: 
For models with intrinsic low-dimensional structures such as AAR, SIM and SIM$_{0.5}$, the DNN based estimators perform slightly better than KRR. 
For models with discontinuous or rough mean functions such as TAR, SIM$_{1}$ and SIM$_5$, the performances of the DNN based estimators dominate that of KRR. 
These observations are in line with theoretical results developed in this paper.


\begin{figure}
\begin{center}
\includegraphics[scale=1]{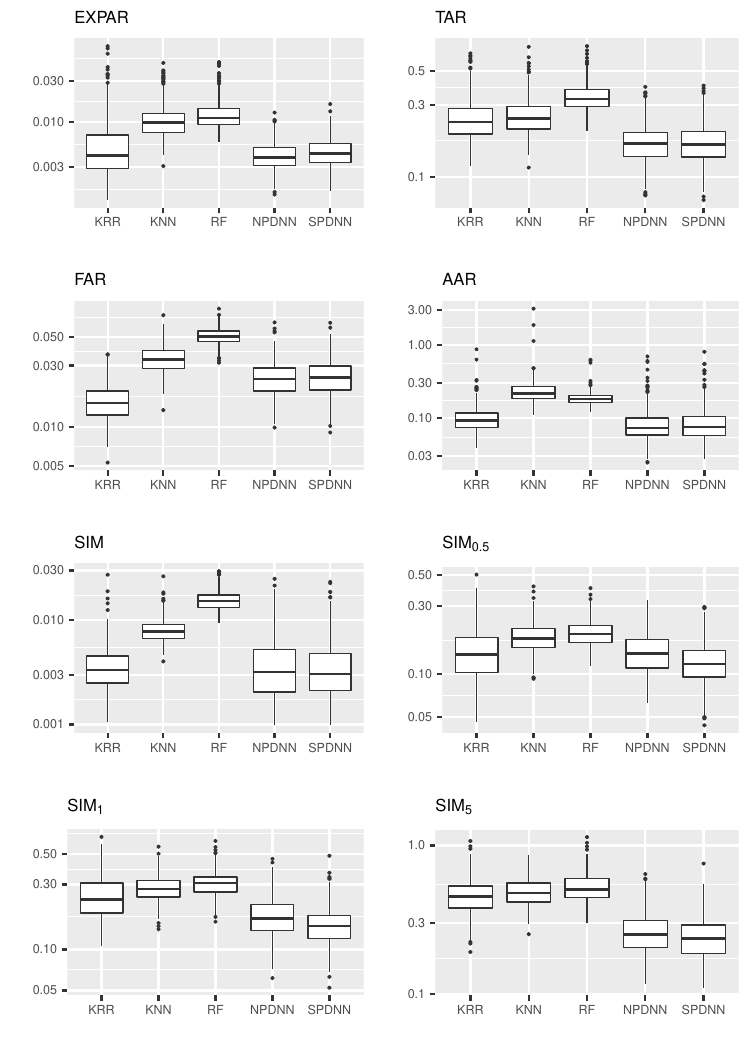}
\end{center}
\caption{Boxplots of empirical $L_2$ errors}\label{fig:sim}
\end{figure}

\section{Concluding remarks}\label{sec: conclusion}
In this paper, we have advanced statistical theory of feed-forward deep neural networks (DNN) for dependent data. For this, we investigated statistical properties of DNN estimators for nonparametric estimation of the mean function of a nonstationary and nonlinear time series. We established generalization error bounds of both non-penalized and sparse-penalized DNN estimators and showed that the sparse-penalized DNN estimators can estimate the mean functions of a wide class of the nonlinear autoregressive (AR) models adaptively and attain the minimax optimal convergence rates up to a logarithmic factor. The class of nonlinear AR models covers nonlinear generalized additive AR, single index models, and popular nonlinear AR models with discontinuous mean functions such as multi-regime threshold AR models. 

It would be possible to extend the results in Section \ref{sec:ar-model} to other function classes such as piecewise smooth functions \citep{ImFu19}, functions with low intrinsic dimensions \citep{Sc19, NaIm20}, and functions with varying smoothness \citep{Su19, SuNi21}. We leave such extensions as future research.

\newpage

\appendix

\section{Proofs for Section \ref{sec:main}}\label{Appendix: proof}

For random elements $X$ and $Y$, we write $X \stackrel{\mathcal{L}}{=} Y$ if they have the same law. Let $\mathcal{F}$ be a pointwise measurable class of real-valued functions on $\mathbb{R}^{d}$. For $\delta>0$, a finite set $\mathcal{G} \subset \mathcal{F}$ is called a \textit{$\delta$-covering} of $\mathcal{F}$ with respect to $\|\cdot \|_{\infty}$ if for any $f \in \mathcal{F}$ there exists $g \in \mathcal{G}$ such that $\|f-g\|_{\infty} \leq \delta$. The minimum cardinality of a $\delta$-covering of $\mathcal{F}$ with respect to $\|\cdot\|_{\infty}$ is called the \textit{covering number} of $\mathcal{F}$ with respect to $\|\cdot\|_{\infty}$ and denoted by $N(\delta, \mathcal{F}, \|\cdot\|_{\infty})$. Let $\sigma(Z)$ be the $\sigma$-field generated by the random element $Z$. Let $\hat{f}_{T}$ be an estimator taking values in $\mathcal{F}$ and define its expected empirical error by 
\begin{align*}
\widehat{R}(\hat{f}_{T},f_0) = \Ep\left[{1 \over T}\sum_{t=1}^{T}(\hat{f}_{T}(X_t) - f_0(X_t))^2\right]. 
\end{align*}
In what follows, we set $\beta(t) = \beta_X(t)$.

\subsection{Proof of Theorem \ref{thm: pred-error-bound}}\label{Appendix: lemmas}
First, we give an overview of the proof. To prove Theorem \ref{thm: pred-error-bound}, we will show the following results:  
\begin{lemma}[Lemma C.1]
Let $\delta>0$ and suppose that there exists an integer $\mathcal{N}_T$ such that $\mathcal{N}_T \geq N(\delta, \mathcal{F}, \|\cdot\|_{\infty}) \vee \exp(10)$. Also, let $a_T$ be a positive number such that $\mu_T := \lfloor T/(2a_T)\rfloor > 0$. In addition, suppose that there is a number $F \geq 1$ such that $\|f\|_{\infty} \leq F$ for all $f \in \mathcal{F} \cup \{f_0\}$. Then, for all $\varepsilon \in  (0, 1]$,
\begin{align*}
R(\hat{f}_{T},f_0) &\leq (1+\varepsilon)\widehat{R}(\hat{f}_{T},f_0) + {21(1+\varepsilon)^2 \over \varepsilon}F^2{\log \mathcal{N}_T \over \mu_T} + {4F^2 \over \mu_T} + 4(2+\varepsilon)F^2\beta(a_T) + 4(2+\varepsilon)F\delta.
\end{align*}
\end{lemma}
\begin{lemma}[Lemma C.2]
Let $\{(Y_t, X_t)\}_{t=1}^{T}$ be a time series satisfying (\ref{NLAR-model}), and set $f_0 := m\mathbf{1}_{[0,1]^{d}}$. Also, let $\delta > 0$ and assume $\mathcal{N}_T := N(\delta, \mathcal{F}, \|\cdot\|_{\infty})<\infty$. Suppose that there is a number $F \geq 1$ such that $\|f\|_{\infty} \leq F$ for all $f \in \mathcal{F} \cup \{f_0\}$. Suppose also that $\supp(f) \subset [0, 1]^{d}$ for all $f \in \mathcal{F}$. Then, under Assumption \ref{Ass: model}, for all $\varepsilon \in (0, 1)$ there exists a constant $C_{ \varepsilon}$ depending only on $(C_\eta, \varepsilon,K)$ such that
\begin{align*}
\widehat{R}(\hat{f}_{T},f_0) &\leq {1 \over 1-\varepsilon}\Psi_T^{\mathcal{F}}(\hat{f}_{T}) + {1 \over 1-\varepsilon}\inf_{f \in \mathcal{F}}R(f,f_0) + C_{\varepsilon}F^2\gamma_{\delta,T},
\end{align*}
where 
\[
\gamma_{\delta,T} := \delta +  {(\log T)(\log \mathcal{N}_T) \over T} + {1 \over T}. 
\]
\end{lemma}
Combining these results, we have
\begin{align}
R(\widehat{f}_T,f_0) &\leq {1+\varepsilon \over 1 - \varepsilon}\left(\Psi^{\mathcal{F}}(\widehat{f}_T) + \inf_{f \in \mathcal{F}}R(f, f_0)\right) \nonumber \\
&\quad + C_{\varepsilon}(1 + \varepsilon)F^2\left(\delta + {(\log T)(\log \mathcal{N}_T) \over T} + {1 \over T}\right) + {21(1+ \varepsilon)^2F^2 \over \varepsilon}{\log \mathcal{N}_T \over \mu_T} + {4F^2 \over \mu_T} \nonumber \\
&\quad + 4(2 +\varepsilon)F^2\beta(a_T) + 4(2 + \varepsilon)F\delta, \label{Lem:C1C2}
\end{align}
where $C_\varepsilon$ is a constant depending only on $(\varepsilon, C_\eta, K)$.
Lemma C.1 provides a bound of the generalization error using the expected empirical error and Lemma C.2 provides a bound of the expected empirical error. Note that the results do not require the estimator $\hat{f}_{T}$ to take values in $\mathcal{F}_{\sigma}(L,N,B,F,S)$ and hence would be of independent interest. Lemma C.1 can be shown as follows: Firstly, we apply the blocking technique for $\beta$-mixing sequences in \cite{Ri13} to approximate the data with independent blocks. Then, we employ an approach similar to Lemma 4(I) in \cite{Sc20} for these independent blocks. In this bound, what distinguishes it from independent data is that the second and third terms come from the blocking argument, while the fourth term depends on the $\beta$-mixing coefficient. When the data is independent, corresponding to $\mu_T=T$ and $\beta(a_T)=0$, this bound corresponds to Lemma 4(I) in \cite{Sc20}. Lemma C.2 corresponds to Lemma 4(III) in \cite{Sc20}. Although the bound in Lemma C.2 does not explicitly manifest the time-series structure, a key aspect of the proof involves establishing a new exponential inequality for self-normalized martingale differences (Lemma D.3) and using this result to evaluate $\Ep[T^{-1}\sum_{t=1}^{T}(\hat{f}_T(X_t) - f_0(X_t))\eta(X_t)v_t]$. This approach differs from \cite{Sc20} and constitutes a unique aspect of the proof. Theorem \ref{thm: pred-error-bound} follows from these lemmas and the bound on the $T^{-1}$-covering number of $\mathcal{F}_\sigma(L,N,B,F,S)$ with respect to $\|\cdot\|_\infty$.

Now we move on to the proof of Theorem \ref{thm: pred-error-bound}. Letting $\mathcal F=\mathcal{F}_{\sigma}(L,N,B,F,S)$, $\delta = T^{-1}$ and $a_T = C_{2,\beta}^{-1}\log T$ in (\ref{Lem:C1C2}), we have
\begin{align*}
R(\hat{f}_T,f_0) &\leq {1+\varepsilon \over 1-\varepsilon}\Psi_T^{\mathcal{F}}(\hat{f}_T) + {1+\varepsilon \over 1-\varepsilon}\inf_{f \in \mathcal{F}}R(f, f_0) + \left({21(1+\varepsilon)^2 \over \varepsilon}\log \mathcal{N}_T + 4\right){F^2 \over \mu_T}\\ 
&\quad + \left(2C_\varepsilon(1+\varepsilon) + 8(2+\varepsilon)\right){F^2 \over T} + C_{\varepsilon}(1+\varepsilon)F^2{(\log T)(\log \mathcal{N}_T) \over T}.
\end{align*}
We may assume $\mu_T = \lfloor {T \over 2a_T} \rfloor >0$; otherwise $T<2a_T$ and thus $C_{2,\beta}/2 < {\log T \over T}$. Moreover, using Lemma E.5, $\log N\left(T^{-1}, \mathcal{F}_{\sigma}(L,N,B,F,S), \| \cdot \|_{\infty}\right) \leq 2S(L+1)\log \left((L+1)(N+1)BT\right)$. Therefore, letting $\mathcal{N}_T := N\left(T^{-1}, \mathcal{F}_{\sigma}(L,N,B,F,S), \| \cdot \|_{\infty}\right) \vee \exp(10)$, there exists a universal constant $c_1>0$ such that $\log \mathcal{N}_T \leq c_1S(L+1)\log \left((L+1)(N+1)BT\right)$. 
Combining all the estimates above, we obtain the desired result. \qed

\begin{remark}[Derivation of (\ref{beta-poly-bound})]
When the $\beta$-mixing coefficient decays polynomially fast, that is, $\beta(t)\leq C_\beta t^{-\alpha}$ for some $\alpha>0$, then letting $\delta=T^{-1}$ and $a_T = (T/\log \mathcal{N}_T)^{{1 \over \alpha+1}}$ in (\ref{Lem:C1C2}), we obtain (\ref{beta-poly-bound}). 
\end{remark}

\subsection{Proof of Theorem \ref{thm: pred-error-bound2}}

Throughout the proof, we set $\beta(t) = \beta_X(t)$. Without loss of generality, we may assume $(1+C_\tau)T^{-1}<1$; otherwise, the desired bound holds with $C=4(1+C_\tau)$ because $R(\hat f_T,f_0)\leq4F^2$. Then we have
\ben{\label{def:delta}
\delta_T:=T^{-1}+\tau_T(L_T+1)((N_T+1)B_T)^{L_T+1}\leq(1+C_\tau)T^{-1}<1.
}
Next, for $k \geq 0$ and $s>0$, we define $\mathcal{F}_{T,k,s} := \left\{f \in \mathcal{F}: 2^{k-1}s\mathbf{1}_{\{k \neq 0\}} \leq J_T(f) < 2^k s\right\}$, $\Omega_{T,k,s} := \left\{\hat{f}_T\in\mathcal{F}_{T,k,s}\right\}$. Note that $\Omega = \bigcup_{k=0}^{\infty}\Omega_{T,k,s}$ and $\Omega_{T,k_1,s} \cap \Omega_{T, k_2,s} = \emptyset$ for $k_1 \neq k_2$. For each $k$, set $\mathcal{N}_{k}  := N(\delta_T, \mathcal{F}_{T,k,s},\|\cdot\|_{\infty})$ and let $\{f^k_1,\dots, f^k_{\mathcal{N}_k}\}$ be a $\delta_T$-covering of $\mathcal{F}_{T,k,s}$ with respect to $\|\cdot\|_{\infty}$. By construction, we can define a random variable $J_k$ taking values in $\{1,\dots, \mathcal{N}_k\}$ such that $\|\hat{f}_{T} - f^k_{J_k}\|_{\infty} \leq \delta_T$ on $\Omega_{T,k,s}$.

\underline{Step 1} (Reduction to independence) Let $a_T$ be a positive number such that $2a_T<T$, $a_T \to \infty$ and $a_T/ \iota_\lambda(T) \to 0$ as $T \to \infty$, and set $\mu_T := \lfloor T/2a_T\rfloor > 0$. For $\ell = 0,\dots, \mu_T-1$, let $I_{1,\ell} = \{2\ell a_T+1, \dots, (2\ell+1) a_T \}$, $I_{2,\ell} = \{(2\ell+1)a_T+1, \dots, 2(\ell+1)a_T\}$. For each $k$, we define
\begin{align*}
\tilde{g}^k_{\ell} &:= (\tilde{g}^k_{1,\ell},\dots, \tilde{g}^k_{\mathcal{N}_k,\ell})' = \left(\sum_{t \in I_{1,\ell}}(f^k_1(X_t) - f_0(X_t))^2, \dots, \sum_{t \in I_{1,\ell}}(f^k_{\mathcal{N}_k}(X_t) - f_0(X_t))^2\right)',\\
\tilde{g}_{\ell}^{\ast,k} &:= (\tilde{g}_{1,\ell}^{\ast,k},\dots, \tilde{g}_{\mathcal{N}_k,\ell}^{\ast,k})' = \left(\sum_{t \in I_{1,\ell}}(f^k_1(X_t^{\ast}) - f_0(X_t^{\ast}))^2, \dots, \sum_{t \in I_{1,\ell}}(f^k_{\mathcal{N}_k}(X_t^{\ast}) - f_0(X_t^{\ast}))^2\right)'.
\end{align*}
From a similar argument in Step1 of the proof of Lemma C.1, we can show that there exist two sequences of independent $\mathbb{R}^{\mathcal{N}_k}$-valued random variables $\{g_{\ell}^k\}_{\ell = 0}^{\mu_T-1}$ and $\{g_{\ell}^{\ast,k}\}_{\ell = 0}^{\mu_T-1}$  such that for all $\ell = 0,\dots,\mu_T-1$, 
\begin{align*}
g^k_\ell &\stackrel{\mathcal{L}}{=} \tilde{g}^k_\ell,\ \Prob(g^k_\ell \neq \tilde{g}^k_\ell) \leq \beta(a_T),\quad g^{\ast, k}_\ell \stackrel{\mathcal{L}}{=} \tilde{g}^{\ast, k}_\ell,\ \Prob(g^{\ast, k}_\ell \neq \tilde{g}^{\ast, k}_\ell) \leq \beta(a_T).
\end{align*}
with $0\leq g^k_{j,\ell} \leq 4F^2a_T$ and $0\leq g^{\ast,k}_{j,\ell} \leq 4F^2a_T$ a.s., where $g^k_{j,\ell}$ and $g_{j,\ell}^{\ast,k}$ are the $j$-th components of $g^k_{\ell}$ and $g_{\ell}^{\ast,k}$, respectively. Further, let $k_{T,s}$ be an integer such that $\sum_{k \geq k_{T,s}+1}\Prob(\Omega_{T,k,s}) \leq 1/\mu_T$. For $\ell = 0,\dots, \mu_T-1$, we define
\begin{align*}
\tilde{\mathfrak{g}}_\ell &:= \left((\tilde{g}^{0}_\ell)',\dots, (\tilde{g}^{k_{T,s}}_\ell)' \right)',\ \tilde{\mathfrak{g}}^{\ast}_\ell := \left((\tilde{g}^{\ast, 0}_\ell)',\dots, (\tilde{g}^{\ast, k_{T,s}}_\ell)' \right)',\\
\mathfrak{g}_\ell &:= \left((g^{0}_\ell)',\dots, (g^{k_{T,s}}_\ell)' \right)',\ \mathfrak{g}^{\ast}_\ell := \left((g^{\ast, 0}_\ell)',\dots, (g^{\ast, k_{T,s}}_\ell)' \right)'. 
\end{align*}
We can also assume that for all $\ell = 0,\dots,\mu_T-1$, 
\begin{align*}
\mathfrak{g}_\ell &\stackrel{\mathcal{L}}{=} \tilde{\mathfrak{g}}_\ell,\ \Prob(\mathfrak{g}_\ell \neq \tilde{\mathfrak{g}}_\ell) \leq \beta(a_T),\quad \mathfrak{g}^{\ast}_\ell \stackrel{\mathcal{L}}{=} \tilde{\mathfrak{g}}^{\ast}_\ell,\ \Prob(\mathfrak{g}^{\ast}_\ell \neq \tilde{\mathfrak{g}}^{\ast}_\ell) \leq \beta(a_T).
\end{align*}
Likewise, define
\begin{align*}
\tilde{h}^k_{\ell} &:= (\tilde{h}^k_{1,\ell},\dots, \tilde{h}^k_{\mathcal{N}_k,\ell}) = \left(\sum_{t \in I_{2,\ell}}(f^k_1(X_t) - f_0(X_t))^2, \dots, \sum_{t \in I_{2,\ell}}(f^k_{\mathcal{N}_k}(X_t) - f_0(X_t))^2\right)',\\
\tilde{h}_{\ell}^{\ast,k} &:= (\tilde{h}_{1,\ell}^{\ast,k},\dots, \tilde{h}_{\mathcal{N}_k,\ell}^{\ast,k}) = \left(\sum_{t \in I_{2,\ell}}(f^k_1(X_t^{\ast}) - f_0(X_t^{\ast}))^2, \dots, \sum_{t \in I_{2,\ell}}(f^k_{\mathcal{N}_k}(X_t^{\ast}) - f_0(X_t^{\ast}))^2\right)'
\end{align*}
and there exist two sequences of independent $\mathbb{R}^{\mathcal{N}_k}$-valued random variables $\{h_{\ell}^k\}_{\ell = 0}^{\mu_T-1}$ and $\{h_{\ell}^{\ast,k}\}_{\ell = 0}^{\mu_T-1}$  such that for all $\ell = 0,\dots,\mu_T-1$, 
\begin{align*}
h^k_\ell &\stackrel{\mathcal{L}}{=} \tilde{h}^k_\ell,\ \Prob(h^k_\ell \neq \tilde{h}^k_\ell) \leq \beta(a_T),\quad h^{\ast, k}_\ell \stackrel{\mathcal{L}}{=} \tilde{h}^{\ast, k}_\ell,\ \Prob(h^{\ast, k}_\ell \neq \tilde{h}^{\ast, k}_\ell) \leq \beta(a_T).
\end{align*}
For $\ell = 0,\dots, \mu_T-1$, we define
\begin{align*}
\tilde{\mathfrak{h}}_\ell &:= \left((\tilde{h}^{0}_\ell)',\dots, (\tilde{h}^{k_{T,s}}_\ell)' \right)',\ \tilde{\mathfrak{h}}^{\ast}_\ell := \left((\tilde{h}^{\ast, 0}_\ell)',\dots, (\tilde{h}^{\ast, k_{T,s}}_\ell)' \right)',\\
\mathfrak{h}_\ell &:= \left((h^{0}_\ell)',\dots, (h^{k_{T,s}}_\ell)' \right)',\ \mathfrak{h}^{\ast}_\ell := \left((h^{\ast, 0}_\ell)',\dots, (h^{\ast, k_{T,s}}_\ell)' \right)'. 
\end{align*}
We can also assume that for all $\ell = 0,\dots,\mu_T-1$, 
\begin{align*}
\mathfrak{h}_\ell &\stackrel{\mathcal{L}}{=} \tilde{\mathfrak{h}}_\ell,\ \Prob(\mathfrak{h}_\ell \neq \tilde{\mathfrak{h}}_\ell) \leq \beta(a_T),\quad \mathfrak{h}^{\ast}_\ell \stackrel{\mathcal{L}}{=} \tilde{\mathfrak{h}}^{\ast}_\ell,\ \Prob(\mathfrak{h}^{\ast}_\ell \neq \tilde{\mathfrak{h}}^{\ast}_\ell) \leq \beta(a_T).
\end{align*}

\underline{Step 2} (Bounding the generalization error) In this step, we will show
\begin{align}\label{pred-error-bound-adaptive}
R(\hat{f}_T,f_0) & \leq 3\left(\hat{R}(\hat{f}_T,f_0) + {1 \over 3}\Ep[J_T(\hat{f}_T)]\right) 
+ {32F^2a_T \over T}\left(1 + {2^{5/4}\exp\left(-{sT \over 128F^2a_T}\right) \over 1-\exp\left(-{sT \over 256F^2a_T}\right)}\right) \nonumber \\ 
&\quad \quad +24F\delta_T + {s \over 4} + {32F^2 \over \mu_T} + 48F^2\beta(a_T) 
\end{align}
for $T\geq T_0$, where $T_0>0$ is a constant depending only on ($C_L$, $C_N$, $C_B$, $\nu_0$, $\nu_1$, $\nu_2$, $\iota_\lambda$). 
Define
\begin{align*}
D &:= R(\hat{f}_T, f_0) - \hat{R}(\hat{f}_T, f_0) - {1 \over 2}\Ep[J_T(\hat{f}_T)], \ D_k := \Ep\left[\mathbf{1}_{\Omega_{T,k,s}}\left({1 \over T}\sum_{t=1}^{T}\Delta_t(\hat{f}_T)-{1 \over 2}J_T(\hat{f}_T)\right)\right],
\end{align*}
where $\Delta_t(f) = (f(X_t^{\ast}) - f_0(X_t^{\ast}))^2 - (f(X_t) - f_0(X_t))^2$. Note that $D = \sum_{k=0}^{\infty}D_k$. Since $|\Delta_t(\hat{f}_T) - \Delta_t(f_{J_k}^k)| \leq 8F\delta_T$, we have
\begin{align*}
D_k &\leq \Ep\left[\mathbf{1}_{\Omega_{T,k,s}}\left({1 \over T}\sum_{t=1}^{T}\Delta_t(f_{J_k}^k) - {1 \over 2}J_T(\hat{f}_T)\right)\right] +  \Ep\left[\mathbf{1}_{\Omega_{T,k,s}}\left({1 \over T}\sum_{t=1}^{T}\left|\Delta_t(\hat{f}_T) - \Delta_t(f_{J_k}^k)\right| \right)\right]\\
&\leq \Ep\left[\mathbf{1}_{\Omega_{T,k,s}}\left({1 \over T}\sum_{t=1}^{T}\Delta_t(f_{J_k}^k) - {1 \over 2}J_T(\hat{f}_T)\right)\right] + 8F\delta_T\Prob(\Omega_{T,k,s}). 
\end{align*}
Observe that 
\begin{align*}
&\Ep\left[\mathbf{1}_{\Omega_{T,k,s}}\left({1 \over T}\sum_{t=1}^{T}\Delta_t(f_{J_k}^k)\right)\right]\\ 
&\leq \Ep\left[\mathbf{1}_{\Omega_{T,k,s}}{1 \over T}\!\left(\sum_{\ell=0}^{\mu_T-1}\tilde{g}_{J_k,\ell}^{\ast,k} +\!\! \sum_{\ell=0}^{\mu_T-1}\tilde{h}_{J_k,\ell}^{\ast,k} - \!\! \sum_{\ell=0}^{\mu_T-1}\tilde{g}_{J_k,\ell}^{k} - \!\! \sum_{\ell=0}^{\mu_T-1}\tilde{h}_{J_k,\ell}^{k}\right)\! \right] \!+\! \Ep\left[\mathbf{1}_{\Omega_{T,k,s}}\!\left(\!{1 \over T}\sum_{t=2a_T\mu_T+1}^{T}\!\!\!\!\!\!\!\! |\Delta_t(f^k_{J_k})|\! \right)\right]\\
&\leq \Ep\left[\mathbf{1}_{\Omega_{T,k,s}}{1 \over T}\!\left(\sum_{\ell=0}^{\mu_T-1}\tilde{g}_{J_k,\ell}^{\ast,k} +\!\! \sum_{\ell=0}^{\mu_T-1}\tilde{h}_{J_k,\ell}^{\ast,k} - \!\! \sum_{\ell=0}^{\mu_T-1}\tilde{g}_{J_k,\ell}^{k} -\!\! \sum_{\ell=0}^{\mu_T-1}\tilde{h}_{J_k,\ell}^{k}\right)\! \right] + {8F^2 \over \mu_T}\Prob(\Omega_{T,k,s}). 
\end{align*}
Further, 
\begin{align}\label{g-star-block-bound}
&{1 \over T}\sum_{\ell=0}^{\mu_T-1}\sum_{k=0}^{\infty}\Ep\left[\mathbf{1}_{\Omega_{T,k,s}}\left|\tilde{g}_{J_k,\ell}^{\ast,k} - g_{J_k, \ell}^{\ast,k}\right|  \right] \leq {4F^2a_T \over T}\sum_{\ell=0}^{\mu_T-1}\sum_{k=0}^{\infty}\Ep\left[\mathbf{1}_{\Omega_{T,k,s}}\mathbf{1}_{\left\{\tilde{g}_{J_k,\ell}^{\ast,k} \neq g_{J_k, \ell}^{\ast,k}\right\}}  \right] \nonumber \\
&\leq {4F^2a_T \over T}\sum_{\ell=0}^{\mu_T-1}\sum_{k=0}^{\infty}\Ep\left[\mathbf{1}_{\Omega_{T,k,s}}\mathbf{1}_{\left\{\mathfrak{\tilde{g}}_{\ell}^{\ast} \neq \mathfrak{g}_{\ell}^{\ast}\right\}}  \right] \leq {4F^2a_T \over T}\sum_{\ell=0}^{\mu_T-1}\! \left(\sum_{k=0}^{k_{T,s}}\Ep\left[\mathbf{1}_{\Omega_{T,k,s}}\mathbf{1}_{\left\{\mathfrak{\tilde{g}}_{\ell}^{\ast} \neq \mathfrak{g}_{\ell}^{\ast}\right\}}  \right] + \sum_{k = k_{T,s}+1}^{\infty}\!\!\!\!\! \Ep\left[\mathbf{1}_{\Omega_{T,k,s}}  \right] \! \right) \nonumber \\
&\leq {4F^2a_T \over T}\sum_{\ell=0}^{\mu_T-1}\! \left(\Ep\left[\mathbf{1}_{\left\{\mathfrak{\tilde{g}}_{\ell}^{\ast} \neq \mathfrak{g}_{\ell}^{\ast}\right\}}  \right] \!+\!\! \sum_{k = k_{T,s}+1}^{\infty}\!\!\!\!\! \Ep\left[\mathbf{1}_{\Omega_{T,k,s}}  \right] \! \right) \leq {4F^2a_T \over T}\cdot \mu_T\left(\beta(a_T)+\frac{1}{\mu_T}\right) \leq 2F^2\left(\beta(a_T)+\frac{1}{\mu_T}\right).
\end{align}
Therefore, similar arguments to obtain (\ref{g-star-block-bound}) yield
\begin{align*}
&\sum_{k=0}^{\infty}\Ep\left[\mathbf{1}_{\Omega_{T,k,s}}\left({1 \over T}\sum_{t=1}^{T}\Delta_t(f_{J_k}^k) - {1 \over 2}J_T(\hat{f}_T)\right)\right]\\ 
&\leq \sum_{k=0}^{\infty}\! \Ep \! \left[\! \mathbf{1}_{\Omega_{T,k,s}}\!\! \left(\!{1 \over T}\!\! \sum_{\ell=0}^{\mu_T-1}\!\!\!(g_{J_k,\ell}^{\ast,k} \!-\! g_{J_k,\ell}^k) \!-\! {1 \over 4}J_T(\hat{f}_T)\! \right) \! \right] \\
&\quad \!+\! \sum_{k=0}^{\infty}\! \Ep \! \left[\! \mathbf{1}_{\Omega_{T,k,s}}\! \left(\!{1 \over T}\!\! \sum_{\ell=0}^{\mu_T-1}\!\!\! (h_{J_k,\ell}^{\ast,k} \!-\! h_{J_k,\ell}^k) \!-\! {1 \over 4}J_T(\hat{f}_T)\!\right)\! \right] \!+\! {16F^2 \over \mu_T} \!+\! 8F^2\beta(a_T). 
\end{align*}
Define $b_{j,k} := \mathbf{1}_{\Omega_{T,k,s}}\sum_{\ell=0}^{\mu_T-1}(g_{j,\ell}^{\ast,k} - g_{j,\ell}^k)^2$, $\bar b_k:=\Ep\left[b_{J_k,k}\right]$, $r_{j,k}:=2\sqrt{b_{j,k}+\bar b_k}$, \\$B_{j,k} := \mathbf{1}_{\Omega_{T,k,s}}\left|\sum_{\ell=0}^{\mu_T-1}(g_{j,\ell}^{\ast,k} - g_{j,\ell}^k)/ r_{j,k}\right|$, $B_k:=B_{J_k,k}$, where $B_{j,k}:=0$ if the denominator equals $0$. Then we have
\begin{align*}
\Ep\left[\mathbf{1}_{\Omega_{T,k,s}}\left({1 \over T}\sum_{\ell=0}^{\mu_T-1}(g_{J_k,\ell}^{\ast,k} - g_{J_k,\ell}^k) - {1 \over 4}J_T(\hat{f}_T)\right) \right] &\leq \Ep\left[\left({1 \over T}r_{J_k,k}B_k - \mathbf{1}_{\Omega_{T,k,s}}{1 \over 4}J_T(\hat{f}_T)\right)\right].
\end{align*}
Applying the AM-GM inequality, we have
\begin{align*}
&\Ep \!\left[\! \mathbf{1}_{\Omega_{T,k,s}} \!\! \left({1 \over T} \!\sum_{\ell=0}^{\mu_T-1}\!\! (g_{J_k,\ell}^{\ast,k} - g_{J_k,\ell}^k) \!- \! {1 \over 4}J_T(\hat{f}_T) \!\right) \right] \leq \Ep\left[{r^2_{J_k,k} \over 64F^2Ta_T}\right] + \Ep\left[{16F^2a_T \over T}B_k^2 - \mathbf{1}_{\Omega_{T,k,s}}{1 \over 4}J_T(\hat{f}_T)\right]\\
&={8 \over 64F^2Ta_T}\Ep\left[b_{J_k,k}\right]
\!+\!  \Ep \!\left[\!{16F^2a_T \over T}B_k^2 \!-\! \mathbf{1}_{\Omega_{T,k,s}}{1 \over 4}J_T(\hat{f}_T)\right] =: I_{r,k} + I_{B,k}.
\end{align*}
Now we evaluate $I_{r,k}$. Observe that
\begin{align}
&\Ep[b_{J_k,k}] \nonumber \\
&\leq 4F^2a_T\Ep\left[\mathbf{1}_{\Omega_{T,k,s}}\sum_{\ell=0}^{\mu_T-1}\left|g_{J_k,\ell}^{\ast,k} - g_{J_k,\ell}^k\right|\right]  \nonumber \\
&\leq 4F^2a_T\Ep\left[\mathbf{1}_{\Omega_{T,k,s}}\sum_{\ell=0}^{\mu_T-1}\left|\tilde{g}_{J_k,\ell}^{\ast,k} - \tilde{g}_{J_k,\ell}^k\right|\right] + 4F^2a_T\Ep\left[\mathbf{1}_{\Omega_{T,k,s}}\sum_{\ell=0}^{\mu_T-1}\left(\left|\tilde{g}_{J_k,\ell}^{\ast,k} - g_{J_k,\ell}^{\ast, k}\right|+\left|\tilde{g}_{J_k,\ell}^k - g_{J_k,\ell}^k\right|\right)\right] \nonumber \\
&\leq 4F^2a_T\left(\Ep\left[\mathbf{1}_{\Omega_{T,k,s}}\sum_{\ell=0}^{\mu_T-1}\!\!\sum_{t \in I_{1,\ell}}(\hat{f}_T(X_t^{\ast}) - f_0(X_t^{\ast}))^2\right] + \Ep\left[\mathbf{1}_{\Omega_{T,k,s}}\sum_{\ell=0}^{\mu_T-1}\sum_{t \in I_{1,\ell}}(\hat{f}_T(X_t) - f_0(X_t))^2\right]\right) \nonumber \\
&\ +\! 4F^2a_T\!\left(\!\Ep \! \left[\! \mathbf{1}_{\Omega_{T,k,s}}\!\!\!\!\sum_{\ell=0}^{\mu_T-1}\!\left|\tilde{g}_{J_k,\ell}^{\ast,k}\!-\!\!\sum_{t \in I_{1,\ell}}\!\!\!\!(\hat{f}_T(X_t^{\ast}) \!-\! f_0(X_t^{\ast})\!)^2\right|\right]  \!+\! \Ep \! \left[\! \mathbf{1}_{\Omega_{T,k,s}}\!\!\!\! \sum_{\ell=0}^{\mu_T-1}\! \left|\tilde{g}_{J_k,\ell}^k \!-\!\! \sum_{t \in I_{1,\ell}}\!\!\!\!(\hat{f}_T(X_t) \!-\! f_0(X_t)\!)^2\right|\right]\! \right) \nonumber \\
&\ +\! 4F^2a_T \cdot 8F^2a_T\sum_{\ell=0}^{\mu_T-1}\Ep\left[\mathbf{1}_{\Omega_{T,k,s}}\left(\mathbf{1}_{\left\{\tilde{g}_{J_k,\ell}^{\ast,k} \neq g_{J_k,\ell}^{\ast, k}\right\}}+\mathbf{1}_{\left\{\tilde{g}_{J_k,\ell}^k \neq g_{J_k,\ell}^k\right\}}\right)\right] \nonumber\\
&\leq 4F^2a_T\Ep\left[\mathbf{1}_{\Omega_{T,k,s}}\sum_{\ell=0}^{\mu_T-1}\!\!\sum_{t \in I_{1,\ell}}(\hat{f}_T(X_t^{\ast}) - f_0(X_t^{\ast}))^2\right]  + 4F^2a_T\Ep\left[\mathbf{1}_{\Omega_{T,k,s}}\sum_{\ell=0}^{\mu_T-1}\!\! \sum_{t \in I_{1,\ell}}(\hat{f}_T(X_t) - f_0(X_t))^2\right] \nonumber \\
&\ +\! 4F^2\!a_T \! \left(4Fa_T\mu_T\delta_T \!+\! 4Fa_T\mu_T\delta_T\right)\! \Prob(\Omega_{T,k,s}) \!+\! 32F^4\!a_T^2 \!\!\! \sum_{\ell=0}^{\mu_T-1}\!\!\! \Ep \!\left[\!\mathbf{1}_{\Omega_{T,k,s}}\!\! \left(\! \mathbf{1}_{\left\{\tilde{g}_{J_k,\ell}^{\ast,k} \neq g_{J_k,\ell}^{\ast, k}\right\}} \!\!+\! \mathbf{1}_{\left\{\tilde{g}_{J_k,\ell}^k \neq g_{J_k,\ell}^k\right\}} \! \right)\!\right]. \label{r-g-bound1}
\end{align}
For the above inequalities, we used the fact that $\left|\tilde{g}_{J_k,\ell}^{\ast,k} - g_{J_k,\ell}^{\ast, k}\right| \leq 4F^2a_T$, $\left|\tilde{g}_{J_k,\ell}^k - g_{J_k,\ell}^k\right| \leq 4F^2a_T$ a.s. and on $\Omega_{T,k,s}$, $\hat{f}_T \in \mathcal{F}_{T,k,s}$ and 
\begin{align*}
\left|(f^k_{J_k}(x) - f_0(x))^2 - (\hat{f}_T(x) - f_0(x))^2\right| &\leq \left|\hat{f}_T(x) - f^k_{J_k}(x)\right| \left|\hat{f}_T(x) + f^k_{J_k}(x) - 2f_0(x)\right| \leq 4F\delta_T. 
\end{align*}
Hence we obtain
\begin{align}\label{r-g-k-sum}
\sum_{k=0}^{\infty}I_{r,k} 
&\leq {4F^2a_T \over 8F^2Ta_T}\left(\Ep\left[\sum_{\ell=0}^{\mu_T-1}\sum_{t \in I_{1,\ell}}(\hat{f}_T(X_t^{\ast}) - f_0(X_t^{\ast}))^2\right] + \Ep\left[\sum_{\ell=0}^{\mu_T-1}\sum_{t \in I_{1,\ell}}(\hat{f}_T(X_t) - f_0(X_t))^2\right]\right) \nonumber \\
&\quad + {32F^3a_T^2\mu_T\delta_T \over 8F^2Ta_T} + {32F^4\cdot 4a_T^2\mu_T\beta(a_T) \over 8F^2Ta_T} \nonumber \\
&= {1 \over 2}\left(\Ep\left[{1 \over T}\sum_{\ell=0}^{\mu_T-1}\sum_{t \in I_{1,\ell}}(\hat{f}_T(X_t^{\ast}) - f_0(X_t^{\ast}))^2\right] + \Ep\left[{1 \over T}\sum_{\ell=0}^{\mu_T-1}\sum_{t \in I_{1,\ell}}(\hat{f}_T(X_t) - f_0(X_t))^2\right]\right) \nonumber \\
&\quad + 2F\delta_T + 8F^2\beta(a_T). 
\end{align}

Now we evaluate $I_{B,k}$. 
If $\bar b_k>0$, applying Lemmas E.2 and E.4 with $y=\sqrt{\bar b_k}$, we obtain $\Ep\left[\exp\left(2B_{j,k}^2\right)\sqrt{\bar b_k}/\sqrt{b_{j,k}+\bar b_k}\right]\leq1$.
Hence
\ba{
\Ep\left[\exp\left(2B_{J_k,k}^2\right)\sqrt{\bar b_k}/\sqrt{b_{J_k,k}+\bar b_k}\right]
\leq\Ep\left[\max_{1\leq j\leq \mathcal N_k}\exp\left(2B_{j,k}^2\right)\sqrt{\bar b_k}/\sqrt{b_{j,k}+\bar b_k}\right]\leq\mathcal N_k.
}
Therefore, by the Cauchy-Schwarz inequality, we obtain
\ba{
\E[\exp(B_k^2)]
\! \leq \! \sqrt{\Ep\! \left[\exp\left(\! 2B_{J_k,k}^2 \!\right)\!\! \sqrt{\bar b_k}/\! \sqrt{b_{J_k,k}+\bar b_k}\right]\! \Ep\!\left[\!\sqrt{b_{J_k,k} + \bar b_k}/\sqrt {\bar b_k}\right]}
\! \leq \! \sqrt{\mathcal N_k\Ep \! \left[\! \sqrt{b_{J_k,k} + \bar b_k}/\! \sqrt {\bar b_k}\right]}.
}
Since $\Ep\left[\sqrt{b_{J_k,k} + \bar b_k}/\sqrt {\bar b_k}\right]
\leq\sqrt{\Ep\left[(b_{J_k,k} + \bar b_k)/\bar b_k\right]}
\leq\sqrt 2$, we conclude
\ben{\label{Bk-bound}
\E[\exp(B_k^2)]\leq2^{1/4}\sqrt{\mathcal N_k}.
}
This inequality also holds when $\bar b_k=0$ because we always have $B_k=0$ in such a case. 
Thus, for $k\geq1$, we have
\begin{align}\label{B-k-bound1}
&\Ep\left[{16F^2a_T \over T}B_{k}^2 - \mathbf{1}_{\Omega_{T,k,s}}{1 \over 4}J_T(\hat{f}_T)\right] \leq \int_{0}^{\infty}\Prob\left({16F^2a_T \over T}B_{k}^2 - \mathbf{1}_{\Omega_{T,k,s}}{1 \over 4}J_T(\hat{f}_T) >x\right)dx \nonumber \\
&\leq 2^{1/4}\sqrt{\mathcal{N}_k}\int_{0}^{\infty}\exp\left(-{T(x + 2^{k-3}s) \over 16F^2a_T}\right)dx = 2^{1/4}\sqrt{\mathcal{N}_k} \cdot {16F^2a_T \over T}\exp\left(-{2^{k-3}sT \over 16F^2a_T}\right), 
\end{align}
where the second inequality follows from Markov's inequality and \eqref{Bk-bound}. Recall that $\delta_T$ is defined by \eqref{def:delta}. 
Then by Lemma E.6,
\begin{equation}\label{eq:n-est}
\log \mathcal{N}_k \leq {2^k s \over \lambda_T}(L_T+1)\log \left({(L_T+1)(N_T + 1)B_T \over T^{-1}}\right)
\leq C_1{2^k s \over \lambda_T}\log^{1+\nu_0} T,
\end{equation}
where $C_1$ is a positive constant depending only on ($C_L$, $C_N$, $C_B$, $\nu_0$, $\nu_1$, $\nu_2$). Since $a_T/\iota_\lambda(T)\to 0$ as $T \to\infty$, there is a constant $T_0$ depending only on $C_1$ and $\iota_\lambda$ such that $(C_1\log^{1+\nu_0} T)/\lambda_T \leq T/(128F^2a_T)$ whenever $T\geq T_0$. For such $T$, we have
\begin{align*}
2^{1/4}\sqrt{\mathcal{N}_k} \cdot {16F^2a_T \over T}\exp\left(-{2^{k-3}sT \over 16F^2a_T}\right) &\leq {2^{1/4}\cdot16F^2a_T \over T}\exp\left({2^ksT \over 256F^2a_T} - {2^ksT \over 128F^2a_T}\right)\\ 
&= {2^{1/4}\cdot16F^2a_T \over T}\exp\left(- {2^ksT \over 256F^2a_T}\right).
\end{align*}
For $k=0$, 
\begin{align}\label{B-k-bound2}
&\Ep\left[{16F^2a_T \over T}B_{0}^2 - \mathbf{1}_{\Omega_{T,0,s}}{1 \over 4}J_T(\hat{f}_T)\right] \leq {16F^2a_T \over T}\Ep[B_0^2] = {16F^2a_T \over T}\log (\exp(\Ep[B_0^2])) \nonumber \\
&\leq {16F^2a_T \over T}\log (\Ep[\exp(B_0^2)]) \leq {16F^2a_T \over T}\log (2^{1/4}\sqrt{\mathcal{N}_0}) \leq {8F^2a_T \over T}(1 + \log \mathcal{N}_0 ) \leq {8F^2a_T \over T} + {s \over 16}. 
\end{align}
For the second inequality, we used Jensen's inequality and for the last inequality, we used $\log \mathcal{N}_0 \leq s T/(128F^2a_T)$. Combining (\ref{B-k-bound1}) and (\ref{B-k-bound2}), we have
\begin{align}\label{B-g-k-sum}
\sum_{k=0}^{\infty}\! I_{B,k} 
&\leq {8F^2a_T \over T} \!+\! {s \over 16} \!+\! {2^{1/4} \!\cdot \!16F^2a_T \over T}\sum_{k=1}^{\infty}\exp\left(- {2^ksT \over 256F^2a_T}\right) \leq  {s \over 16} \!+\! {8F^2a_T \over T}\left(\!1 \!+\! {2^{5/4}\exp\left(-{sT \over 128F^2a_T}\right) \over 1- \exp\left(-{sT \over 256F^2a_T}\right)}\!\right).
\end{align}
Therefore, (\ref{r-g-k-sum}) and (\ref{B-g-k-sum}) yield
\begin{align*}
&\sum_{k=0}^{\infty}\Ep\left[\mathbf{1}_{\Omega_{T,k,s}}\left({1 \over T}\sum_{\ell=0}^{\mu_T-1}(g_{J_k,\ell}^{\ast,k} - g_{J_k,\ell}^k) - {1 \over 4}J_T(\hat{f}_T)\right) \right] \leq \sum_{k=0}^{\infty}(I_{r,k} + I_{B,k})\\
&\leq {1 \over 2}\left(\Ep\left[{1 \over T}\sum_{\ell=0}^{\mu_T-1}\sum_{t \in I_{1,\ell}}(\hat{f}_T(X_t^{\ast}) - f_0(X_t^{\ast}))^2\right] + \Ep\left[{1 \over T}\sum_{\ell=0}^{\mu_T-1}\sum_{t \in I_{1,\ell}}(\hat{f}_T(X_t) - f_0(X_t))^2\right]\right) \nonumber \\
&\quad + 2F\delta_T + 8F^2\beta(a_T) + {s \over 16} + {8F^2a_T \over T}\left(1 + {2^{5/4}\exp\left(-{sT \over 128F^2a_T}\right) \over 1- \exp\left(-{sT \over 256F^2a_T}\right)}\right).
\end{align*}
Likewise, we have
\begin{align*}
&\sum_{k=0}^{\infty}\Ep\left[\mathbf{1}_{\Omega_{T,k,s}}\left({1 \over T}\sum_{\ell=0}^{\mu_T-1}(h_{J_k,\ell}^{\ast,k} - h_{J_k,\ell}^k) - {1 \over 4}J_T(\hat{f}_T)\right) \right]\\ 
&\leq {1 \over 2}\left(\Ep\left[{1 \over T}\sum_{\ell=0}^{\mu_T-1}\sum_{t \in I_{2,\ell}}(\hat{f}_T(X_t^{\ast}) - f_0(X_t^{\ast}))^2\right] + \Ep\left[{1 \over T}\sum_{\ell=0}^{\mu_T-1}\sum_{t \in I_{2,\ell}}(\hat{f}_T(X_t) - f_0(X_t))^2\right]\right) \nonumber \\
&\quad + 2F\delta_T + 8F^2\beta(a_T) + {s \over 16} + {8F^2a_T \over T}\left(1 + {2^{5/4}\exp\left(-{sT \over 128F^2a_T}\right) \over 1- \exp\left(-{sT \over 256F^2a_T}\right)}\right).
\end{align*}
Hence, 
\begin{align*}
D &= \sum_{k=1}^{\infty}D_k \leq {1 \over 2}\left(R(\hat{f}_T,f_0) + \hat{R}(\hat{f}_T,f_0)\right) \nonumber \\
&\quad \quad \quad \quad \quad \!+\! 12F \delta_T \!+\! 16F^2\beta(a_T) \!+\! {s \over 8} \!+\! {16F^2a_T \over T}\left(1 \!+\! {2^{5/4}\exp\left(-{sT \over 128F^2a_T}\right) \over 1- \exp\left(-{sT \over 256F^2a_T}\right)}\right) \!+\! {16F^2 \over \mu_T} \!+\! 8F^2\beta(a_T). 
\end{align*}
Since $D = R(\hat{f}_T,f_0) - \hat{R}(\hat{f}_T,f_0) - {1 \over 2}\Ep[J_T(\hat{f}_T)]$, we obtain (\ref{pred-error-bound-adaptive}).  

\underline{Step 3} (Bounding the expected empirical error) In this step, we will show that for any $\bar{f} \in \mathcal{F}$, 
\begin{align}\label{emp-error-bound-adaptive}
\hat{R}(\hat{f}_T,f_0) + {1 \over 3}\Ep[J_T(\hat{f}_T)] 
&\leq 2\left(\bar{\Psi}_T(\hat{f}_T,\bar{f}) + R(\bar{f},f_0) + J_T(\bar{f})\right) + {22F^2 \over 3T(\log T)} \nonumber \\
&\quad + {96K^2C_\eta^2(\log T) \over T}\left(1 + {2^{5/4}\exp\left(-{5sT \over 1152K^2C_\eta^2(\log T)}\right) \over 1-\exp\left(-{5sT \over 2304K^2C_\eta^2(\log T)}\right)}\right) + 4C_\eta \delta_T + {5 \over 12}s
\end{align}
for $T\geq T_1$, where $\bar{\Psi}_T(\hat{f}_T,\bar{f}) = \Ep\left[\bar{Q}_T(\hat{f}_T) - \bar{Q}_T(\bar{f})\right]$ and $T_1>0$ is a constant depending only on ($C_L$, $C_N$, $C_B$, $\nu_0$, $\nu_1$, $\nu_2$, $K$, $C_\eta$, $\iota_\lambda$). 

For each $t=1,\dots,T$ and for any $\bar{f} \in \mathcal{F}$, we have
\begin{align*}
&\Ep[(\bar{f}(X_t) - Y_t)^2 + J_T(\bar{f})] - \Ep[(\hat{f}_T(X_t) - Y_t)^2 + J_T(\hat{f}_T)]\\
&= \Ep[(\bar{f}(X_t) - f_0(X_t))^2] + J_T(\bar{f}) - \Ep[(\hat{f}_T(X_t) - f_0(X_t))^2] - \Ep[J_T(\hat{f}_T)]\\ 
&\quad + 2\Ep[(\hat{f}_T(X_t) - f_0(X_t))\eta(X_t)v_t],
\end{align*}
where we used the fact $\Ep[\bar{f}(X_t)\eta(X_t)v_t] = \Ep[\bar{f}(X_t)\eta(X_t)\Ep[v_t|\mathcal{G}_{t-1}]] = 0$. Then we have
\begin{align}\label{emp-error-equation}
\hat{R}(\hat{f}_T,f_0) + {1 \over 6}\Ep[J_T(\hat{f}_T)] &= \bar{\Psi}_T(\hat{f}_T, \bar{f}) + R(\bar{f},f_0) + J_T(\bar{f}) - {5 \over 6}\Ep[J_T(\hat{f}_T)] \nonumber \\
&\quad + 2\Ep\left[{1 \over T}\sum_{t=1}^{T}(\hat{f}_T(X_t) - f_0(X_t))\eta(X_t)v_t\right].
\end{align}
Observe that 
\begin{align*}
&2\Ep\left[{1 \over T}\sum_{t=1}^{T}(\hat{f}_T(X_t) - f_0(X_t))\eta(X_t)v_t\right] = {2 \over T}\sum_{k=0}^{\infty}\Ep\left[\mathbf{1}_{\Omega_{T,k,s}}\sum_{t=1}^{T}(\hat{f}_T(X_t) - f_0(X_t))\eta(X_t)v_t\right]\\
&= {2 \over T}\!\sum_{k=0}^{\infty}\Ep\left[\mathbf{1}_{\Omega_{T,k,s}}\!\!\sum_{t=1}^{T}(f^k_{J_k}(X_t) - f_0(X_t))\eta(X_t)v_t\right] \!+\! {2 \over T}\sum_{k=0}^{\infty}\Ep\left[\mathbf{1}_{\Omega_{T,k,s}}\!\!\sum_{t=1}^{T}(\hat{f}_T(X_t) - f^k_{J_k}(X_t))\eta(X_t)v_t\right].
\end{align*}
Since $\sup_x|\hat{f}_T(x) - f^k_{J_k}(x)| \leq \delta_T$ on $\Omega_{T,k,s}$ and ${2C_\eta \delta_T\over T}\Ep\left[\sum_{t=1}^{T}|v_t|\right] \leq  {2C_\eta \delta_T\over T}\sum_{t=1}^{T}\sqrt{\Ep\left[v_t^2\right]} =  2C_\eta\delta_T$, we have
\begin{align}\label{error-resid-bound1}
2\Ep\left[{1 \over T}\sum_{t=1}^{T}(\hat{f}_T(X_t) - f_0(X_t))\eta(X_t)v_t\right] &\leq {2 \over T}\sum_{k=0}^{\infty}\Ep\left[\mathbf{1}_{\Omega_{T,k,s}}\sum_{t=1}^{T}(f^k_{J_k}(X_t) - f_0(X_t))\eta(X_t)v_t\right] + 2C_\eta\delta_T.
\end{align}
Define $\eta_{j,k} := \mathbf{1}_{\Omega_{T,k,s}}{U_{j,k} \over 2\sqrt{V_{j,k}^2 + \Ep[V_{J_k,k}^2]}}$, $\eta_k := \eta_{J_k,k}$, 
\begin{align*}
U_{j,k} &:= \sum_{t=1}^{T}(f_j^k(X_t) - f_0(X_t))\eta(X_t)v_t,\ V_{j,k} := \mathbf{1}_{\Omega_{T,k,s}}\left(\sum_{t=1}^{T}(f_j^k(X_t) - f_0(X_t))^2\eta^2(X_t)(v_t^2+1)\right)^{1/2},
\end{align*}
where $\eta_{j,k}:=0$ if the denominator equals $0$. Observe that
\begin{align}\label{error-resid-bound2}
{2 \over T}\Ep \! \left[\! \mathbf{1}_{\Omega_{T,k,s}}\!\!\sum_{t=1}^{T}\!(f^k_{J_k}(X_t) \!-\! f_0(X_t)\!)\eta(\!X_t \!)v_t \!\right] &\!\leq  {2 \over T}\Ep \! \left[\mathbf{1}_{\Omega_{T,k,s}}\!\! \left(2\sqrt{V_{J_k, k}^2 + \Ep[V_{J_k, k}^2]} \eta_k\right)\right] \nonumber \\
&\leq {1 \over 6K^2C_\eta^2 T(\log T)}\Ep\left[V_{J_k, k}^2\right] + {96K^2C_\eta^2(\log T) \over T}\Ep[\mathbf{1}_{\Omega_{T,k,s}}\eta_k^2]. 
\end{align}
Combining (\ref{error-resid-bound1}) and (\ref{error-resid-bound2}), we have
\begin{align}\label{IV-Ieta-bound}
&2\Ep\left[{1 \over T}\sum_{t=1}^{T}(\hat{f}_T(X_t) - f_0(X_t))\eta(X_t)v_t\right] - {5 \over 6}\Ep[J_T(\hat{f}_T)] \nonumber \\
&\leq \sum_{k=0}^{\infty}{1 \over 6K^2C_\eta^2 T(\log T)}\Ep\left[V_{J_k, k}^2\right]  
+ \sum_{k=0}^{\infty}\Ep\left[\mathbf{1}_{\Omega_{T,k,s}}\left({96K^2C_\eta^2(\log T) \over T}\eta_k^2 - {5 \over 6}J_T(\hat{f}_T)\right)\right] + 2C_\eta \delta_T\nonumber \\
&=: \sum_{k=0}^{\infty}I_{V,k} + \sum_{k=0}^{\infty}I_{\eta,k} + 2C_\eta \delta_T. 
\end{align}

Now we evaluate $I_{V,k}$. Observe that 
\begin{align*}
&\Ep\left[V_{J_k, k}^2 \right] \leq C_\eta^2\Ep\left[\mathbf{1}_{\Omega_{T,k,s}}\sum_{t=1}^{T}(f^k_{J_k}(X_t) - f_0(X_t))^2(v_t^2+1)\right]\\
&= C_\eta^2\Ep\left[\mathbf{1}_{\Omega_{T,k,s}}\sum_{t=1}^{T}(\hat{f}_T(X_t) - f_0(X_t))^2(v_t^2+1)\right] + C_\eta^2\Ep\left[\mathbf{1}_{\Omega_{T,k,s}}\sum_{t=1}^{T}(f^k_{J_k}(X_t) - \hat{f}_T(X_t))^2(v_t^2+1)\right]\\
&\quad + 2C_\eta^2\Ep\left[\mathbf{1}_{\Omega_{T,k,s}}\sum_{t=1}^{T}(\hat{f}_T(X_t) - f_0(X_t))(f^k_{J_k}(X_t) - \hat{f}_T(X_t))(v_t^2+1)\right]\\
&\leq C_\eta^2\Ep\left[\mathbf{1}_{\Omega_{T,k,s}}\sum_{t=1}^{T}(\hat{f}_T(X_t) - f_0(X_t))^2(v_t^2+1)\right]\\
&\quad + 2FC_\eta^2\delta_T\Ep\left[\mathbf{1}_{\Omega_{T,k,s}}\sum_{t=1}^{T}(v_t^2+1)\right] + 4FC_\eta^2 \delta_T\Ep\left[\mathbf{1}_{\Omega_{T,k,s}}\sum_{t=1}^{T}(v_t^2+1)\right]\\
&= C_\eta^2\Ep\left[\mathbf{1}_{\Omega_{T,k,s}}\sum_{t=1}^{T}(\hat{f}_T(X_t) - f_0(X_t))^2(v_t^2+1)\right] + 6FC_\eta^2 \delta_T\Ep\left[\mathbf{1}_{\Omega_{T,k,s}}\sum_{t=1}^{T}(v_t^2+1)\right].
\end{align*}
From the same arguments to obtain (C.16) in the proof of Lemma C.2, we have \\$C_\eta^2\Ep\left[\sum_{t=1}^{T}(\hat{f}_T(X_t) - f_0(X_t))^2v_t^2\right] \leq 2C_\eta^2K^2T(\log T)\widehat{R}(\hat{f}_T, f_0) + 16C_\eta^2F^2K^2$. Hence we have
\begin{align}\label{I-Vk-bound}
\sum_{k=0}^{\infty}I_{V,k} &\leq  {1 \over 6K^2C_\eta^2 T(\log T)}\left(C_\eta^2(2K^2T(\log T)+1)\widehat{R}(\hat{f}_T, f_0) + 16C_\eta^2F^2K^2 + 6FC_\eta^2T\delta_T\right) \nonumber \\
&\leq {1 \over 2}\hat{R}(\hat{f}_T,f_0) + {11F^2 \over 3(\log T)}\delta_T.
\end{align}
For the second inequality, we used the inequalities $T\delta_T\geq1$, $F\geq1$ and 
\ben{\label{K-est}
1=\max_{1\leq t\leq T}\Ep[v_t^2]\leq\max_{1\leq t\leq T}K_t^2(\Ep[\exp(v_t^2/K_t^2)]-1)\leq K.
} 

Now we evaluate $I_{\eta,k}$. 
If $\Ep[V_{J_k,k}^2]>0$, applying Lemmas E.3 and E.4 with $y=\sqrt{\Ep[V_{J_k,k}^2]}$, we obtain $\Ep\left[\exp\left(2\eta_{j,k}^2\right)\sqrt{\Ep[V_{J_k,k}^2]}/\sqrt{V_{j,k}^2+\Ep[V_{J_k,k}^2]}\right]\leq1$.
Hence
\ba{
\Ep \! \left[\exp \! \left(\!2\eta_{J_k,k}^2 \!\right)\!\!\sqrt{\Ep[V_{J_k,k}^2]}/\!\!\sqrt{V_{J_k,k}^2\!+\!\Ep[V_{J_k,k}^2]}\right]
\! \leq \! \Ep \! \left[ \! \max_{1\leq j\leq \mathcal N_k}\!\!\! \exp \! \left(\!2\eta_{j,k}^2 \!\right)\!\!\sqrt{\Ep[V_{J_k,k}^2]}/\!\!\sqrt{V_{j,k}^2\!+\!\Ep[V_{J_k,k}^2]}\right]
\!\leq \!\mathcal N_k.
}
Therefore, by the Cauchy-Schwarz inequality, we obtain \\$\E[\exp(\eta_k^2)]
\leq\sqrt{\mathcal N_k\Ep\left[\sqrt{V_{J_k,k}^2 + \Ep[V_{J_k,k}^2]}/\sqrt {\Ep[V_{J_k,k}^2]}\right]}$. 

Since $\Ep\left[\sqrt{V_{J_k,k}^2 + \Ep[V_{J_k,k}^2]}/\sqrt {\Ep[V_{J_k,k}^2]}\right]
\leq\sqrt{\Ep\left[(V_{J_k,k}^2 + \Ep[V_{J_k,k}^2])/\Ep[V_{J_k,k}^2]\right]}
\leq\sqrt 2$, we conclude
\ben{\label{eta-k-bound}
\E[\exp(\eta_k^2)]\leq2^{1/4}\sqrt{\mathcal N_k}.
}
This inequality also holds when $\Ep[V_{J_k,k}^2]=0$ because we always have $\eta_k=0$ in such a case. Thus, for $k\geq1$, we have
\begin{align*}
&\Ep\left[\mathbf{1}_{\Omega_{T,k,s}}\!\! \left({96K^2C_\eta^2(\log T) \over T}\eta_k^2 - {5 \over 6}J_T(\hat{f}_T)\right)\right] \! \leq \! \int_{0}^{\infty}\!\!\! \Prob\left(\mathbf{1}_{\Omega_{T,k,s}}\!\!\left({96K^2C_\eta^2(\log T) \over T}\eta_{k}^2 - {5 \over 6}J_T(\hat{f}_T)\right)>x\right)dx\\
&\leq 2^{1/4}\sqrt{\mathcal{N}_k} \int_{0}^{\infty}\exp\left(-{T\left(x + {5\cdot 2^{k-2}s \over 3} \right) \over 96K^2C_\eta^2(\log T)}\right)dx = {2^{1/4}\cdot96K^2C_\eta^2 (\log T) \over T}\sqrt{\mathcal{N}_k}\exp\left(-{5 \cdot 2^{k}sT \over 1152K^2C_\eta^2(\log T)}\right),
\end{align*}
where the second inequality follows from Markov's inequality and \eqref{eta-k-bound}. Since $\iota_\lambda(x)\to\infty$ as $x\to\infty$ and $F\geq1$, there is a constant $T_1$ depending only on $C_1,K,C_\eta$ and $\iota_\lambda$ such that $(C_1\log^{1+\nu_0} T)/\lambda_T \leq 5T/(1152F^2)$ whenever $T\geq T_1$. For such $T$, we have by \eqref{eq:n-est}
\begin{align*}
&{2^{1/4} \! \cdot \!96K^2C_\eta^2 \!(\log T) \over T}\!\sqrt{\mathcal{N}_k}\!\exp \! \left(-{5 \! \cdot \! 2^{k}\! sT \over 1152K^2C_\eta^2 \!(\log T)}\!\right) \!\leq \! {2^{1/4}\! \cdot \! 96K^2C_\eta^2 \!(\log T) \over T}\!\exp \! \left(-{5 \!\cdot \!  2^{k}\!sT \over 2304K^2C_\eta^2 \!(\log T)}\!\right). 
\end{align*}
For $k =0$, 
\begin{align*}
\Ep\left[\mathbf{1}_{\Omega_{T,0,s}}\left({96K^2C_\eta^2(\log T) \over T}\eta_k^2 - {5 \over 6}J_T(\hat{f}_T)\right)\right] &\leq {96K^2C_\eta^2(\log T) \over T}\Ep[\eta_0^2] \leq {48K^2C_\eta^2(\log T) \over T}(1 + \log \mathcal{N}_0)\\
&\leq {48K^2C_\eta^2(\log T) \over T} + {5s \over 24}. 
\end{align*}
Then we have
\begin{align}\label{I-eta-k-bound}
\sum_{k=0}^{\infty}I_{\eta,k} &\leq {48K^2C_\eta^2(\log T) \over T} + {5s \over 24} + {2^{1/4}\cdot96K^2C_\eta^2 (\log T) \over T}\sum_{k=1}^{\infty}\exp\left(-{5 \cdot 2^{k}sT \over 2304K^2C_\eta^2(\log T)}\right) \nonumber \\
&\leq {5s \over 24} + {48K^2C_\eta^2 (\log T) \over T}\left(1 + {2^{5/4}\exp\left(-{5sT \over 1152K^2C_\eta^2(\log T)}\right) \over 1-\exp\left(-{5sT \over 2304K^2C_\eta^2(\log T)}\right)}\right).
\end{align}

Combining (\ref{emp-error-equation}), (\ref{IV-Ieta-bound}), (\ref{I-Vk-bound}), and (\ref{I-eta-k-bound}), we obtain (\ref{emp-error-bound-adaptive}). 

\underline{Step 4} (Conclusion)

From (\ref{pred-error-bound-adaptive}) and (\ref{emp-error-bound-adaptive}) with $a_T = C_{2,\beta}^{-1}\log T$ and $s = (C_{2,\beta}^{-1}+1)(F^2 \vee K^2C_\eta^2)(\log T)/T$, we have 
\begin{align*}
R(\hat{f}_T,f_0) &\leq 6\left(\bar{\Psi}_T(\hat{f}_T,\bar{f}) + R(\bar{f},f_0) + J_T(\bar{f})\right) + {22F^2 \over \log T}\delta_T \nonumber \\
&\quad + {288K^2C_\eta^2(\log T) \over T}\left(1 + {2^{5/4}\exp\left(-{5 \over 1152}\right) \over 1-\exp\left(-{5 \over 2304}\right)}\right) + 12C_\eta \delta_T + {5(C_{2,\beta}^{-1}+1)(F^2 \vee K^2C_\eta^2)(\log T) \over 4T}\\
&\quad + {32F^2 C_{2,\beta}^{-1}(\log T) \over T}\left(1 + {2^{5/4}\exp\left(-{1 \over 128}\right) \over 1-\exp\left(-{1 \over 256}\right)}\right) + 24F \delta_T + {(C_{2,\beta}^{-1}+1)(F^2 \vee K^2C_\eta^2)(\log T) \over 4T}\\ 
&\quad + {64F^2C_{2,\beta}^{-1}(\log T) \over T} + {96F^2C_{1,\beta} \over T} 
\end{align*}
whenever $T\geq T_0\vee T_1$. 
Here, we used the fact that $e^{-2x}/(1-e^{-x})$ is decreasing for $x>0$. Therefore, we obtain the desired result. \qed

\begin{remark}[Generalization error bound for SPDNN under $\beta$-mixing coefficients with polynomial decay]
From (\ref{pred-error-bound-adaptive}) and (\ref{emp-error-bound-adaptive}), we have
\begin{align}
R(\widehat{f}_T, f_0) &\leq 6\left(\bar{\Psi}_T^{\mathcal{F}}(\widehat{f}_T) + \inf_{f \in \mathcal{F}}(R(f,f_0) + J_T(f))\right) \nonumber \\
&\quad + {22F^2 \over T(\log T)} + 12C_\eta \delta_T + 24F\delta_T + {3 \over 2}s  + {32F^2 \over \mu_T} + 48\beta(a_T) \nonumber \\
&\quad + {288K^2C_\eta^2 \log T \over T}\left(1 + {2^{5/4}\exp\left(-{5sT \over 1152K^2 C_\eta^2 \log T}\right) \over 1-\exp\left(-{5sT \over 2304K^2 C_\eta^2 \log T}\right)}\right) \nonumber \\
&\quad + {32F^2 a_T \over T}\left(1 + {2^{5/4}\exp\left(-{sT \over 128F^2 a_T}\right) \over 1-\exp\left(-{sT \over 256F^2a_T}\right)}\right). \label{SPDNN-beta-poly}
\end{align}
If the $\beta$-mixing coefficient decays polynomially fast, that is, $\beta(t)\leq C_\beta t^{-\alpha}$ for some $\alpha>0$, then, letting $s = {(K^2 C_\eta^2 \vee F^2)a_T \over T}$ and $a_T = T^{{1 \over \alpha+1}}$ in (\ref{SPDNN-beta-poly}), we obtain
\begin{align*}
R(\widehat{f}_T, f_0) &\leq 6\left(\bar{\Psi}_T^{\mathcal{F}}(\widehat{f}_T) + \inf_{f \in \mathcal{F}}(R(f,f_0) + J_T(f))\right) + \breve{C}F^2\left({1 + \log T \over T} + T^{-{\alpha \over \alpha +1}}\right),
\end{align*}
for $T > \breve{T}$ where $\breve{T}>0$ is a constant depending only on $(C_\eta, C_L, C_N, C_B, \nu_0, \nu_1, \nu_2, K, \iota_\lambda)$ and $\breve{C}$ is a constant depending only on $(C_\eta, C_\beta, C_\tau, K)$. 
\end{remark}

\section*{Supplementary Material}
The supplementary material includes discussion of our main results (Section \ref{Appendix: discuss}), proof of auxiliary lemmas (Section \ref{Appendix:aux lem}), proofs for Section \ref{sec:ar-model} (Section \ref{Appendix: minimax}), and technical tools (Section \ref{Appendix: technical tools}). In what follows, we set $\beta(t) = \beta_X(t)$.

\section{Discussion}\label{Appendix: discuss}
In this section, we provide a more detailed discussion of our theoretical results from several perspectives.

\subsection{Dependence structure}
In our paper, we assume that the process $\{X_t\}_{t=1}^{T}$ is $\beta$-mixing. On the other hand, physical dependence is also a commonly utilized dependence structure. While obtaining similar results under physical dependence may be possible, we choose to leave it for future research as the proof approach would be entirely different. In time series analysis, $\beta$-mixing is a commonly used dependence structure, and it is known that many stationary and nonstationary time series models exhibit exponential $\beta$-mixing. On this point, we refer to \cite{Mo88} for vector ARMA($p,q$) process, \cite{Bo98} for GARCH($p,q$) process, \cite{ChCh00} for nonlinear AR($p$)-ARCH($q$) process, and \cite{Vo12} for time-varying nonlinear AR($p$)-ARCH($q$) process. However, by assuming $\beta$-mixing, time series models that are $\Psi$-weakly dependent (cf. \cite{DDLJLP07}) but not $\beta$-mixing fall outside the scope of our results. We refer to \cite{Do94} and \cite{DDLJLP07} for examples of non-mixing time series models and leave the extension of our results to that case for future research.

\subsection{Theorems 3.1 and 3.2}
The main difficulties in proving Theorems 3.1 and 3.2 can be summarized as follows: In \cite{Sc20}, it is assumed that the error terms in the non-parametric regression model follow independent normal distributions. Additionally, Lemma 4 (III) in his paper, relies on a maximal inequality for self-normalized random variables to obtain an upper bound for $\hat{R}(\hat{f}_T,f_0)=\Ep[T^{-1}\sum_{t=1}^{T}(\hat{f}_T(X_t) - f_0(X_t))^2]$, but this inequality is heavily dependent on the normality assumption of the error terms. On the other hand, in our proof, we consider self-normalized martingale differences to establish an upper bound for $\hat{R}(\hat{f}_T,f_0)$, and in our model, we do not assume symmetric distributions or normality for the error terms, making it impossible to apply his approach. Specifically, in our setup, we need to apply an exponential inequality to self-normalized martingales. Existing research only provides such inequalities for martingale differences with symmetric distributions. Therefore, extending existing results to martingale differences without assuming the symmetricity of their distributions is necessary. This extension is achieved by considering the continuous-time embedding of discrete-time martingale differences (Lemma E.3). In addition, \cite{Sc20} does not handle penalized estimators, so the proof of Theorem 3.2 has an additional difficulty. In particular, since we need to optimize the objective function over the entire class $\mathcal{F}_{\sigma}(L,N,B,F)$ and this class has a too large covering number, we need to show that its ``largeness'' is appropriately controlled by the penalty term. Regarding this point, \cite{OhKi22} handle a sparse-penalized DNN estimator for nonparametric regression models with i.i.d.~errors, but their proofs heavily depend on the theory for i.i.d.~data in \cite{GKKW02} so extending their approach to our framework seems to require substantial work. 
We have thus developed a rather different approach by adapting \cite{Sc20}'s proof to penalized estimators.

\subsection{Results in Section 4}
The main difficulties in proving results in Section 4 can be summarized as follows: To prove (near) minimax optimality for estimating mean functions belonging to some function class, we need to establish both upper and lower bounds for the estimation error \textit{uniformly valid} over this function class. Development of the lower bound is basically similar to \cite{Sc20}, but we need substantial work to establish a uniform upper bound because our abstract bounds developed in Section 3 contains bound for the $\beta$-mixing coefficients: We need a uniformly valid upper bound for the $\beta$-mixing coefficients, but such a bound is rarely available in the literature. 
To resolve this issue, we prove Lemma 4.1 which provides a uniform upper bound for the $\beta$-mixing coefficients over a fairly large class of mean functions.

\section{Auxiliary Lemmas}\label{Appendix:aux lem}
Now we show two Lemmas \ref{lem: error-gap-bound} and \ref{lem: emp-error-bound}. Note that the results do not require the estimator $\hat{f}_{T}$ to take values in $\mathcal{F}_{\sigma}(L,N,B,F,S)$ and hence would be of independent interest.

\begin{lemma}\label{lem: error-gap-bound}
Let $\delta>0$ and suppose that there exists an integer $\mathcal{N}_T$ such that $\mathcal{N}_T \geq N(\delta, \mathcal{F}, \|\cdot\|_{\infty}) \vee \exp(10)$. Also, let $a_T$ be a positive number such that $\mu_T := \lfloor T/(2a_T)\rfloor > 0$. In addition, suppose that there is a number $F \geq 1$ such that $\|f\|_{\infty} \leq F$ for all $f \in \mathcal{F} \cup \{f_0\}$. Then, for all $\varepsilon \in  (0, 1]$,
\begin{align*}
R(\hat{f}_{T},f_0) &\leq (1+\varepsilon)\widehat{R}(\hat{f}_{T},f_0) + {21(1+\varepsilon)^2 \over \varepsilon}F^2{\log \mathcal{N}_T \over \mu_T} + {4F^2 \over \mu_T} + 4(2+\varepsilon)F^2\beta(a_T) + 4(2+\varepsilon)F\delta.
\end{align*}
\end{lemma}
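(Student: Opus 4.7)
The plan is to control the generalization gap $R(\hat f_T,f_0) - \widehat R(\hat f_T,f_0)$ by reducing it to a uniform multiplicative deviation over a finite $\delta$-cover of $\mathcal F$, using a blocking/coupling argument to deal with the temporal dependence and a one-sided Bernstein inequality to obtain the multiplicative factor $(1+\varepsilon)$.

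\emph{Step 1 (discretization).} Pick a minimal $\delta$-cover $\mathcal G\subset\mathcal F$ with $|\mathcal G|\leq\mathcal N_T$ and, for each $f\in\mathcal F$, choose $\pi(f)\in\mathcal G$ with $\|f-\pi(f)\|_\infty\leq\delta$. Since $\|f\|_\infty,\|f_0\|_\infty\leq F$, the identity $(f-f_0)^2-(\pi(f)-f_0)^2=(f-\pi(f))(f+\pi(f)-2f_0)$ yields a pointwise bound of $4F\delta$, so replacing $\hat f_T$ by $\pi(\hat f_T)$ in either $R(\cdot,f_0)$ or $\widehat R(\cdot,f_0)$ costs at most $4F\delta$, contributing the $4(2+\varepsilon)F\delta$ remainder.

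\emph{Step 2 (blocking and Berbee coupling).} Partition $\{1,\dots,T\}$ into $2\mu_T$ consecutive blocks of length $a_T$ (plus a boundary remainder of length $\leq 2a_T$, which contributes at most $4F^2/\mu_T$ to both risks after division by $T$). Label these as odd blocks $I_1,\dots,I_{\mu_T}$ and even blocks $I_1',\dots,I_{\mu_T}'$. Applying Berbee's coupling lemma iteratively to the odd family and to the even family (consecutive blocks within each family are separated by a full block of length $a_T$), we build on an enlarged space independent random vectors $(X_{I_j}^\dagger)_{j}$ and $(X_{I_j'}^\ddagger)_{j}$ reproducing the block-wise marginals of $X$ with coupling-failure probability controlled by a constant multiple of $\mu_T\beta(a_T)$. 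On the failure event each term is bounded by $4F^2$, producing the $4(2+\varepsilon)F^2\beta(a_T)$ contribution in the bound.

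\emph{Step 3 (one-sided Bernstein on independent blocks).} For each $g\in\mathcal G$ set $Z_t(g):=(g(X_t^\dagger)-f_0(X_t^\dagger))^2\in[0,4F^2]$ on odd blocks (and analogously on even blocks). The odd-block average $\bar Z^{\mathrm{odd}}(g)$ is then a normalized sum of $\mu_T$ independent block-averages taking values in $[0,4F^2]$, with mean $\mu^{\mathrm{odd}}(g)$ equal to the odd-block restriction of $R(g,f_0)$ and variance bounded, by the self-bounding inequality $\mathbb E[Z_t^2]\leq 4F^2\,\mathbb E[Z_t]$, by $4F^2\mu^{\mathrm{odd}}(g)$. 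A one-sided Craig--Bernstein bound then gives, for any $u>0$,
\[
P\bigl(\mu^{\mathrm{odd}}(g)>(1+\varepsilon)\bar Z^{\mathrm{odd}}(g)+u\bigr)\leq \exp\!\bigl(-c_\varepsilon \mu_T u/F^2\bigr),\qquad c_\varepsilon\asymp\varepsilon/(1+\varepsilon)^2.
\]
A union bound over $\mathcal G$ and tail integration yield
\[
\mathbb E\!\left[\sup_{g\in\mathcal G}\bigl\{\mu^{\mathrm{odd}}(g)-(1+\varepsilon)\bar Z^{\mathrm{odd}}(g)\bigr\}_+\right]\lesssim \frac{(1+\varepsilon)^2}{\varepsilon}\cdot\frac{F^2\log\mathcal N_T}{\mu_T},
\]
and the analogous inequality holds on even blocks. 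Applying these uniform bounds at the (random) choice $g^\star=\pi(\hat f_T)\in\mathcal G$, summing odd and even contributions, unwinding the coupling via Step 2 and reverting $\pi(\hat f_T)\to\hat f_T$ via Step 1, assembles precisely the four error terms in the statement.

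\emph{Main obstacle.} The delicate point is Step 3: converting the additive deviation $u$ into the desired multiplicative factor $(1+\varepsilon)$ while preserving the sharp prefactor $21(1+\varepsilon)^2/\varepsilon$ in the covering term. This requires the self-bounding variance estimate $\mathrm{Var}(Z_t(g))\leq 4F^2\mathbb E[Z_t(g)]$ (which holds because $Z_t\geq 0$ and $Z_t\leq 4F^2$) together with a one-sided Craig-type Bernstein inequality rather than the standard symmetric form; a secondary subtlety is that the process is only $\beta$-mixing and possibly nonstationary, so the Berbee recoupling must be iterated within the two families of ``well-separated'' blocks produced by the partition, and the resulting mean $\mu^{\mathrm{odd}}(g)$ is a $t$-dependent average rather than a single stationary mean.
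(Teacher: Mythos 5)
Your three-step plan — discretize to a $\delta$-cover, block and couple to manufacture independence, then a relative-deviation concentration bound — is structurally sound and matches the paper's framework in Steps 1 and 2 (the paper couples the vectors of covered-function block sums rather than the raw block data, but this is cosmetic: both costs are controlled by $\beta(a_T)$ and yield the same remainder). Step 3 is where you take a genuinely different route. The paper follows a Schmidt-Hieber-style ghost-sample argument: it never introduces the deterministic mean $\mu(g)$, but instead compares the coupled block sums $g_{j,\ell}$ built from $X$ against block sums $g^\ast_{j,\ell}$ built from the independent copy $X^\ast$ (so two rounds of coupling are needed). It then forms the mean-normalized supremum $B:=\max_j|\sum_\ell(g^\ast_{j,\ell}-g_{j,\ell})/(2Fr_j)|$, bounds its first and second moments by Bernstein plus a union bound and tail integration, and finally recovers the $(1+\varepsilon)$ multiplicative factor algebraically by Cauchy--Schwarz and a Young-type inequality (their reference to (C.4) in Schmidt-Hieber). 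You instead push the $(1+\varepsilon)$ factor directly into a one-sided ``Bernstein with self-bounding variance'' tail estimate $\Prob(\mu^{\mathrm{odd}}(g)>(1+\varepsilon)\bar Z^{\mathrm{odd}}(g)+u)\le\exp(-c_\varepsilon\mu_T u/F^2)$ and then do union bound plus tail integration. Both routes exploit the same two ingredients (Bernstein and $\mathrm{Var}(Z_t)\le 4F^2\E[Z_t]$), and your single-coupling version is arguably cleaner; what each buys is a different bookkeeping of the $\varepsilon$-dependence. Your argument as written is at the level of $\lesssim$, so it does not verify the specific constant $21(1+\varepsilon)^2/\varepsilon$ in the lemma, and your claimed exponent $c_\varepsilon\asymp\varepsilon/(1+\varepsilon)^2$ is not derived --- a direct optimization of the one-sided Bernstein bound in the regime $\mu\asymp u$ yields $c_\varepsilon\asymp\varepsilon/(3+4\varepsilon)^2$, which is of the same order on $(0,1]$ but not literally the form you state; to turn this into a complete proof of the stated lemma you would need to track these constants through to the tail-integration step.
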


\begin{proof}

Let $\{f_{1},\dots, f_{\mathcal{N}_T}\}$ be a $\delta$-covering of $\mathcal{F}$ with respect to $\|\cdot\|_{\infty}$ and define a random variable $J$ taking values in $\{1,\dots, \mathcal{N}_T\}$ such that $\|\hat{f}_{T} - f_{J}\|_{\infty} \leq \delta$.  

\underline{Step 1} (Reduction to independence) We rely on the coupling technique for $\beta$-mixing sequences to construct independent blocks; cf. \cite{Ri13}. For $\ell = 0,\dots, \mu_T-1$, let $I_{1,\ell} = \{2\ell a_T+1, \dots, (2\ell+1) a_T \}$, $I_{2,\ell} = \{(2\ell+1)a_T+1, \dots, 2(\ell+1)a_T\}$. Define
\begin{align*}
\tilde{g}_{\ell} &:= (\tilde{g}_{1,\ell},\dots, \tilde{g}_{\mathcal{N}_T,\ell})' = \left(\sum_{t \in I_{1,\ell}}(f_1(X_t) - f_0(X_t))^2, \dots, \sum_{t \in I_{1,\ell}}(f_{\mathcal{N}_T}(X_t) - f_0(X_t))^2\right)',\\
\tilde{g}_{\ell}^{\ast} &:= (\tilde{g}_{1,\ell}^{\ast},\dots, \tilde{g}_{\mathcal{N}_T,\ell}^{\ast})' = \left(\sum_{t \in I_{1,\ell}}(f_1(X_t^{\ast}) - f_0(X_t^{\ast}))^2, \dots, \sum_{t \in I_{1,\ell}}(f_{\mathcal{N}_T}(X_t^{\ast}) - f_0(X_t^{\ast}))^2\right)'.
\end{align*}
In the following, we extend the probability space if necessary and assume that there is a sequence $\{U_{\ell}\}_{\ell = 1}^{\infty}$ of i.i.d.~uniform random variables over $[0,1]$ independent of $X$. 

We will show that there exist two sequences of independent $\mathbb{R}^{\mathcal{N}_T}$-valued random variables $\{g_{\ell}\}_{\ell = 0}^{\mu_T-1}$ and $\{g_{\ell}^{\ast}\}_{\ell = 0}^{\mu_T-1}$  such that for all $\ell = 0,\dots,\mu_T-1$, 
\begin{align}
\left|\Ep[g_{j,\ell}] - \Ep[\tilde{g}_{j,\ell}]\right| &\leq 4F^2a_T\beta(a_T), \label{expect-block1}\\
\left|\Ep[g_{j,\ell}^{\ast}] - \Ep[\tilde{g}_{j,\ell}^{\ast}]\right| &\leq 4F^2a_T\beta(a_T). \label{expect-block2}
\end{align}
where $g_{j,\ell}$ and $g_{j,\ell}^{\ast}$ be the $j$-th component of $g_{\ell}$ and $g_{\ell}^{\ast}$, respectively. We only prove (\ref{expect-block1}) since the proof of (\ref{expect-block2}) is similar. 

First we will show that there exist a sequences $\{g_{\ell}\}_{\ell = 0}^{\mu_T-1}$ of independent random vectors in $\mathbb{R}^{\mathcal{N}_T}$ such that 
\begin{align*}
g_{\ell} &\stackrel{\mathcal{L}}{=} \tilde{g}_{\ell},\ \Prob(g_{\ell} \neq \tilde{g}_{\ell}) \leq \beta(a_T)\ \text{for $0 \leq \ell \leq \mu_T-1$}.
\end{align*}
For all $\ell_{1}, \ell_{2}$, define the $\sigma$-field $\mathcal{A}(\ell_{1},\ell_{2})$ generated by $\{X_{t}\}_{t \in I(\ell_{1},\ell_{2})}$ where $I(\ell_{1},\ell_{2}):= \bigcup_{\ell = \ell_{1}}^{\ell_{2}}I_{1,\ell}$. From the definition of $\tilde{g}_{\ell}$, we find that $\sigma(\tilde{g}_{\ell}) \subset \mathcal{A}(\ell, \ell)$ for all $\ell$. Applying Lemma \ref{lem: block}, there exists a random vector $g_{\ell}$ such that $g_{\ell} \stackrel{\mathcal{L}}{=} \tilde{g}_{\ell}$, independent of $\mathcal{A}(0,\ell-1)$, and $\Prob(g_{\ell} \neq \tilde{g}_{\ell}) \leq \beta(a_T)$. Moreover, $g_{\ell}$ is measurable with respect to the $\sigma$-field generated by $\mathcal{A}(0,\ell)$ and $U_{\ell}$. Therefore, for any $\ell$, $g_{\ell}$ is independent of $\{g_{\ell'}\}_{\ell'=1}^{\ell-1}$, since for $\ell_1, \ell_{2}$ with $\ell_1 < \ell_2$, $g_{\ell_2}$ is independent of the $\sigma$-field generated by $\mathcal{A}(0,\ell_1)$ and $U_{\ell_1}$. This implies that $\{g_{\ell}\}_{\ell = 0}^{\mu_T-1}$ is a sequence of independent random variables.

Next we will show (\ref{expect-block1}). By definition we have $0\leq\tilde{g}_{j,\ell} \leq 4F^2a_T$ for all $j$. Since $g_{\ell}$ has the same law as $\tilde{g}_{\ell}$, we also have $0\leq g_{j,\ell} \leq 4F^2a_T$ a.s. for all $j$. Consequently, we have
\begin{align*}
\left|\Ep[g_{j,\ell}] - \Ep[\tilde{g}_{j,\ell}]\right| &\leq \Ep\left[\left|g_{j,\ell} - \tilde{g}_{j,\ell}\right|\mathbf{1}_\{g_{j,\ell} \neq \tilde{g}_{j,\ell}\}\right] \leq 4F^2a_T\Prob(g_{j,\ell} \neq \tilde{g}_{j,\ell}) \leq 4F^2a_T\beta(a_T).
\end{align*}

\underline{Step 2} (Bounding the difference of the sum of independent blocks) In this step, we will show 
\begin{align}\label{indep-block-sum-bound}
\Ep\left[\sum_{\ell = 0}^{\mu_T-1}g_{J,\ell}^{\ast}\right] &\leq (1+\varepsilon)\Ep\left[\sum_{\ell = 0}^{\mu_T-1}g_{J,\ell}\right] + {21(1+\epsilon)^2 \over \varepsilon}F^2a_T\log \mathcal{N}_T.
\end{align}
for all $\varepsilon \in (0,1]$. 

For $j = 1,\dots, \mathcal{N}_T$, define
\begin{align*}
r_j &:= \left({4F^2a_T \log \mathcal{N}_T \over \mu_T} \vee {1 \over \mu_T}\sum_{\ell = 0}^{\mu_T-1}E[g_{j,\ell}^{\ast}]\right)^{1/2},\\
B& := \max_{1 \leq j\leq \mathcal{N}_T}\left|{\sum_{\ell = 0}^{\mu_T-1}(g_{j,\ell}^{\ast} - g_{j,\ell}) \over 2Fr_j}\right|.
\end{align*}
By definition, we have
\begin{align*}
\left|\Ep\left[\sum_{\ell = 0}^{\mu_T-1}g_{J,\ell}^{\ast}\right] - \Ep\left[\sum_{\ell = 0}^{\mu_T-1}g_{J,\ell}\right]\right| &\leq \Ep\left[\left|\sum_{\ell = 0}^{\mu_T-1}(g_{J,\ell}^{\ast} - g_{J,\ell})\right|\right] \leq 2F\Ep\left[r_{J}B\right].
\end{align*}
By the Cauchy-Schwarz inequality, we have 
\begin{align*}
E[r_{J}B] &\leq \Ep\left[\left({1 \over \mu_T}\sum_{\ell = 0}^{\mu_T-1}E[g_{J,\ell}^{\ast}]\right)^{1/2}B\right] + \left({4F^2a_T \log \mathcal{N}_T \over \mu_T}\right)^{1/2}\Ep\left[B\right]\\
&\leq \Ep\left[{1 \over \mu_T}\sum_{\ell = 0}^{\mu_T-1}E[g_{J,\ell}^{\ast}]\right]^{1/2}\Ep[B^2]^{1/2} + 2F\sqrt{{a_T \log \mathcal{N}_T \over \mu_T}}\Ep\left[B\right]\\
&= \Ep\left[\sum_{\ell = 0}^{\mu_T-1}E[g_{J,\ell}^{\ast}]\right]^{1/2}\left({\Ep[B^2] \over \mu_T}\right)^{1/2} + 2F\sqrt{{a_T \log \mathcal{N}_T \over \mu_T}}\Ep\left[B\right].
\end{align*} 
Therefore, we have 
\begin{align*}
\left|\Ep\left[\sum_{\ell = 0}^{\mu_T-1}g_{J,\ell}^{\ast}\right] - \Ep\left[\sum_{\ell = 0}^{\mu_T-1}g_{J,\ell}\right]\right| &\leq 2\Ep\left[\sum_{\ell = 0}^{\mu_T-1}E[g_{J,\ell}^{\ast}]\right]^{1/2}\left({F^2\Ep[B^2] \over \mu_T}\right)^{1/2} + 4F^2\sqrt{{a_T \log \mathcal{N}_T \over \mu_T}}\Ep\left[B\right].
\end{align*}
Hence, using (C.4) in \cite{Sc20}, we have
\begin{align}\label{indep-block-bound2}
\Ep\left[\sum_{\ell = 0}^{\mu_T-1}g_{J,\ell}^{\ast}\right] &\leq (1+\varepsilon)\Ep\left[\sum_{\ell = 0}^{\mu_T-1}g_{J,\ell}\right] + 4F^2(1+\varepsilon)\sqrt{{a_T \log \mathcal{N}_T \over \mu_T}}\Ep\left[B\right] + {(1+\varepsilon)^2 \over \varepsilon}{F^2\Ep[B^2] \over \mu_T}.
\end{align}
Now we show 
\begin{align}\label{T-expect-bound}
\Ep[B] &\leq 3\sqrt{a_T\mu_T\log \mathcal{N}_T},\ \Ep[B^2] \leq 9a_T\mu_T\log \mathcal{N}_T. 
\end{align}
Define $\gamma := {1 + \sqrt{37} \over 3}$ and $\alpha:= \gamma\sqrt{a_T\mu_T \log \mathcal{N}_T}$. Note that $\gamma$ solves the equation $3\gamma^2 - 2\gamma - 12=0$. For all $j,\ell$, we have by construction
\begin{align*}
\left|{g_{j,\ell}^{\ast} - g_{j,\ell} \over 2Fr_j}\right| &\leq {4F^2a_T \over 2F\sqrt{4F^2a_T \log \mathcal{N}_T \over \mu_T}}  = \sqrt{a_T\mu_T \over \log \mathcal{N}_T} 
\end{align*}
and 
\begin{align*}
\Var\left({g_{j,\ell}^{\ast} - g_{j,\ell} \over 2Fr_j}\right) &\leq {2\Ep[(g_{j,\ell}^{\ast})^2] \over 4F^2r_j^2} \leq {8F^2a_T\Ep[g_{j,\ell}^{\ast}] \over 4F^2{1 \over \mu_T}\sum_{\ell' = 0}^{\mu_T-1}\Ep[g_{j,\ell'}^{\ast}]} = {2a_T\mu_T\Ep[g^*_{j,\ell}] \over \sum_{\ell' = 0}^{\mu_T-1}\Ep[g_{j,\ell'}^{\ast}]}.
\end{align*}
Then 
\[
\sum_{\ell = 0}^{\mu_T-1}\Var\left({g_{j,\ell}^{\ast} - g_{j,\ell} \over 2Fr_j}\right) \leq 2a_T\mu_T. 
\]
Using Bernstein's inequality (cf. Lemma 2.2.9 in \cite{vaWe96}), we have for all $x \geq \alpha$, 
\begin{align*}
\Prob\left(\left|{\sum_{\ell = 0}^{\mu_T-1}(g_{j,\ell}^{\ast} - g_{j,\ell}) \over 2Fr_j}\right| \geq x\right) &\leq 2\exp\left(-{{1 \over 2}x^2 \over 2a_T\mu_T + {1 \over 3}\sqrt{{a_T\mu_T \over \log \mathcal{N}_T}}x}\right)\\
&\leq 2\exp\left(-{{1 \over 2}\gamma\sqrt{a_T\mu_T \log \mathcal{N}_T} \over 2a_T\mu_T + {1 \over 3}\sqrt{{a_T\mu_T \over \log \mathcal{N}_T}}\gamma\sqrt{a_T\mu_T \log \mathcal{N}_T}}x\right)\\
&= 2\exp\left(-{3\gamma^2 \over 12 + 2\gamma}{\log \mathcal{N}_T \over \alpha}x\right) = 2\exp\left(-{\log \mathcal{N}_T \over \alpha}x\right).
\end{align*}
Thus, 
\begin{align*}
\Prob\left(B \geq x\right) &\leq \sum_{j=1}^{\mathcal{N}_T}\Prob\left(\left|{\sum_{\ell = 0}^{\mu_T-1}(g_{j,\ell}^{\ast} - g_{j,\ell}) \over 2Fr_j}\right| \geq x\right) \leq 2\mathcal{N}_T\exp\left(-{\log \mathcal{N}_T \over \alpha}x\right).
\end{align*}
Hence we have
\begin{align*}
E[B] &= \int_{0}^{\infty}\Prob(B \geq x)dx \leq \alpha + \int_{\alpha}^{\infty}P(B \geq x)dx\\
&\leq \alpha + 2\mathcal{N}_T{\alpha \over \log \mathcal{N}_T}\exp\left(-{\log \mathcal{N}_T \over \alpha}\alpha\right) = \left(1 + {2 \over \log \mathcal{N}_T}\right)\gamma \sqrt{a_T\mu_T \log \mathcal{N}_T}\\
&\leq 3\sqrt{a_T\mu_T \log \mathcal{N}_T}
\end{align*}
and 
\begin{align*}
E[B^2] &= 2\int_{0}^{\infty}x\Prob(B \geq x)dx \leq 2\int_{0}^{\alpha}xdx + 2\int_{\alpha}^{\infty}x\Prob(B \geq x)dx\\
&\leq 2\alpha^2 + 4\mathcal{N}_T\int_{\alpha}^{\infty}x\exp\left(-{\log \mathcal{N}_T \over \alpha}x\right)dx\\
&\leq \alpha^2 + 4\mathcal{N}_T\left({\alpha^2 \over \log \mathcal{N}_T} + {\alpha^2 \over \log^2 \mathcal{N}_T}\right)\exp\left(-{\log \mathcal{N}_T \over \alpha}\alpha\right)\\
&\leq 1.44\gamma^2a_T\mu_T \log \mathcal{N}_T \leq 9a_T\mu_T \log \mathcal{N}_T. 
\end{align*}

Combining (\ref{indep-block-bound2}) and (\ref{T-expect-bound}), we have
\begin{align*}
\Ep\left[\sum_{\ell = 0}^{\mu_T-1}g_{J,\ell}^{\ast}\right] &\leq (1+\varepsilon)\Ep\left[\sum_{\ell = 0}^{\mu_T-1}g_{J,\ell}\right] + 12(1+\varepsilon)F^2a_T \log \mathcal{N}_T + {9(1+\varepsilon)^2 \over \varepsilon}F^2a_T\log \mathcal{N}_T\\
&\leq (1+\varepsilon)\Ep\left[\sum_{\ell = 0}^{\mu_T-1}g_{J,\ell}\right] + {21(1+\epsilon)^2 \over \varepsilon}F^2a_T\log \mathcal{N}_T,
\end{align*}
where the last inequality follows from $1 \leq (1+\varepsilon)/\varepsilon$. 

\underline{Step 3} (Conclusion) From (\ref{expect-block2}) and (\ref{indep-block-sum-bound}), we have
\begin{align}
\Ep\left[\sum_{\ell = 0}^{\mu_T-1}\tilde{g}_{J,\ell}^{\ast}\right] &\leq \Ep\left[\sum_{\ell = 0}^{\mu_T-1}g_{J,\ell}^{\ast}\right] + 4F^2a_T\mu_T\beta(a_T) \nonumber \\
&\leq (1+\varepsilon)\Ep\left[\sum_{\ell = 0}^{\mu_T-1}g_{J,\ell}\right] + {21(1+\epsilon)^2 \over \varepsilon}F^2a_T\log \mathcal{N}_T + 4F^2a_T\mu_T\beta(a_T) \nonumber \\
&\leq (1+\varepsilon)\Ep\left[\sum_{\ell = 0}^{\mu_T-1}\tilde{g}_{J,\ell}\right] + {21(1+\epsilon)^2 \over \varepsilon}F^2a_T\log \mathcal{N}_T + 4(2+\varepsilon)F^2a_T\mu_T\beta(a_T). \label{pred-error-bound1}
\end{align}
Additionally, define
\begin{align*}
\tilde{h}_{J,\ell} &:= \sum_{t \in I_{2,\ell}}(f_{J}(X_t) - f_0(X_t))^2,\ \tilde{h}_{J,\ell}^{\ast} := \sum_{t \in I_{2,\ell}}(f_{J}(X_t^{\ast}) - f_0(X_t^{\ast}))^2.
\end{align*}
Then a similar argument to derive (\ref{pred-error-bound1}) yields 
\begin{align}\label{pred-error-bound2}
\Ep\left[\sum_{\ell = 0}^{\mu_T-1}\tilde{h}_{J,\ell}^{\ast}\right] &\leq (1+\varepsilon)\Ep\left[\sum_{\ell = 0}^{\mu_T-1}\tilde{h}_{J,\ell}\right] + {21(1+\epsilon)^2 \over \varepsilon}F^2a_T\log \mathcal{N}_T + 4(2+\varepsilon)F^2a_T\mu_T\beta(a_T).
\end{align}
Now, note that
\begin{align*}
R(f_{J}, f_0) &= {1 \over T}\Ep\left[\sum_{\ell = 0}^{\mu_T-1}\tilde{g}_{J,\ell}^{\ast} + \sum_{\ell = 0}^{\mu_T-1}\tilde{h}_{J,\ell}^{\ast} + \sum_{t = 2a_T\mu_T+1}^{T}(f_{J}(X_t^{\ast}) - f_0(X_t^{\ast}))^2\right],\\
\widehat{R}(f_{J}, f_0) &= {1 \over T}\Ep\left[\sum_{\ell = 0}^{\mu_T-1}\tilde{g}_{J,\ell} + \sum_{\ell = 0}^{\mu_T-1}\tilde{h}_{J,\ell} + \sum_{t = 2a_T\mu_T+1}^{T}(f_{J}(X_t) - f_0(X_t))^2\right].
\end{align*}
Together with (\ref{pred-error-bound1}) and (\ref{pred-error-bound2}), we have
\begin{align*}
R(f_{J}, f_0) &\leq {1 \over T}\left\{(1+\varepsilon)\Ep\left[\sum_{\ell = 0}^{\mu_T-1}\tilde{g}_{J,\ell} + \sum_{\ell = 0}^{\mu_T-1}\tilde{h}_{J,\ell}\right] + \Ep\left[\sum_{t = 2a_T\mu_T+1}^{T}(f_{J}(X_t^{\ast}) - f_0(X_t^{\ast}))^2\right]\right\}\\
&\quad + {2 \over T}\left\{{21(1+\epsilon)^2 \over \varepsilon}F^2a_T\log \mathcal{N}_T + 4(2+\varepsilon)F^2a_T\mu_T\beta(a_T)\right\}\\
&\leq (1+\varepsilon)\widehat{R}(f_J,f_0) + {21(1+\varepsilon)^2 \over \varepsilon}{F^2 \log \mathcal{N}_T \over \mu_T} + 4(2+\varepsilon)F^2\beta(a_T)\\
&\quad + {1 \over T}\Ep\left[\sum_{t = 2a_T\mu_T+1}^{T}\left\{(f_{J}(X_t^{\ast}) - f_0(X_t^{\ast}))^2 - (f_{J}(X_t) - f_0(X_t))^2\right\}\right].
\end{align*}
Since 
\begin{align}
\left|(f_J(X_t) - f_0(X_t))^2 - (f_J(X_t^{\ast}) - f_0(X_t^{\ast}))^2\right| &\leq 4F^2,\nonumber \\
\left|(\hat{f}_T(x) - f_0(x))^2 - (f_J(x) - f_0(x))^2\right| &= |\hat{f}_T(x) - f_J(x)||\hat{f}_T(x)+f_J(x) - 2f_0(x)| \nonumber \\
&\leq 4F\delta, \label{fhat-f-bound}
\end{align}
we have
\begin{align*}
R(\hat{f}_T, f_0) &\leq 4F\delta + (1+\varepsilon)\left\{\widehat{R}(\hat{f}_T,f_0) + 4F\delta\right\} + {21(1+\varepsilon)^2 \over \varepsilon}{F^2 \log \mathcal{N}_T \over \mu_T}\\
&\quad + 4(2+\varepsilon)F^2\beta(a_T) + {4F^2 \cdot 2a_T \over T}\\
&\leq (1+\varepsilon)\widehat{R}(\hat{f}_T,f_0) + {21(1+\varepsilon)^2 \over \varepsilon}{F^2 \log \mathcal{N}_T \over \mu_T} + {4F^2 \over \mu_T} + 4(2+\varepsilon)F^2\beta(a_T) + 4(2+\varepsilon)F\delta.
\end{align*}
\end{proof}

\begin{lemma}\label{lem: emp-error-bound}
Let $\{(Y_t, X_t)\}_{t=1}^{T}$ be a time series satisfying (2.1), and set $f_0:= m\mathbf{1}_{[0,1]^{d}}$. Also, let $\delta > 0$ and assume $\mathcal{N}_T := N(\delta, \mathcal{F}, \|\cdot\|_{\infty})<\infty$. Suppose that there is a number $F \geq 1$ such that $\|f\|_{\infty} \leq F$ for all $f \in \mathcal{F} \cup \{f_0\}$. Suppose also that $\supp(f) \subset [0, 1]^{d}$ for all $f \in \mathcal{F}$. Then, under Assumption \ref{Ass: model}, for all $\varepsilon \in (0, 1)$ there exists a constant $C_{ \varepsilon}$ depending only on $(C_\eta, \varepsilon,K)$ such that
\begin{align*}
\widehat{R}(\hat{f}_{T},f_0) &\leq {1 \over 1-\varepsilon}\Psi_T^{\mathcal{F}}(\hat{f}_{T}) + {1 \over 1-\varepsilon}\inf_{f \in \mathcal{F}}R(f,f_0) + C_{\varepsilon}F^2\gamma_{\delta,T},
\end{align*}
where 
\[
\gamma_{\delta,T} := \delta +  {(\log T)(\log \mathcal{N}_T) \over T} + {1 \over T}. 
\]
\end{lemma}

\begin{proof}
Let $\{f_{1},\dots, f_{\mathcal{N}_T}\}$ be a $\delta$-covering of $\mathcal{F}$ with respect to $\|\cdot\|_{\infty}$ and define a random variable $J$ taking values in $\{1,\dots, \mathcal{N}_T\}$ such that $\|\hat{f}_{T} - f_{J}\|_{\infty} \leq \delta$.  

\underline{Step 1} In this step, we will show that for any $\bar{f} \in \mathcal{F}$, 
\begin{align}\label{est-error-eq}
\widehat{R}(\hat{f}_T, f_0) &= \Psi_{T}(\hat{f}_T, \bar{f}) + \widehat{R}(\bar{f}, f_0) + 2\Ep\left[{1 \over T}\sum_{t=1}^{T}(\hat{f}_T(X_t) - f_0(X_t))\eta(X_t)v_t\right], 
\end{align}
where $\Psi_T(\hat{f}_T, \bar{f}) = \Ep\left[Q_T(\hat{f}_T) - Q_T(\bar{f})\right]$. As $Y_t = m(X_t) + \eta(X_t)v_t$, we have
\begin{align*}
Y_t^2 - f_0^2(X_t) &= (Y_t - \hat{f}_T(X_t))^2 - (\hat{f}_T(X_t) - f_0(X_t))^2 + 2\hat{f}_T(X_t)(Y_t - f_0(X_t))\\
&= (Y_t - \hat{f}_T(X_t))^2 - (\hat{f}_T(X_t) - f_0(X_t))^2 + 2\hat{f}_T(X_t)\eta(X_t)v_t.
\end{align*}
For the second equation, we have used the fact $\supp(\hat{f}_T) \subset [0,1]^d$. Likewise, 
\begin{align*}
Y_t^2 - f_0^2(X_t) &= (Y_t - \bar{f}(X_t))^2 - (\bar{f}(X_t) - f_0(X_t))^2 + 2\bar{f}(X_t)\eta(X_t)v_t.
\end{align*}
Since 
\[
\Ep\left[(f_0(X_t) - \bar{f}(X_t))\eta(X_t)v_t\right] = \Ep\left[(f_0(X_t) - \bar{f}(X_t))\eta(X_t)\Ep\left[v_t|\mathcal{G}_{t-1}\right]\right]=0, 
\]
we have 
\begin{align*}
\widehat{R}(\hat{f}_T, f_0) &= \Psi_T(\hat{f}_T, \bar{f}) + \widehat{R}(\bar{f},f_0) + 2\Ep\left[{1 \over T}\sum_{t=1}^{T}(\hat{f}_T(X_t) - f_0(X_t))\eta(X_t)v_t\right]\\
&\quad + 2\Ep\left[{1 \over T}\sum_{t=1}^{T}(f_0(X_t)-\bar{f}(X_t))\eta(X_t)v_t\right]\\
&= \Psi_T(\hat{f}_T, \bar{f}) + \widehat{R}(\bar{f},f_0) + 2\Ep\left[{1 \over T}\sum_{t=1}^{T}(\hat{f}_T(X_t) - f_0(X_t))\eta(X_t)v_t\right].
\end{align*}

\underline{Step 2} In this step, we will show
\begin{align}\label{est-error-bound1}
&\Ep\left[\left|{1 \over T}\sum_{t=1}^{T}(f_J(X_t) - f_0(X_t))\eta(X_t)v_t\right|\right] \nonumber \\ 
&\quad \leq 4C_\eta K\left({(\log T)(\log \mathcal{N}_T + {1 \over 2}\log 2) \over T}\right)^{1/2}\left(\widehat{R}(\hat{f}_T,f_0) + F\delta + {4F^2 \over T}\right)^{1/2}. 
\end{align}
For every $j=1,\dots,\mathcal N_T$, define 
\begin{align*}
A_j &:= \sum_{t=1}^{T}(f_j(X_t) - f_0(X_t))\eta(X_t)v_t,\\
B_j&:= \left(\sum_{t=1}^{T}(f_j(X_t) - f_0(X_t))^2\eta^2(X_t)(v_t^2+1)\right)^{1/2}
\end{align*}
and
\ba{
\xi_{j} &:= {A_j \over 2\sqrt{B_j^2 + \Ep[B_J^2]}},
}
where $\xi_{j}:=0$ if the denominator equals $0$. 
By the Cauchy-Schwarz inequality, 
\begin{align}
\Ep\left[\left|{1 \over T}\sum_{t=1}^{T}(f_J(X_t) - f_0(X_t))\eta(X_t)v_t\right|\right] &\leq {2 \over T}\Ep\left[\left|\xi_J\right|\left(B_J^2 + \Ep[B_J^2]\right)^{1/2}\right] \nonumber \\
&\leq {2 \over T}\Ep\left[\xi_J^2\right]^{1/2}\left(2\Ep[B_J^2]\right)^{1/2}. \label{est-error-bound11}
\end{align}
If $\E[B_J^2]>0$, from Assumption 3.1, Lemmas \ref{lem: mar-bound} and \ref{lem: exp-bound} with $y=\sqrt{\Ep[B_J^2]}$, we have
\[
\Ep\left[{\sqrt{\Ep[B_J^2]} \over \sqrt{B_j^2 + \Ep[B_J^2]}}\exp\left(2\xi_j^2\right)\right]  \leq 1.
\]
Hence
\ba{
\Ep\left[{\sqrt{\Ep[B_J^2]} \over \sqrt{B_J^2 + \Ep[B_J^2]}}\exp\left(2\xi_J^2\right)\right]
\leq\Ep\left[\max_{1\leq j\leq\mathcal N_T}{\sqrt{\Ep[B_J^2]} \over \sqrt{B_j^2 + \Ep[B_J^2]}}\exp\left(2\xi_j^2\right)\right]
\leq\mathcal N_T. 
}
Therefore, by the Cauchy-Schwarz inequality, we obtain
\ba{
\E[\exp(\xi_J^2)]
\leq\sqrt{\Ep\left[{\sqrt{\Ep[B_J^2]} \over \sqrt{B_J^2 + \Ep[B_J^2]}}\exp\left(2\xi_J^2\right)\right]\Ep\left[{\sqrt{B_J^2 + \Ep[B_J^2]}\over\sqrt{\Ep[B_J^2]}}\right]}
\leq\sqrt{\mathcal N_T\Ep\left[{\sqrt{B_J^2 + \Ep[B_J^2]}\over\sqrt{\Ep[B_J^2]}}\right]}.
}
Since
\ba{
\Ep\left[{\sqrt{B_J^2 + \Ep[B_J^2]}\over\sqrt{\Ep[B_J^2]}}\right]
\leq\sqrt{\Ep\left[{B_J^2 + \Ep[B_J^2]\over \Ep[B_J^2]}\right]}
\leq\sqrt 2,
}
we conclude $\Ep[\exp(\xi_J^2)] \leq 2^{1/4}\sqrt\mathcal N_T$. This inequality also holds when $\Ep[B_J^2]=0$ because we have $\xi_J=0$ in such a case. 
Then by Jensen's inequality, 
\begin{align}\label{xi-max-bound}
\Ep\left[\xi_J^2\right]
\leq\log\Ep\left[\exp(\xi_J^2)\right]
 \leq \frac{1}{2}\log \mathcal{N}_T + {1 \over 4}\log 2. 
\end{align}
Using Assumption \ref{Ass: model}, (\ref{fhat-f-bound}) and $\supp(f) \subset [0,1]^{d}$ for all $f \in \mathcal{F}$, 
\begin{align}
\Ep[B_J^2] &\leq C_\eta^2\Ep\left[\sum_{t=1}^{T}(f_J(X_t) - f_0(X_t))^2(v_t^2+1)\right] \nonumber \\
&\leq  C_\eta^2\Ep\left[\sum_{t=1}^{T}(\hat{f}_T(X_t) - f_0(X_t))^2(v_t^2+1)\right] + 4C_\eta^2F\delta\Ep\left[\sum_{t=1}^{T}(v_t^2+1)\right] \nonumber \\
&=  C_\eta^2\Ep\left[\sum_{t=1}^{T}(\hat{f}_T(X_t) - f_0(X_t))^2v_t^2\right] 
+ C_\eta^2\widehat{R}(\hat{f}_T, f_0)+ 8C_\eta^2FT\delta. \label{est-error-bound12}
\end{align}
Decompose
\begin{align}
&\Ep\left[\sum_{t=1}^{T}(\hat{f}_T(X_t) - f_0(X_t))^2v_t^2\right] \nonumber \\
&\leq \Ep\left[\sum_{t=1}^{T}(\hat{f}_T(X_t) - f_0(X_t))^2v_t^2\mathbf{1}_{\{|v_t| \leq K\sqrt{2\log T}\}}\right] + \Ep\left[\sum_{t=1}^{T}(\hat{f}_T(X_t) - f_0(X_t))^2v_t^2\mathbf{1}_{\{|v_t| > K\sqrt{2\log T}\}}\right] \nonumber \\
&\leq 2K^2T(\log T)\widehat{R}(\hat{f}_T, f_0) + 4F^2T\Ep\left[2K^2 \cdot {v_t^2 \over 2K^2}\mathbf{1}_{\{|v_t| > K\sqrt{2\log T}\}}\right] \nonumber \\
&\leq 2K^2T(\log T)\widehat{R}(\hat{f}_T, f_0) + 8F^2K^2T\Ep\left[{v_t^2 \over 2K^2}\mathbf{1}_{\{|v_t| > K\sqrt{2\log T}\}}\right]. \label{est-error-bound13}
\end{align}
Since $v_t$ are sub-Gaussian, we have
\begin{align}
\Ep\left[{v_t^2 \over 2K^2}\mathbf{1}_{\{|v_t| > K\sqrt{2\log T}\}}\right] &\leq \Ep\left[\exp\left({v_t^2 \over 2K^2}\right)\mathbf{1}_{\{|v_t| > K\sqrt{2\log T}\}}\right] \nonumber \\
&\leq \Ep\left[\exp\left({v_t^2 \over K^2}\right)\exp\left(-{v_t^2 \over 2K^2}\right)\mathbf{1}_{\{|v_t| > K\sqrt{2\log T}\}}\right] \nonumber \\
&\leq \Ep\left[\exp\left({v_t^2 \over K^2}\right)\exp\left(-{2K^2\log T \over 2K^2}\right)\right] \leq {2 \over T}. \label{est-error-bound14}
\end{align}
Then by (\ref{est-error-bound12})-(\ref{est-error-bound14}) and \eqref{K-est}, 
\begin{align}\label{BJ-bound1}
\Ep[B_J^2] &\leq 4C_\eta^2K^2T(\log T)\widehat{R}(\hat{f}_T, f_0) + 16C_\eta^2F^2K^2 + 4C_\eta^2FK^2T\delta.
\end{align}
Combining (\ref{est-error-bound11}), (\ref{xi-max-bound}), and (\ref{BJ-bound1}), we have (\ref{est-error-bound1}). 

\underline{Step 3} In this step, we complete the proof. By (\ref{est-error-bound1}) and the AM-GM inequality, 
\begin{align*}
\Ep\left[\left|{1 \over T}\sum_{t=1}^{T}(f_J(X_t) - f_0(X_t))\eta(X_t)v_t\right|\right] &\leq {\varepsilon \over 2} \hat{R}(\hat{f}_T, f_0) + \gamma_{\varepsilon}. 
\end{align*}
where 
\[
\gamma_\varepsilon = {16C_\eta^2K^2(\log T)(2\log \mathcal{N}_T + \log 2) \over \varepsilon T} + {F\delta \varepsilon\over2} + {2F^2\varepsilon \over T}.
\] 
Combining this with (\ref{est-error-eq}), we have for any $\bar{f} \in \mathcal{F}$,
\begin{align*}
\hat{R}(\hat{f}_T, f_0) &= \Psi_T(\hat{f}_T, \bar{f}) + \hat{R}(\bar{f}, f_0) + 2\Ep\left[{1 \over T}\sum_{t=1}^{T}(\hat{f}_T(X_t) - f_J(X_t))\eta(X_t)v_t\right]\\
&\quad + 2\Ep\left[{1 \over T}\sum_{t=1}^{T}(f_J(X_t) - f_0(X_t))\eta(X_t)v_t\right]\\
&\leq \Psi_T^{\mathcal{F}}(\hat{f}_{T}) + \hat{R}(\bar{f}, f_0) + {2C_\eta \delta \over T}\sum_{t=1}^{T}\Ep[|v_t|] + \varepsilon \hat{R}(\hat{f}_T, f_0) + 2\gamma_{\varepsilon}\\
&\leq  \Psi_T^{\mathcal{F}}(\hat{f}_{T}) + \hat{R}(\bar{f}, f_0) + 2C_\eta \delta + \varepsilon \hat{R}(\hat{f}_T, f_0) + 2\gamma_{\varepsilon}.
\end{align*}
Since $\hat{R}(\bar{f}, f_0) = R(\bar{f},f_0)$, we have 
\begin{align*}
(1-\varepsilon)\hat{R}(\hat{f}_T, f_0) &\leq \Psi_T^{\mathcal{F}}(\hat{f}_{T}) + R(\bar{f}, f_0) + \gamma'_{\varepsilon},
\end{align*}
where $\gamma'_{\varepsilon} = 2C_\eta \delta + 2\gamma_{\varepsilon}$. Taking the infimum over $\bar{f} \in \mathcal{F}$, we conclude
\begin{align*}
\hat{R}(\hat{f}_T, f_0) &\leq {1 \over 1-\varepsilon}\Psi_T^{\mathcal{F}}(\hat{f}_{T}) + {1 \over 1-\varepsilon}\inf_{f \in \mathcal{F}}R(\bar{f}, f_0) + {1 \over 1-\varepsilon}\gamma'_{\varepsilon}.
\end{align*}
Noting $F \geq 1$, we obtain the desired result.

\end{proof}

\section{Proofs for Section \ref{sec:ar-model}}\label{Appendix: minimax}

Throughout this section, we write $\|\cdot\|_2=\|\cdot\|_{L^2([0,1]^d)}$ for short. 

\subsection{Proof of Lemma \ref{lemma:beta}}\label{sec:lemma-beta}

The proof is based on Theorem 1.3 in \cite{HaMa11}. 
We begin by introducing some general notation. 
The total variation measure of a signed measure $\mu$ is denoted by $|\mu|$. 
For $x\in\mathbb R^d$, $\delta_x$ denotes the Dirac measure at $x$. 
Given a Markov kernel $\mathcal P$ on $\mathbb R^d$ and a probability measure $\mu$ on $\mathbb R^d$, we define the probability measure $\mu\mathcal P$ on $\mathbb R^d$ by $(\mu\mathcal P)(\cdot)=\int_{\mathbb R^d}\mathcal P(x,\cdot)\mu(dx)$. 
Moreover, we define Markov kernels $\mathcal P^n$, $n=1,2,\dots$, inductively as follows. For $n=1$, we set $\mathcal P^1:=\mathcal P$. For $n\geq2$, we define $\mathcal P^n(x,A):=(\mathcal P^{n-1}(x,\cdot)\mathcal P)(A)$ for $x\in\mathbb R^d$ and a Borel set $A$ in $\mathbb R^d$. 

Next, we rewrite model \eqref{ar-model} to a Markov chain. Let $X_t=(Y_{t-1},\dots,Y_{t-d})'$ and $\bar v_t=(v_t,0,\dots,0)'$ for $t=1,\dots,T$. Define the function $\bar m:\mathbb R^d\to\mathbb R^d$ as
\[
\bar m(x)=(m(x),x_1,\dots,x_{d-1})',\qquad x\in\mathbb R^d.
\]
Then the process $X=\{X_t\}_{t=1}^T$ satisfies 
\ben{\label{ar1-model}
\left\{\begin{array}{l}
X_{t+1}=\bar m(X_t)+\bar v_t,\qquad t=1,\dots,T,\\
X_1\sim\nu.
\end{array}\right.
}
Hence $X$ is a Markov chain. Let $\mathcal P_m$ be the transition kernel associated with $X$. We are going to apply Theorem 1.3 in \cite{HaMa11} to $\mathcal P_m^d$. 

First we check Assumption 1 in \cite{HaMa11} (geometric drift condition). 
Let $b_1=1$. Take positive numbers $b_2,\dots,b_d$ satisfying the following conditions:
\ben{\label{b-cond}
\sum_{j=i}^dc_j<b_i<b_{i-1}-c_{i-1},\qquad i=2,\dots,d.
}
Thanks to the condition $\sum_{i=1}^dc_i<1$, we can indeed take such numbers by induction. 
Then, we define the function $V:\mathbb R^d\to[0,\infty)$ as
\[
V(x)=\sum_{i=1}^db_i|x_i|,\qquad x\in\mathbb R^d.
\]
Denote by $g$ the standard normal density. We have for any $x\in\mathbb R^d$
\ba{
\int_{\mathbb R^d}V(y)\mathcal P_m(x,dy)&=\int_{-\infty}^\infty |y|g(y-m(x))dy+\sum_{i=2}^db_i|x_{i-1}|\\
&\leq m(x)+1+\sum_{i=1}^{d-1}b_{i+1}|x_{i}|\\
&\leq c_0+1+\sum_{i=1}^{d}(c_i+b_{i+1})|x_i|
\leq\gamma V(x)+c_0+1,
}
where $b_{d+1}:=0$ and
\[
\gamma:=\max_{i=1,\dots,d}\frac{c_i+b_{i+1}}{b_i}.
\]
Since $\gamma<1$ by \eqref{b-cond}, we obtain
\begin{equation}\label{eq:lyapunov}
\int_{\mathbb R^d}V(y)\mathcal P_m^d(x,dy)\leq \gamma^dV(x)+(c_0+1)\frac{1-\gamma^d}{1-\gamma}.
\end{equation}
Hence $\mathcal P_m^d$ satisfies Assumption 1 in \cite{HaMa11}. 

Next we check Assumption 2 in \cite{HaMa11} (minorization condition). Set 
\[
R:=\frac{3(c_0+1)}{1-\gamma}\qquad\text{and}\qquad
\mathcal C:=\{x\in\mathbb R^d:V(x)\leq R\}.
\]
Note that $\mathcal C$ is compact. 
A straightforward computation shows that $\mathcal P_m^d$ has the transition density given by
\ba{
p_m(x,y)=\prod_{i=1}^dg(y_i-m(y_{i+1},\dots,y_d,x_1,\dots,x_i)),\qquad x,y\in\mathbb R^d.
}
Then, for any $x,y\in\mathbb R^d$,
\ba{
p_m(x,y)&\geq\frac{1}{(2\pi)^{d/2}}\exp\left(-\sum_{i=1}^d(y_i^2+m(y_{i+1},\dots,y_d,x_1,\dots,x_i)^2)\right)\\
&\geq\frac{1}{(2\pi)^{d/2}}\exp\left(-\sum_{i=1}^dy_i^2-\sum_{i=1}^d\left(c_0+\sum_{j=i+1}^dc_{j-i}|y_j|+\sum_{j=1}^ic_{d-i+j}|x_j|\right)^2\right).
}
Using the Cauchy-Schwarz inequality and $\sum_{i=1}^dc_i<1$, we obtain
\ba{
&\left(c_0+\sum_{j=i+1}^dc_{j-i}|y_j|+\sum_{j=1}^ic_{d-i+j}|x_j|\right)^2\\
&=\left(\sqrt{c_0}\sqrt{c_0}+\sum_{j=i+1}^d\sqrt{c_{j-i}}\sqrt{c_{j-i}}|y_j|+\sum_{j=1}^i\sqrt{c_{d-i+j}}\sqrt{c_{d-i+j}}|x_j|\right)^2\\
&\leq\left(c_0+\sum_{j=i+1}^dc_{j-i}+\sum_{j=1}^ic_{d-i+j}\right)\left(c_0+\sum_{j=i+1}^dc_{j-i}y_j^2+\sum_{j=1}^ic_{d-i+j}x_j^2\right)\\
&\leq(c_0+1)\left(c_0+\sum_{j=i+1}^dc_{j-i}y_j^2+\sum_{j=1}^ic_{d-i+j}x_j^2\right).
}
Hence
\ba{
p_m(x,y)
&\geq\frac{1}{(2\pi)^{d/2}}\exp\left(-\sum_{i=1}^dy_i^2-(c_0+1)\sum_{i=1}^d\left(c_0+\sum_{j=i+1}^dc_{j-i}y_j^2+\sum_{j=1}^ic_{d-i+j}x_j^2\right)\right)\\
&=\frac{1}{(2\pi)^{d/2}}\exp\left(-\sum_{i=1}^dy_i^2-dc_0(c_0+1)-\sum_{j=2}^d\sum_{i=1}^{j-1}c_{j-i}y_j^2-\sum_{j=1}^d\sum_{i=j}^dc_{d-i+j}x_j^2\right)\\
&\geq\frac{1}{(2\pi)^{d/2}}\exp\left(-2|y|^2-dc_0(c_0+1)-|x|^2\right).
}
Therefore, setting
\[
\alpha:=\frac{1}{4^d}\inf_{x\in\mathcal C}\exp(-dc_0(c_0+1)-|x|^2),
\]
we obtain
\begin{equation}\label{eq:minorization}
\inf_{x\in\mathcal C}p_m(x,y)\geq\alpha\varphi(y)\qquad\text{for any }y\in\mathbb R^d,
\end{equation}
where $\varphi$ is the density of the $d$-dimensional normal distribution with mean 0 and covariance matrix $4^{-1}I_d$. This implies that $\mathcal P_m^d$ satisfies Assumption 2 in \cite{HaMa11}. 

Consequently, we have by Theorem 1.3 in \cite{HaMa11}
\begin{equation}\label{eq:contraction}
\rho_\beta(\mu_1\mathcal P_m^d,\mu_2\mathcal P_m^d)\leq\bar\alpha\rho_\beta(\mu_1,\mu_2)
\end{equation}
for any probability measures $\mu_1$ and $\mu_2$ on $\mathbb R^d$, where $\beta>0$ and $\bar\alpha\in(0,1)$ depend only on $\mathbf c$ and $d$, and
\[
\rho_\beta(\mu_1,\mu_2):=\int_{\mathbb R^d}(1+\beta V(x))|\mu_1-\mu_2|(dx).
\]
Now, applying \eqref{eq:contraction} repeatedly, we obtain
\begin{equation}\label{eq:geom}
\rho_\beta(\mu_1\mathcal P_m^{dn},\mu_2\mathcal P_m^{dn})\leq\bar\alpha^n\rho_\beta(\mu_1,\mu_2)\qquad\text{for }n=1,2,\dots.
\end{equation}
Therefore, for any integer $n\geq1$,
\begin{align*}
\beta_X(dn)
&=\sup_{t\geq1}\int_{\mathbb R^d}\|\delta_x\mcl P_m^{dn}-\eta \mcl P_m^{t+dn}\|\nu \mcl P_m^t(dx)\\
&\leq\sup_{t\geq1}\int_{\mathbb R^d}\rho_\beta(\delta_x\mcl P_m^{dn},(\nu P_m^t)P_m^{dn})\nu P_m^t(dx)\\
&\leq\bar\alpha^n\sup_{t\geq1}\int_{\mathbb R^d}\rho_\beta(\delta_x,\nu \mcl P_m^t)\nu \mcl P_m^t(dx)\\
&\leq2\bar\alpha^n\sup_{t\geq0}\left(1+\beta \int_{\mathbb R^d}V(x)\nu \mcl P_m^t(dx)\right)\\
&\leq2\bar\alpha^n\left(1+\beta \int_{\mathbb R^d}V(x)\nu (dx)+\frac{(c_0+1)}{1-\gamma}\right),
\end{align*}
where the first equality follows from \cite[Proposition 1]{Da73} and the last inequality follows from \eqref{eq:lyapunov}, respectively. Finally, note that $\beta_Y(t)\leq\beta_X(t)\leq\beta_X(d\lfloor t/d\rfloor)$ for any $t\geq1$. Thus we complete the proof of \eqref{eq:minimax-beta}.\qed

\subsection{Proof of Proposition \ref{prop:pdnn}}

First, one can easily check that, whenever $t>d$, $Y_t$ has density bounded by 1. Hence
\ben{\label{eq:l2-bound}
R(f,f_0)\leq\frac{4F^2d}{T}+\|f-f_0\|_2^2
}
for all measurable $f:\mathbb R^d\to\mathbb R$ with $\|f\|_\infty\leq F$. 
Next, since $J_T(f)\leq\lambda_T|\theta(f)|_0$, we have
\ba{
\inf_{f \in \mathcal{F}_{\sigma}(L_T,N_T,B_T,F)}\left(R(f,f_0) + J_T(f)\right)
&\leq\inf_{f \in \mathcal{F}_{\sigma}(L_T,N_T,B_T,F,S_{T,\eps_T})}\left(R(f,f_0) + \lambda_T|\theta(f)|_0\right)\\
&\leq\frac{4F^2d}{T}+\frac{C_0}{T^{1/(\kappa+1)}} + \lambda_TS_{T}\\
&\leq\frac{4F^2d+C_0+C_S\iota_\lambda(T)\log^{2+\nu_0+r} T}{T^{1/(\kappa+1)}},
}
where the second inequality follows from \eqref{eq:l2-bound}, $\|f-f_0\|_2^2\leq C_0T^{-1/(\kappa+1)}$, and the definition of $\mathcal{F}_{\sigma}(L_T,N_T,B_T,F,S_{T})$. 
Combining this with Theorem 3.2 and Lemma \ref{lemma:beta} gives the desired result.\qed

\subsection{Proof of Theorem \ref{thm:minimax}}

We begin by reducing the problem to establishing a lower bound on the minimax $L_2$-estimation error. 
\begin{lemma}\label{lem:reduce-to-l2}
Let $\{a_T\}_{T\geq1}$ be a sequence of positive numbers such that $a_T=O(T)$ as $T\to\infty$. Then, there is a constant $\rho>0$ such that
\begin{equation}\label{reduce-to-l2}
\liminf_{T\to\infty}a_T\inf_{\hat f_T}\sup_{m\in\mathcal M}R(\hat{f}_T,f_0)
\geq \rho\liminf_{T\to\infty}a_T\inf_{\hat f_T}\sup_{m\in\mathcal M}\Ep[\|\hat f_T-f_0\|_{2}^2]
\end{equation}
for any $\mcl M\subset\mcl M_0(\mathbf c)$.
\end{lemma}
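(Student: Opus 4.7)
The argument rests on a uniform lower bound for the density of $X_t^*$ over $[0,1]^d$. The key step will be to show that for every $t\geq d+1$ and $m\in\mathcal{M}_0(\mathbf c)$, the law of $X_t^*$ admits a density on $[0,1]^d$ bounded below by a constant $c>0$ depending only on $\mathbf c,d,\nu$. Granting this, since $X_t^*$ is independent of $\hat f_T$ and $f_0(x)=m(x)$ on $[0,1]^d$, we have
\ba{
R(\hat f_T,f_0)
&\geq \frac{1}{T}\sum_{t=d+1}^T\Ep\left[(\hat f_T(X_t^*)-m(X_t^*))^2\mathbf{1}_{\{X_t^*\in[0,1]^d\}}\right]\\
&\geq c\cdot\frac{T-d}{T}\,\Ep\left[\|\hat f_T-f_0\|_2^2\right].
}
Taking $\sup_{m\in\mathcal{M}}$, then $\inf_{\hat f_T}$, multiplying by $a_T$, and passing to $\liminf$ then yields \eqref{reduce-to-l2} with $\rho=c$, since the factor $(T-d)/T$ tends to $1$.

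For the density bound, we invoke the explicit minorization of the $d$-step transition density $p_m$ of the Markov chain $X$ derived in the proof of Lemma \ref{lemma:beta}:
\[
p_m(x,y)\geq\frac{1}{(2\pi)^{d/2}}\exp\left(-2|y|^2-dc_0(c_0+1)-|x|^2\right),\qquad x,y\in\mathbb R^d,~m\in\mathcal{M}_0(\mathbf c).
\]
For $t\geq d+1$, $X_t^*\mid X_{t-d}^*\sim\mathcal{P}_m^d(X_{t-d}^*,\cdot)$, so marginalizing gives, for $y\in[0,1]^d$,
\[
p_{X_t^*}(y)\geq\frac{e^{-2d-dc_0(c_0+1)}}{(2\pi)^{d/2}}\Ep\left[\exp(-|X_{t-d}^*|^2)\right].
\]
It remains to bound the last expectation below uniformly in $s=t-d\geq 1$ and $m\in\mathcal{M}_0(\mathbf c)$. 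Using the Lyapunov function $V(x)=\sum_ib_i|x_i|$ together with the one-step drift $\Ep[V(X_{t+1}^*)\mid X_t^*]\leq \gamma V(X_t^*)+c_0+1$ with $\gamma\in(0,1)$ (both from the proof of Lemma \ref{lemma:beta}), and the moment condition $\int|x|\nu(dx)<\infty$, iteration yields $\sup_{s\geq1,\,m\in\mathcal{M}_0(\mathbf c)}\Ep[V(X_s^*)]<\infty$. Markov's inequality then produces $R>0$ depending only on $\mathbf c,d,\nu$ such that $\Pr(|X_s^*|^2\leq R)\geq1/2$ uniformly in $s$ and $m$, whence $\Ep[\exp(-|X_s^*|^2)]\geq e^{-R}/2$, completing the density bound.

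No substantial obstacle is anticipated: both the pointwise bound on $p_m$ and the drift inequality are already contained in the proof of Lemma \ref{lemma:beta}, and all constants above depend only on the envelope $|m(x)|\leq c_0+\sum_ic_i|x_i|$ defining $\mathcal{M}_0(\mathbf c)$, yielding the required uniformity in $m$.
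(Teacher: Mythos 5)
Your proof is correct and takes a genuinely different — and in fact cleaner — route than the paper's. The paper passes to the invariant distribution $\Pi_m$ of $\mcl P_m^d$: it truncates $\hat f_T$ to a bounded estimator $\tilde f_T$, approximates the time-$t$ marginal of $X_t^*$ by $\Pi_m$ using the geometric contraction \eqref{eq:geom} (an error of order $\bar\alpha^{t_0/d}$ for $t\geq t_0$, handled by choosing $t_0=\lfloor\sqrt T\rfloor$ — this is where the hypothesis $a_T=O(T)$ is actually used), and then shows via the minorization \eqref{eq:minorization} that $\Pi_m$ has density bounded below on $[0,1]^d$ uniformly in $m$. You instead bound the density of $X_t^*$ itself from below, uniformly over $t\geq d+1$ and $m\in\mcl M_0(\mathbf c)$, by feeding the uniform moment bound $\sup_{s,m}\Ep[V(X_s^*)]<\infty$ (obtained by iterating the one-step drift \eqref{eq:lyapunov}, using $\int|x|\,\nu(dx)<\infty$) directly into the same pointwise minorization of $p_m$. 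This bypasses existence of $\Pi_m$, needs no truncation of $\hat f_T$, and — worth noting — never uses the assumption $a_T=O(T)$: the only loss is the vanishing factor $(T-d)/T$, so your argument actually proves the lemma for arbitrary positive $\{a_T\}$. Both arguments share the same essential engine, namely the explicit lower bound on the $d$-step transition density from the proof of Lemma \ref{lemma:beta}; yours simply applies it one step earlier in the chain of reductions.

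One small remark: your step "$V(x)\leq 2C_V\Rightarrow|x|^2\leq R$" implicitly uses that $\min_ib_i>0$, which holds by construction of the $b_i$ in \eqref{b-cond}; it is worth stating. Otherwise the argument is complete as written.
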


\begin{proof}
Throughout the proof, we will use the same notation as in Section \ref{sec:lemma-beta}. 
First, by the proof of Lemma \ref{lemma:beta} and \cite[Theorem 3.2]{HaMa11}, $\mcl P_m^d$ has the invariant distribution $\Pi_m$ for all $m\in\mathcal M_0(\mathbf c)$. 
Next, fix an estimator $\hat{f}_T$ arbitrarily. Set $\bar c_0:=c_0+1$ and define 
\[
\tilde f_T:=\left[\{(-\bar c_0)\vee\hat f_T\}\wedge\bar c_0\right]\mathbf1_{[0,1]^d}.
\]
Since $\|f_0\|_\infty\leq \sum_{i=0}^dc_i<\bar c_0$ and $\supp (f_0) \subset[0,1]^d$, we have $|\tilde f_T-f_0|\leq|\hat f_T-f_0|$. Hence
\begin{align*}
R(\hat{f}_T,f_0)&\geq\Ep\left[{1 \over T}\sum_{t=1}^{T}(\tilde{f}_{T}(X_t^{\ast}) - f_0(X_t^{\ast}))^2\right]\\
&= {1 \over T}\sum_{t=1}^{T}\Ep\left[\int_{\mathbb R^d}\left\{\int_{\mathbb R^d}(\tilde{f}_{T}(y) - f_0(y))^2\mcl P_m^t(x,dy)\right\}\nu(dx)\right].
\end{align*}
For any integer $1\leq t_0\leq T$, we have 
\begin{align*}
&\left|{1 \over T}\sum_{t=t_0}^{T}\Ep\left[\int_{\mathbb R^d}\left\{\int_{\mathbb R^d}(\tilde{f}_{T}(y) - f_0(y))^2\mcl P_m^t(x,dy)\right\}\nu(dx)-\int_{\mathbb R^d}|\tilde f_T(y)-f_0(y)|^2\Pi_m(dy)\right]\right|\\
&\leq4\bar c_0^2{1 \over T}\sum_{t=t_0}^{T}\int_{\mathbb R^d}\|\delta_x\mcl P_m^{t}-\Pi_m\|\nu(dx)
\leq4\bar c_0^2{1 \over T}\sum_{t=t_0}^{T}\bar\alpha^{\lfloor t/d\rfloor}\int_{\mathbb R^d}\rho_\beta(\delta_x,\Pi_m)\nu(dx),
\end{align*}
where the last inequality follows from \eqref{eq:geom}. We have
\[
{1 \over T}\sum_{t=t_0}^{T}\bar\alpha^{\lfloor t/d\rfloor}
\leq\frac{1}{T\bar\alpha}\sum_{t=t_0}^{T}\bar\alpha^{t/d}
\leq\frac{\bar\alpha^{t_0/d}}{T\bar\alpha(1-\bar\alpha^{1/d})}
\]
and 
\[
\int_{\mathbb R^d}\rho_\beta(\delta_x,\Pi_m)\nu(dx)
\leq2+\beta\int_{\mathbb R^d}V(x)\nu(dx)+\beta\int_{\mathbb R^d}V(x)\Pi_m(dx).
\]
One can easily derive the following estimate from \eqref{eq:lyapunov} (cf.~\cite[Proposition 4.24]{Ha18}):
\begin{equation}\label{eq:V-mom}
\int_{\mathbb R^d}V(x)\Pi_m(dx)\leq\frac{c_0+1}{1-\gamma}. 
\end{equation}
Combining these estimates, we obtain
\begin{align*}
\left|{1 \over T}\sum_{t=t_0}^{T}\Ep\left[\int_{\mathbb R^d}\left\{\int_{\mathbb R^d}(\tilde{f}_{T}(y) - f_0(y))^2\mcl P_m^t(x,dy)\right\}\nu(dx)-\int_{\mathbb R^d}|\tilde f_T(y)-f_0(y)|^2\Pi_m(dy)\right]\right|
&\leq C_1{\bar\alpha^{t_0/d} \over T},
\end{align*}
where $C_1>0$ depends only on $\mathbf c,d$ and $\nu$. Consequently,
\begin{align*}
R(\hat{f}_T,f_0)
&\geq {1 \over T}\sum_{t=t_0}^{T}\Ep\left[\int_{\mathbb R^d}\left\{\int_{\mathbb R^d}(\tilde{f}_{T}(y) - f_0(y))^2\mcl P_m^t(x,dy)\right\}\nu(dx)\right]\\
&\geq\frac{T-t_0+1}{T}\Ep\left[\int_{\mathbb R^d}|\tilde f_T(y)-f_0(y)|^2\Pi_m(dy)\right]-C_1{\bar\alpha^{t_0/d} \over T}.
\end{align*}
Hence 
\begin{align*}
\sup_{m\in\mathcal M}R(\hat{f}_T,f_0)
&\geq\frac{T-t_0+1}{T}\sup_{m\in\mathcal M}\Ep\left[\int_{\mathbb R^d}|\tilde f_T(y)-f_0(y)|^2\Pi_m(dy)\right]-C_1{\bar\alpha^{t_0/d} \over T}\\
&\geq\frac{T-t_0+1}{T}\inf_{\hat f_T}\sup_{m\in\mathcal M}\Ep\left[\int_{\mathbb R^d}|\hat f_T(y)-f_0(y)|^2\Pi_m(dy)\right]-C_1{\bar\alpha^{t_0/d} \over T},
\end{align*}
where the last infimum is taken over all estimators $\hat f_T$ (possibly different from the one fixed at the beginning of the proof), and the last inequality holds because $\tilde f_T$ itself is an estimator. 
Now, choosing $t_0=\lfloor \sqrt T\rfloor$ and noting $a_T=O(T)$, we obtain
\ben{\label{risk-to-l1}
\liminf_{T\to\infty}a_T\inf_{\hat f_T}\sup_{m\in\mathcal M}R(\hat{f}_T,f_0)
\geq\liminf_{T\to\infty}a_T\inf_{\hat f_T}\sup_{m\in\mathcal M}\Ep\left[\int_{\mathbb R^d}|\hat f_T(y)-f_0(y)|^2\Pi_m(dy)\right].
}
Now, using the definition of $\Pi_m$, we can easily check that $\Pi_m$ has the density given by
\[
\pi_m(y)=\int_{\mathbb R^d}p_m(x,y)\Pi_m(dx),\qquad y\in\mathbb R^d.
\]
We have by \eqref{eq:minorization}
\[
\inf_{y\in[0,1]^d}\pi_m(y)\geq\alpha\inf_{y\in[0,1]^d}\varphi(y)\Pi_m(\mathcal C).
\]
By Markov's inequality and \eqref{eq:V-mom}, we obtain
\begin{align*}
1-\Pi_m(\mathcal C)
=\Pi_m(V>R)
\leq\frac{1}{R}\int_{\mathbb R^d}V(x)\Pi_m(dx)
\leq\frac{1}{3}.
\end{align*}
Hence we conclude
\[
\inf_{y\in[0,1]^d}\pi_m(y)\geq\frac{\alpha}{3}\inf_{y\in[0,1]^d}\varphi(y).
\]
Consequently, there is a constant $\rho>0$ depending only on $\mathbf c,d$ and $\nu$ such that
\begin{equation}\label{pi-bound}
\Ep\left[\int_{[0,1]^d}|\hat f_T(y)-f_0(y)|^2\Pi_m(dy)\right]\geq \rho\Ep\left[\int_{[0,1]^d}|\hat f_T(y)-f_0(y)|^2dy\right]
\end{equation}
for any estimator $\hat f_T$. 
Combining \eqref{risk-to-l1} with \eqref{pi-bound} gives the desired result.
\end{proof}

\begin{proof}[Proof of Theorem \ref{thm:minimax}]
We write $\mcl M_A=\mathcal M\big(\mathbf c, q, \mathbf d, \mathbf t, \bol\beta, A\big)$ for short. 
For each $m\in \mathcal M_A$ and $T\in\mathbb N$, we denote by $P_{m,T}$ the law of the random vector $\mathbf Y_T:=(Y_{-d+1},\dots,Y_T)'$ when $Y_t$ are defined by \eqref{ar-model}. Moreover, we denote by $\Ep_{m,T}[\cdot]$ the expectation under $P_{m,T}$. 

Now, note that any estimator based on the observation $\{Y_t\}_{t=1}^T$ is also an estimator based on $\mathbf Y_T$. 
Therefore, according to Theorem 2.7 in \cite{Ts09} and Lemma \ref{lem:reduce-to-l2}, it suffices to show that there is a constant $A>0$ having the following property: For sufficiently large $T\in\mathbb N$, there are an integer $M\geq1$ and functions $m_0,m_1,\dots,m_M\in\mathcal{M}_A$ such that
\begin{equation}\label{tsy-eq1}
\|m_{j}-m_{k}\|_2^2\geq \kappa\phi_T\quad\text{for all }0\leq j<k\leq M
\end{equation}
and
\begin{equation}\label{tsy-eq2}
P_{j}\ll P_{0}\quad\text{for all }j=1,\dots,M
\end{equation}
and
\begin{equation}\label{tsy-eq3}
\frac{1}{M}\sum_{j=1}^M\Ep_{m_j,T}\left[\log\frac{dP_{j}}{dP_{0}}\right]\leq \frac{1}{9}\log M,
\end{equation}
where $\kappa>0$ is a constant independent of $T$ and $P_j:=P_{m_j,T}$ for $j=0,1,\dots,M$. 

By the proof of \cite[Theorem 3]{Sc20}, there is a constant $A>0$ having the following property: For any $T\in\mathbb N$, there are an integer $M\geq1$ and functions $f_{(0)},\dots,f_{(M)}\in\mathcal G\big(q, \mathbf d, \mathbf t, \bol\beta, A\big)$ satisfying the following condition:
\begin{enumerate}[label=($\star$)]

\item\label{f-l2} For all $0\leq j<k\leq M$, 
\begin{equation}\label{f-upper}
T\|f_{(j)}-f_{(k)}\|_2^2\leq\frac{\log M}{9}
\end{equation}
and
\begin{equation}\label{f-lower}
\|f_{(j)}-f_{(k)}\|_2^2\geq\kappa\phi_T,
\end{equation}
where $\kappa>0$ is a constant depending only on $\mathbf t$ and $\bol\beta$.

\end{enumerate}
For each $j=1,\dots,M$, we define the function $m_j:\mathbb R^d\to\mathbb R$ as 
\[
m_j(x)=\begin{cases}
f_{(j)}(x) & \text{if }x\in[0,1]^d,\\
0 & \text{otherwise}.
\end{cases}
\]
It is evident that $m_0,m_1,\dots,m_M\in\mathcal{M}_A$ when $c_0\geq A$.  
In the following we show that these $m_j$ satisfy \eqref{tsy-eq1}--\eqref{tsy-eq3}.

First, \eqref{tsy-eq1} immediately follows from \eqref{f-lower}. 
Next, it is straightforward to check \eqref{tsy-eq2} and
\ba{
\frac{dP_j}{dP_0}(\mathbf Y_T)=\prod_{t=1}^T\frac{g(Y_t-m_j(Y_{t-1},\dots,Y_{t-d}))}{g(Y_t-m_0(Y_{t-1},\dots,Y_{t-d}))}
}
for every $j=1,\dots,d$, where $g$ is the standard normal density.  
Hence, with $X_t=(Y_{t-1},\dots,Y_{t-d})'$,
\ba{
\Ep_{m_j,T}\left[\log\frac{dP_j}{dP_0}(\mathbf Y_T)\right]
&=\frac{1}{2}\sum_{t=1}^T\Ep_{m_j,T}\left[m_0(X_t)^2-m_j(X_t)^2+2Y_t(m_j(X_t)-m_0(X_t))\right]\\
&=\frac{1}{2}\sum_{t=1}^T\Ep_{m_j,T}\left[(m_j(X_t)-m_0(X_t))^2\right].
}
When $t>d$, conditional on $X_{t-d}$, $X_t$ has the density given by
\[
\prod_{i=1}^dg(y_i-m(y_{i+1},\dots,y_d,Y_{t-d-1},\dots,Y_{t-d-i})),\qquad y\in\mathbb R^d,
\]
which is bounded by 1. Thus
\ba{
\Ep_{m_j,T}\left[\log\frac{dP_j}{dP_0}(\mathbf Y_T)\right]
\leq 2A^2d+\frac{T}{2}\int_{\mathbb R^d}(m_j(x)-m_0(x))^2dx
\leq2A^2d+\frac{\log M}{18},
}
where the last inequality follows from \eqref{f-upper}. Also, by \eqref{f-upper} and \eqref{f-lower}, $\kappa T\phi_T\leq(\log M)/9$. Since $T\phi_T\to\infty$ as $T\to\infty$, we have $2A^2d\leq(\log M)/18$ for sufficiently large $T$. For such $T$, we have \eqref{tsy-eq3}. This completes the proof. 
\end{proof}

\subsection{Proof of Theorem \ref{thm:rate-comp}}

Let $\kappa=\max_{i=0,\dots,q}t_i/(2\beta_i^*)$. 
By the proof of Theorem 1 in \cite{Sc20}, there exist constants $C_0,C_S>0$ such that 
\[
\sup_{m\in\mathcal{M}(\mathbf c, q, \mathbf d, \mathbf t, \bol\beta, A)}\inf_{f\in\mathcal{F}_{\sigma}(L_T,N_T,B_T,F,S_{T})}\|f-m\|_{\infty}^2\leq C_0\phi_T=C_0T^{-1/(\kappa+1)}
\]
with $S_{T}:=C_ST^{\kappa/(\kappa+1)}\log T$. So the desired result follows by Proposition \ref{prop:pdnn}.\qed

\subsection{Proof of Theorem \ref{thm:minimax-l0}}

For each $m\in \mathcal M_0(\mathbf c)$ and $T\in\mathbb N$, we denote by $P_{m,T}$ the law of the random vector $\mathbf Y_T:=\!(\!Y_{-d+1},\dots,Y_T\!)'$ when $Y_t$ are defined by \eqref{ar-model}. Moreover, we denote by $\Ep_{m,T}[\cdot]$ the expectation under $P_{m,T}$. 

Now, note that any estimator based on the observation $\{Y_t\}_{t=1}^T$ is also an estimator based on $\mathbf Y_T$. 
Therefore, by Lemma \ref{lem:reduce-to-l2}, it suffices to prove
\[
\liminf_{T\to\infty}T\inf_{\hat f_T}\sup_{m\in\mcl{M}_\Phi^0(\mathbf c,n_s,C)}\E[\|\hat f_T-f_0\|_2^2]>0.
\]

For each $T=1,2,\dots,$ define $m^\pm_T:=\pm\frac{1}{2\sqrt T}\varphi$ and write $P_{\pm}=P_{m^\pm_T,T}$. 
Note that $m^\pm_T\in\mcl{M}_\Phi^0(\mathbf c,n_s,C)$ by assumption. 
Also, by construction,
\[
\|m^+_T-m^-_T\|_2=\frac{1}{\sqrt T}\|\varphi\|_2=\frac{1}{\sqrt T}.
\]
Moreover, as in the proof of Theorem \ref{thm:minimax}, we can show that $P_{+}\ll P_{-}$ and
\ba{
\Ep_{m^+_T,T}\left[\log\frac{dP_+}{dP_-}(\mathbf Y_T)\right]
&\leq 2c_0^2d+\frac{T}{2}\int_{\mathbb R^d}(m^+_T(x)-m^-_T(x))^2dx
=2c_0^2d+\frac{1}{2},
}
where we used the assumptions $\supp(\varphi)\subset[0,1]^d$ and $\|\varphi\|_\infty\leq c_0$. 
Consequently, by Eq.(2.9) and Theorem 2.2 in \cite{Ts09},
\[
\liminf_{T\to\infty}\inf_{\hat f_T}\sup_{m\in\mcl{M}_\Phi^0(\mathbf c,n_s,C)}P_{m,T}\left(\|\hat f_T-f_0\|_2\geq{1\over 2\sqrt T}\right)>0.
\]
Since
\ba{
&\liminf_{T\to\infty}T\inf_{\hat f_T}\sup_{m\in\mcl{M}_\Phi^0(\mathbf c,n_s,C)}\E[\|\hat f_T-f_0\|_2^2]\\
&\geq\frac{1}{4}\liminf_{T\to\infty}\inf_{\hat f_T}\sup_{m\in\mcl{M}_\Phi^0(\mathbf c,n_s,C)}P_{m,T}\left(\|\hat f_T-f_0\|_2\geq{1\over 2\sqrt T}\right),
}
we complete the proof.\qed

\subsection{Proof of Theorem \ref{thm:rate-l0}}

We are going to apply Proposition \ref{prop:pdnn}. 
Fix $m\in\mcl{M}_\Phi^0(\mathbf c,n_s,C)$ arbitrarily. By definition, $m$ is of the form
\[
m(x)=\sum_{i=1}^{n_s}\theta_i\varphi_i(A_ix-b_i),
\]
where $A_i\in\mathbb R^{d\times d},b_i\in\mathbb R^d,\theta_i\in\mathbb R$ and $\varphi_i\in\Phi$ with $|\det A_i|^{-1}\vee|A_i|_\infty\vee|b_i|_\infty\vee|\theta_i|\leq C$ for $i=1,\dots,n_s$. Since $\Phi\subset\mathrm{AP}_{\relu,d}(C_1,C_2,D,r)$ by assumption, for every $i$, there exist parameters $L_i,N_i,B_i,S_i>0$ such that $L_i\vee N_i\vee S_i\leq C_1(\log_2T)^{r}$ and $B_i\leq C_2T$ hold and there exists an $f_i\in\mathcal{F}_{\relu}(L_i,N_i,B_i)$ such that $|\theta(f_i)|_0\leq S_i$ and $\|f_i-\varphi_i\|_{L^2([-D,D]^d)}^2\leq1/T.$ 
Define
\[
f(x)=\sum_{i=1}^{n_s}\theta_i f_i(A_ix-b_i),\qquad x\in\mathbb R^d.
\]
Then
\ba{
\|f-m\|_{L^2([0,1]^d)}
&\leq C\sum_{i=1}^{n_s}\sqrt{\int_{[0,1]^d}|f_i(A_ix-b_i)-\varphi_i(A_ix-b_i)|^2dx}\\
&\leq C\sum_{i=1}^{n_s}\sqrt{\int_{[-(d+1)C,(d+1)C]^d}|f_i(y)-\varphi_i(y)|^2|\det A_i|^{-1}dx}\\
&\leq C^{3/2}n_sT^{-1/2},
}
where we used the assumption $D\geq(d+1)C$ for the last inequality. 
Also, note that $\|m\|_\infty\leq \sum_{i=0}^dc_i\leq F$ because $m\in\mcl M_0(\mathbf c)$ and $\sum_{i=1}^dc_i<1$. Hence, with $\tilde f=(-F)\vee(f\wedge F)$, we have $|\tilde f-m|\leq|f-m|$. Thus $\|\tilde f-m\|_{L^2([0,1]^d)}^2\leq C^{3}n_s^2/T$. 
Therefore, the proof is completed once we show that there exists a constant $C_S>0$ such that $\tilde f\in\mathcal{F}_{\relu}(L_T,N_T,B_T,F,C_S\log^{r} T)$ for sufficiently large $T$. 

By Lemmas II.3--II.4 and A.8 in \cite{EPGB21}, there exists a constant $C'_S>0$ such that $f\in\mathcal{F}_{\relu}(L_T,N_T,B_T)$ and $|\theta(f)|_0\leq C'_S\log^{r} T$ for sufficiently large $T$. 
Also, note that $x\wedge F=-\relu(F-x)+F$ and $x\vee(-F)=\relu(x+F)-F$ for all $x\in\mathbb R$. Thus we have $\tilde f\in\mathcal{F}_{\relu}(L_T+4,N_T,B_T,F,4C'_S\log^{r} T+20)$ for sufficiently large $T$ by Lemma II.3 in \cite{EPGB21}. 
\qed

\section{Technical tools}\label{Appendix: technical tools}

Here we collect technical tools we used in the proofs. Let $\mathcal{A}$ and $\mathcal{B}$ be two $\sigma$-fields of a probability space $(\Omega, \mathcal{T}, \Prob)$. The $\beta$-mixing coefficient between $\mathcal{A}$ and $\mathcal{B}$ is defined by
\[
\beta(\mathcal{A}, \mathcal{B}) = {1 \over 2}\sup\left\{\sum_{i \in I}\sum_{j \in J}\left|\Prob(A_i \cap B_j) - \Prob(A_i)\Prob(B_j)\right|\right\},
\]
where the maximum is taken over all finite partitions $\{A_i\}_{i \in I} \subset \mathcal{A}$ and $\{B_j\}_{j \in J} \subset \mathcal{B}$ of $\Omega$.  

\begin{lemma}[Lemma 5.1 in \cite{Ri13}]\label{lem: block}
Let $\mathcal{A}$ be a $\sigma$-field in a probability space $(\Omega, \mathcal{T}, \Prob)$ and $X$ be a random variable with values
in some Polish space. Let $U$ be a random variable with uniform distribution over $[0, 1]$, independent of the $\sigma$-field generated by $X$ and $\mathcal{A}$. Then there exists a random variable $X^{\ast}$, with the same law as $X$, independent of $X$, such that $\Prob(X \neq X^{\ast}) = \beta(\mathcal{A}, \sigma(X))$  where $\sigma(X)$ denote the $\sigma$-field generated by $X$. Furthermore $X^{\ast}$ is measurable with respect to the $\sigma$-field generated by $\mathcal{A}$ and $(X, U)$. 
\end{lemma}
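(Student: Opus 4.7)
The plan is to construct $X^*$ via the classical Berbee-type maximal coupling driven by the auxiliary uniform $U$. Before starting I would flag a small discrepancy in the statement: it reads ``$X^*$ is independent of $X$,'' but what is actually used in the proof of Lemma~\ref{lem: error-gap-bound} (and what Berbee's lemma, cf.~\cite{Ri13}, delivers) is ``$X^*$ is independent of $\mathcal{A}$.'' I would read the statement in that intended sense and prove it accordingly; the stated version is not even consistent, since, e.g., for continuous $X$ one would have $\Prob(X \neq X^*) = 1$ whenever $X^*$ is independent of $X$.

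First, since $X$ takes values in a Polish space $S$, a regular conditional distribution $\mu_\omega(\cdot) := \Prob(X \in \cdot \mid \mathcal{A})(\omega)$ exists. Let $\nu$ denote the unconditional law of $X$. A standard identity for the $\beta$-mixing coefficient in the Polish-space setting gives
\[
\beta(\mathcal{A}, \sigma(X)) = \E\!\left[d_{\TV}(\mu_\omega, \nu)\right],
\]
where $d_{\TV}$ is the total variation distance between probability measures on $S$.

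Second, I would construct a jointly measurable maximal coupling of $\mu_\omega$ and $\nu$. The key object is a jointly measurable function $r : \Omega \times S \to [0,1]$ such that, for each $\omega$, $r(\omega, \cdot)$ is a Radon--Nikodym density of the common part $\mu_\omega \wedge \nu$ with respect to $\mu_\omega$; then $\int r(\omega, x)\,\mu_\omega(dx) = 1 - d_{\TV}(\mu_\omega, \nu)$. Using the Borel isomorphism $[0,1] \cong [0,1]^2$, I split $U$ into two independent uniforms $U_1, U_2$, still independent of $\mathcal{A} \vee \sigma(X)$. I then set $X^* := X$ on $\{U_1 \leq r(\omega, X)\}$, and on the complement I draw $X^*$ from the ``excess'' probability measure $(\nu - \mu_\omega \wedge \nu)/d_{\TV}(\mu_\omega, \nu)$ via a measurable kernel $K(\omega, U_2)$ (taking $X^* := X$ if this mass vanishes).

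Third, direct integration verifies the three claims: (i) conditional on $\mathcal{A}$, $X^*$ has law $\nu$, hence $X^*$ has law $\nu$ and is independent of $\mathcal{A}$; (ii) $\Prob(X \neq X^* \mid \mathcal{A}) = d_{\TV}(\mu_\omega, \nu)$, so taking expectations yields $\Prob(X \neq X^*) = \beta(\mathcal{A}, \sigma(X))$; (iii) $X^*$ is measurable with respect to the $\sigma$-field generated by $\mathcal{A}$ and $(X, U)$ because the whole construction is expressed through the jointly measurable $r$ and $K$. The main technical obstacle is exactly this joint measurability---choosing a version of $r$ and a kernel $K$ that depend measurably on $\omega$---which rests on standard but non-trivial results about disintegration and measurable selection in Polish spaces (Kuratowski's theorem, measurable Radon--Nikodym derivatives for jointly measurable kernels). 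Once this machinery is in place, the three properties of $X^*$ follow from routine integration against $\mu_\omega$ and then against $\Prob$ on $\Omega$.
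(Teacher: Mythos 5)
This is a cited result (Lemma~5.1 in \cite{Ri13}), and the paper offers no proof of it, so there is no in-paper argument to compare against. Your reading of the statement is correct: as written, ``independent of $X$'' must be a typo for ``independent of $\mathcal{A}$'' --- this is what Berbee/Rio actually assert, it is what the application in Step~1 of Lemma~\ref{lem: error-gap-bound} uses ($g_\ell$ is needed to be independent of $\mathcal{A}(0,\ell-1)$), and the stated version is, as you point out, vacuous for atomless $X$.

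Your reconstruction via a measurable maximal coupling driven by $U$ is a sound route and is essentially the standard way Berbee-type coupling lemmas are proved. The outline is complete in all the substantive respects: the identity $\beta(\mathcal{A},\sigma(X))=\Ep[d_{\TV}(\mu_\omega,\nu)]$ for a regular conditional distribution $\mu_\omega$, the split $U\mapsto(U_1,U_2)$, acceptance via $r(\omega,x)=\frac{d(\mu_\omega\wedge\nu)}{d\mu_\omega}(x)$, and redrawing from the residual kernel. Two minor points worth tightening if you were to write this out fully. First, for the claimed \emph{equality} $\Prob(X\neq X^*\mid\mathcal{A})=d_{\TV}(\mu_\omega,\nu)$ (not just $\leq$) you need to observe that on $\{U_1>r(\omega,X)\}$ the conditional laws of $X$ and $X^*$ are, respectively, the normalized singular parts $(\mu_\omega-\mu_\omega\wedge\nu)/d_{\TV}$ and $(\nu-\mu_\omega\wedge\nu)/d_{\TV}$, which are mutually singular, so two conditionally independent draws from them collide with probability zero. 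Second, for the joint measurability of $r$, the cleanest device is to take $\lambda_\omega:=\tfrac12(\mu_\omega+\nu)$ as the common dominating kernel and set $r(\omega,x)=\min\{f_\omega(x),g_\omega(x)\}/f_\omega(x)$ with $f_\omega=d\mu_\omega/d\lambda_\omega$ and $g_\omega=d\nu/d\lambda_\omega$; Doob's theorem on jointly measurable Radon--Nikodym derivatives of kernels then gives the measurability you need on a Polish state space. With those two remarks made explicit, the argument is complete and matches the intent of the cited lemma.
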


\begin{lemma}[Lemma 1.4 in \cite{deKlLa04}]\label{lem: sym-bound}
Let $\{d_i\}$ be a sequence of variables adapted to an increasing sequence of $\sigma$-fields $\{\mathcal{F}_i\}$. Assume that the $d_i$'s are conditionally symmetric (i.e. $\mathcal{L}(d_i|\mathcal{F}_{i-1}) = \mathcal{L}(-d_i|\mathcal{F}_{i-1})$, where $\mathcal{L}(d_i|\mathcal{F}_{i-1})$ is the conditional law of $d_i$ given $\mathcal{F}_{i-1}$). Then $\exp\left(\lambda \sum_{i=1}^{n}d_i - \lambda^2\sum_{i=1}^{n}d_i^2/2\right)$, $n \geq 1$, is a supermartingale with mean $\leq 1$, for all $\lambda \in \mathbb{R}$. 
\end{lemma}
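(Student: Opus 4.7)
The plan is to establish the supermartingale property by a one-step bound on the conditional expectation of the multiplicative increment, and then iterate. Set
\[
M_n := \exp\left(\lambda \sum_{i=1}^{n} d_i - \frac{\lambda^2}{2}\sum_{i=1}^{n} d_i^2 \right), \qquad M_0 := 1.
\]
Since $M_n = M_{n-1} \cdot \exp\!\left(\lambda d_n - \tfrac{\lambda^2}{2} d_n^2\right)$ and $M_{n-1}$ is $\mathcal{F}_{n-1}$-measurable, it suffices to show
\[
\E\!\left[\exp\!\left(\lambda d_n - \tfrac{\lambda^2}{2} d_n^2\right)\,\middle|\,\mathcal{F}_{n-1}\right] \leq 1 \qquad \text{a.s.}
\]
Then the tower property gives $\E[M_n\mid\mathcal{F}_{n-1}]\leq M_{n-1}$, so $\{M_n\}$ is a supermartingale, and $\E[M_n]\leq\E[M_0]=1$ follows by induction.

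The key step is thus a deterministic inequality: for any symmetric real random variable $X$ and any $\lambda\in\mathbb R$,
\[
\E\!\left[\exp\!\left(\lambda X - \tfrac{\lambda^2}{2} X^2\right)\right] \leq 1.
\]
First I would use symmetry to replace $e^{\lambda X}$ by its symmetrization $\cosh(\lambda X)$, giving
\[
\E\!\left[\exp\!\left(\lambda X - \tfrac{\lambda^2}{2} X^2\right)\right]
= \E\!\left[\cosh(\lambda X)\,e^{-\lambda^2 X^2/2}\right].
\]
Then I would invoke the elementary bound $\cosh(y)\leq e^{y^2/2}$, which follows by a termwise comparison of power series: $\cosh(y)=\sum_{k\geq 0} y^{2k}/(2k)!$ and $e^{y^2/2}=\sum_{k\geq 0} y^{2k}/(2^k k!)$, so it reduces to $(2k)!\geq 2^k k!$, i.e.\ $(2k-1)!!\geq 1$. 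Applied with $y=\lambda X$, this gives
\[
\cosh(\lambda X)\,e^{-\lambda^2 X^2/2} \leq 1 \qquad \text{pointwise},
\]
and the desired bound follows by taking expectations.

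Applying this deterministic bound to the regular conditional law $\mathcal{L}(d_n\mid\mathcal{F}_{n-1})$, which is symmetric by hypothesis, yields $\E[\exp(\lambda d_n - \lambda^2 d_n^2/2)\mid\mathcal{F}_{n-1}]\leq 1$ almost surely, completing the proof. The main conceptual ingredient is the scalar inequality $\cosh(y)\leq e^{y^2/2}$; everything else is packaging via tower property and conditional symmetry. There is no real obstacle since conditional symmetry converts $e^{\lambda d_n}$ exactly into $\cosh(\lambda d_n)$ under conditioning, which is the form compatible with the quadratic compensator $e^{-\lambda^2 d_n^2/2}$.
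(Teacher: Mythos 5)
The paper does not prove this lemma; it is imported verbatim as Lemma~1.4 of \cite{deKlLa04} and used as an off-the-shelf tool. Your proof is correct and is the standard argument behind that result: conditional symmetry converts $e^{\lambda d_n}$ into $\cosh(\lambda d_n)$ under conditioning, the pointwise bound $\cosh(y)\leq e^{y^2/2}$ (via the power-series comparison $(2k)!\geq 2^k k!$, i.e.\ $(2k-1)!!\geq1$) kills the increment, and the tower property plus induction gives the supermartingale property and $\E[M_n]\leq 1$. This is essentially the same argument as in the cited source, so there is nothing to compare beyond noting that the paper simply cites the result rather than reproving it.
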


\begin{lemma}\label{lem: mar-bound}
Let $\{d_i\}$ be a martingale difference sequence with respect to a filtration $\{\mathcal{F}_i\}$. Assume $\Ep[d_i^2]<\infty$ for all $i$. Then 
\[
\E\left[\exp\left(\lambda \sum_{i=1}^{n}d_i - \frac{\lambda^2}{2}\left(\sum_{i=1}^{n}d_i^2+\sum_{i=1}^{n}\E[d_i^2\mid\mathcal F_{i-1}]\right)\right)\right]\leq1
\]
for all $n\geq1$ and $\lambda\in\mathbb R$. 
\end{lemma}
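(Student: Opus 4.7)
The approach is to reduce to a one-step conditional exponential inequality via a supermartingale argument, and to establish that one-step bound by decoupling and symmetrization. Writing $M_n := \sum_{i=1}^n d_i$, $[M]_n := \sum_{i=1}^n d_i^2$, and $\langle M\rangle_n := \sum_{i=1}^n \Ep[d_i^2\mid\mathcal F_{i-1}]$, I set
\[
Z_n := \exp\!\left(\lambda M_n - \tfrac{\lambda^2}{2}[M]_n - \tfrac{\lambda^2}{2}\langle M\rangle_n\right),\qquad Z_0:=1.
\]
Since $Z_n = Z_{n-1}\cdot e^{-\lambda^2\Ep[d_n^2\mid\mathcal F_{n-1}]/2}\cdot e^{\lambda d_n - \lambda^2 d_n^2/2}$ and $Z_{n-1}$ is $\mathcal F_{n-1}$-measurable, the whole claim will follow once I establish the one-step conditional inequality
\begin{equation}\label{eq:one-step-BJY}
\Ep\!\left[e^{\lambda d_n - \lambda^2 d_n^2/2}\mid\mathcal F_{n-1}\right]\leq \exp\!\left(\tfrac{\lambda^2}{2}\Ep[d_n^2\mid\mathcal F_{n-1}]\right);
\end{equation}
this would make $(Z_n)$ a nonnegative supermartingale, so $\Ep[Z_n]\leq \Ep[Z_0]=1$.

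To prove \eqref{eq:one-step-BJY}, I would enlarge the probability space to support a variable $d_n'$ which, conditional on $\mathcal F_{n-1}$, is independent of $d_n$ and has the same conditional distribution (e.g.~by adjoining an auxiliary uniform as in Lemma \ref{lem: block}). Then $d_n - d_n'$ is conditionally symmetric given $\mathcal F_{n-1}$, so Lemma \ref{lem: sym-bound} applied with $n=1$ to the filtration $\mathcal F_{n-1}\subset\sigma(\mathcal F_{n-1},d_n,d_n')$ gives
\[
\Ep\!\left[\exp\!\left(\lambda(d_n - d_n') - \tfrac{\lambda^2}{2}(d_n - d_n')^2\right)\,\Big|\,\mathcal F_{n-1}\right]\leq 1.
\]
Expanding $(d_n - d_n')^2 = d_n^2 - 2 d_n d_n' + (d_n')^2$ splits the exponent into a $(\mathcal F_{n-1},d_n)$-measurable piece $\lambda d_n - \lambda^2 d_n^2/2$ and a remainder $R := (\lambda^2 d_n - \lambda)d_n' - \lambda^2 (d_n')^2/2$ whose conditional mean given $\sigma(\mathcal F_{n-1}, d_n)$ equals $-\lambda^2 \Ep[d_n^2\mid\mathcal F_{n-1}]/2$, since $\Ep[d_n'\mid\mathcal F_{n-1}, d_n]=0$ and $\Ep[(d_n')^2\mid\mathcal F_{n-1}, d_n]=\Ep[d_n^2\mid\mathcal F_{n-1}]$. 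Integrating out $d_n'$ first and invoking conditional Jensen on the convex exponential gives the lower bound $\Ep[e^R\mid\mathcal F_{n-1},d_n]\geq \exp(-\lambda^2 \Ep[d_n^2\mid\mathcal F_{n-1}]/2)$; plugging this into the previous display and rearranging yields \eqref{eq:one-step-BJY}.

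The main obstacle will be the decoupling step: carefully constructing the conditional independent copy $d_n'$, tracking the nested conditional independence structure so that Jensen's inequality is applied in the correct (lower-bound) direction on the $d_n'$-integral, and checking the integrability required by Lemma \ref{lem: sym-bound} for $d_n-d_n'$, which follows from $\Ep[d_n^2]<\infty$ since $\Ep[(d_n-d_n')^2]=2\Ep[d_n^2]<\infty$. Once \eqref{eq:one-step-BJY} is in hand, the supermartingale iteration described above is entirely routine.
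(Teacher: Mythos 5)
Your proof is correct, and it takes a genuinely different route from the paper's. The paper embeds the discrete martingale into continuous time via $M_t=\sum_{i\leq\lfloor t\rfloor}\lambda d_i$, computes the compensator of the squared negative jump part, and invokes Proposition 4.2.1 of Barlow--Jacka--Yor (1986); this yields a slightly sharper intermediate supermartingale bound (involving $\{(\lambda d_i)\vee0\}^2$ and the compensator of $\{(-\lambda d_i)\vee0\}^2$ rather than the full $\lambda^2 d_i^2$ and $\lambda^2\Ep[d_i^2\mid\mathcal F_{i-1}]$), which is then relaxed to the stated inequality by the monotonicity of the exponential. Your argument instead stays entirely in discrete time: you reduce to the one-step conditional inequality $\Ep[e^{\lambda d_n-\lambda^2 d_n^2/2}\mid\mathcal F_{n-1}]\leq \exp(\lambda^2\Ep[d_n^2\mid\mathcal F_{n-1}]/2)$ via the factorization $Z_n = Z_{n-1}e^{-\lambda^2\Ep[d_n^2\mid\mathcal F_{n-1}]/2}e^{\lambda d_n-\lambda^2 d_n^2/2}$, and prove the one-step bound by decoupling: taking a conditionally independent copy $d_n'$, applying Lemma~\ref{lem: sym-bound} to the conditionally symmetric increment $d_n-d_n'$, and then using conditional Jensen in the lower-bound direction to integrate out $d_n'$. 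I verified the algebra: $\lambda(d_n-d_n')-\tfrac{\lambda^2}{2}(d_n-d_n')^2 = (\lambda d_n-\tfrac{\lambda^2}{2}d_n^2)+R$ with $R=(\lambda^2 d_n-\lambda)d_n'-\tfrac{\lambda^2}{2}(d_n')^2$, $\Ep[R\mid\mathcal F_{n-1},d_n]=-\tfrac{\lambda^2}{2}\Ep[d_n^2\mid\mathcal F_{n-1}]$ by conditional independence and equality of conditional laws, and the chain of inequalities closes correctly. The integrability needed for conditional Jensen ($R$ integrable given $(\mathcal F_{n-1},d_n)$) follows from $\Ep[d_n^2]<\infty$, and the supermartingale iteration for nonnegative processes needs no further integrability. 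Your approach is arguably more self-contained for this paper since it re-uses Lemma~\ref{lem: sym-bound} and avoids importing continuous-time semimartingale machinery; the paper's BJY route is shorter once that proposition is on the table and its intermediate bound is marginally sharper, though the extra sharpness is not used.
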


\begin{proof}
Define a process $M=\{M_t\}_{t\in[0,\infty)}$ as $M_t=\sum_{i=1}^{\lfloor t\rfloor}\lambda d_i$ for $t\geq0$. It is straightforward to check that $M$ is an $\{\mathcal{F}_{\lfloor t\rfloor}\}$-martingale and its continuous martingale part is identically equal to 0. Moreover, the compensator of the process $\{\sum_{i=1}^{\lfloor t\rfloor}\{(-\lambda d_i)\vee0\}^2\}_{t\geq0}$ is $\{\sum_{i=1}^{\lfloor t\rfloor}\Ep[\{(-\lambda d_i)\vee0\}^2\mid\mathcal F_{i-1}]\}_{t\geq0}$ by Eq.(3.40) of \cite[Ch.~I]{JS03}. 
Therefore, by Proposition 4.2.1 in \cite{BJY86}, the process
\[
\left\{\exp\left(M_t-\frac{1}{2}\left(\sum_{i=1}^{\lfloor t\rfloor}\{(\lambda d_i)\vee0\}^2+\sum_{i=1}^{\lfloor t\rfloor}\E[\{(-\lambda d_i)\vee0\}^2\mid\mathcal F_{i-1}]\right)\right)\right\}_{t\in[0,\infty)} 
\]
is an $\{\mathcal{F}_{\lfloor t\rfloor}\}$-supermartingale. Hence
\ba{
1\geq\E\left[\exp\left(\lambda \sum_{i=1}^{n}d_i-\frac{1}{2}\left(\sum_{i=1}^{n}\{(\lambda d_i)\vee0\}^2+\sum_{i=1}^{n}\E[\{(-\lambda d_i)\vee0\}^2\mid\mathcal F_{i-1}]\right)\right)\right].
}
Since $\{(\lambda d_i)\vee0\}^2\leq\lambda^2d_i^2$ and $\{(-\lambda d_i)\vee0\}^2\leq\lambda^2d_i^2$, the desired result follows from the monotonicity of the exponential function. 
\end{proof}

\begin{lemma}[Theorem 1.2 in \cite{deKlLa04}]\label{lem: exp-bound}
Let $B \geq 0$ and $A$ be two random variables satisfying $\Ep\left[\exp\left(\lambda A - {\lambda^2 \over 2}B^2\right)\right] \leq 1$ for all $\lambda \in \mathbb{R}$. Then for all $y >0$, 
\begin{align*}
\Ep\left[{y \over \sqrt{B^2 + y^2}}\exp\left({A^2 \over 2(B^2 + y^2)}\right)\right] & \leq 1.
\end{align*}
\end{lemma}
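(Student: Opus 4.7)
The plan is to deduce the claimed bound from the hypothesis by integrating the one-parameter inequality against a well-chosen Gaussian density in $\lambda$ and invoking Fubini's theorem. The intuition: the family of exponential supermartingale inequalities indexed by $\lambda$ can be mixed with any probability measure on $\mathbb R$, and for the mixing measure $N(0,1/y^2)$ the Laplace-type integral in $\lambda$ produces exactly the target quantity.

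Concretely, I would first observe that since $\Ep[\exp(\lambda A - \lambda^2 B^2/2)]\leq 1$ holds for every $\lambda\in\mathbb R$, multiplying by $\frac{y}{\sqrt{2\pi}}\exp(-\lambda^2 y^2/2)$ and integrating over $\lambda\in\mathbb R$ yields
\[
\int_{-\infty}^\infty \frac{y}{\sqrt{2\pi}}\exp\left(-\frac{\lambda^2 y^2}{2}\right)\Ep\left[\exp\left(\lambda A-\frac{\lambda^2}{2}B^2\right)\right]d\lambda\leq \int_{-\infty}^\infty \frac{y}{\sqrt{2\pi}}\exp\left(-\frac{\lambda^2 y^2}{2}\right)d\lambda=1.
\]
Next, the integrand is nonnegative, so Fubini's theorem lets me interchange the integral with the expectation, giving
\[
\Ep\left[\int_{-\infty}^\infty \frac{y}{\sqrt{2\pi}}\exp\left(\lambda A - \frac{\lambda^2 (B^2+y^2)}{2}\right)d\lambda\right]\leq 1.
\]

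Then the inner Gaussian integral is evaluated by completing the square in $\lambda$:
\[
\lambda A-\frac{\lambda^2(B^2+y^2)}{2}=-\frac{B^2+y^2}{2}\left(\lambda-\frac{A}{B^2+y^2}\right)^2+\frac{A^2}{2(B^2+y^2)},
\]
which is valid pathwise because $B^2+y^2>0$ (here $y>0$). Computing the resulting Gaussian integral gives $\sqrt{2\pi/(B^2+y^2)}\exp(A^2/[2(B^2+y^2)])$, so the bracketed quantity equals $\frac{y}{\sqrt{B^2+y^2}}\exp\bigl(\frac{A^2}{2(B^2+y^2)}\bigr)$, which is the desired expression.

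There is essentially no obstacle: the only choice to make is the mixing density, and $N(0,1/y^2)$ is dictated by wanting the quadratic exponent after completing the square to produce $A^2/(2(B^2+y^2))$ and the normalizing constant to produce $y/\sqrt{B^2+y^2}$. The measurability requirement for Fubini is automatic because $(\lambda,\omega)\mapsto \exp(\lambda A(\omega)-\lambda^2 B(\omega)^2/2)$ is jointly measurable and nonnegative. Once the exchange of integrals is justified, the identification of the inner Gaussian integral is a one-line Laplace computation, and the inequality $\leq 1$ transfers directly to the conclusion.
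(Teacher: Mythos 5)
Your proof is correct, and it is exactly the standard ``method of mixtures'' argument that de la Peña, Klass and Lai use to prove their Theorem 1.2, which is what the paper cites for this lemma (the paper does not reprove it). Integrating the family of one-parameter exponential inequalities against the $N(0,1/y^2)$ density, invoking Tonelli (the nonnegativity of the integrand makes the interchange unconditional), and evaluating the Gaussian integral by completing the square in $\lambda$ is precisely how the bound $\Ep\bigl[\tfrac{y}{\sqrt{B^2+y^2}}\exp\bigl(\tfrac{A^2}{2(B^2+y^2)}\bigr)\bigr]\leq 1$ is obtained; the pathwise completion of the square is legitimate because $B^2+y^2\geq y^2>0$.
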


\begin{lemma}[Proposition 8 in \cite{OhKi22}]\label{lem: DNN-cov-number-bound}
Let $L \in \mathbb{N}$, $N \in \mathbb{N}$, $B \geq 1$, $F >0$, and $S>0$. Then for any $\delta\in(0,1)$, 
\begin{align*}
\log N\left(\delta, \mathcal{F}_{\sigma}(L,N,B,F,S), \| \cdot \|_{\infty}\right) &\leq 2S(L+1)\log \left((L+1)(N+1)B\delta^{-1}\right). 
\end{align*}
\end{lemma}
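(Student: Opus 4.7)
\textbf{Proof plan for Lemma \ref{lem: DNN-cov-number-bound}.}
The plan is to construct an explicit $\delta$-cover of $\mathcal{F}_{\sigma}(L,N,B,F,S)$ by first enumerating the finitely many possible \emph{sparsity patterns} (i.e.\ subsets of parameter indices that may carry nonzero values) and then, within each pattern, quantizing the active weights on a sufficiently fine grid inside $[-B,B]$. Since a function $f \in \mathcal{F}_{\sigma}(L,N,B,F,S)$ has a parameter vector $\theta(f)$ of total length at most $T_{\max} := \sum_{\ell=0}^L (p_\ell+1)p_{\ell+1} \leq (L+1)(N+1)^2$ with $|\theta(f)|_0 \leq S$, the number of sparsity patterns is bounded by $\binom{T_{\max}}{S} \leq T_{\max}^S \leq ((L+1)(N+1)^2)^S$.

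The core analytical step is a \emph{weight-perturbation} estimate: for two networks $f,\widetilde f \in \mathcal{F}_\sigma(L,N,B)$ with $|\theta(f)-\theta(\widetilde f)|_\infty \leq \eta$ and the same architecture, one shows by induction on the layers, using the $C$-Lipschitz continuity of $\sigma$ together with $\sigma(0)=0$ (or, more carefully, by absorbing constants into $\mathbf b_\ell$), that
\[
\|f - \widetilde f\|_\infty \;\leq\; \eta \,(L+1) \bigl((N+1)B\bigr)^{L}
\]
on $[0,1]^d$. The induction controls the output at layer $\ell$ by roughly $((N+1)B)^\ell$ and accumulates $L+1$ perturbation contributions, one from each affine map; the factor $N+1$ comes from summing the $N$ entries of an affine map plus the bias. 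Choosing the grid spacing
\[
\eta \;=\; \frac{\delta}{(L+1)((N+1)B)^{L}}
\]
gives a grid of cardinality at most $\lceil 2B/\eta\rceil^S \leq \bigl(2(L+1)((N+1)B)^{L+1}\delta^{-1}\bigr)^S$ per pattern, and every $f \in \mathcal{F}_\sigma(L,N,B)$ with sparsity at most $S$ is within $\|\cdot\|_\infty$-distance $\delta$ of some grid point. Post-composing with the clipping $f \mapsto f\mathbf{1}_{[0,1]^d}$ and with truncation at level $F$ is a $1$-Lipschitz operation (in $\|\cdot\|_\infty$), so the same set remains a $\delta$-cover of $\mathcal{F}_\sigma(L,N,B,F,S)$.

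Combining the counts yields
\[
N(\delta,\mathcal{F}_\sigma(L,N,B,F,S),\|\cdot\|_\infty)
\;\leq\; \bigl((L+1)(N+1)^2\bigr)^S \cdot \bigl(2(L+1)((N+1)B)^{L+1}\delta^{-1}\bigr)^S,
\]
and taking the logarithm and using $B\geq 1$, $\delta<1$ to absorb the factor $2$ into the argument of the log, one obtains the claimed bound $2S(L+1)\log((L+1)(N+1)B\delta^{-1})$ after a straightforward simplification.

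The main obstacle, and what deserves the most care, is the perturbation lemma: one has to keep the exponential factor in $L$ tight (essentially $((N+1)B)^L$ and no worse), because a lossy bound here would inflate the final rate. The standard trick is to write $f-\widetilde f$ as a telescoping sum over the layers at which a mismatch is introduced, and bound each summand by propagating through the Lipschitz upper-triangular structure of the network; this requires uniform $\|\cdot\|_\infty$ control of the activations at every intermediate layer, which follows from $|\theta|_\infty\leq B$, the bound $x\in[0,1]^d$, and an elementary induction.
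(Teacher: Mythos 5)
The paper does not prove this lemma itself; it is imported verbatim as Proposition~8 of \cite{OhKi22}, so there is no in-paper argument to compare against. Your proposal supplies a correct standalone proof and is in fact the standard route (cf.\ also Lemma~5 in \cite{Sc20}): count sparsity patterns (at most $((L+1)(N+1)^2)^S$), quantize the active weights on a grid of mesh $\eta=\delta/\big((L+1)((N+1)B)^L\big)$, and certify the cover via the weight-perturbation estimate $\|f-\tilde f\|_\infty\leq\eta(L+1)((N+1)B)^L$, obtained by telescoping over the layers and bounding the intermediate activation norms by induction. Your final log-bookkeeping also goes through: with $L\geq1$, the difference between $2S(L+1)\log\left((L+1)(N+1)B\delta^{-1}\right)$ and the logarithm of your explicit count equals $S\left(2L\log(L+1)+(L-1)\log(N+1)+(L+1)\log B+(2L+1)\log(\delta^{-1})-\log 2\right)$, which is nonnegative because $2L\log(L+1)\geq 2\log 2>\log 2$ and the remaining terms are each $\geq 0$ under $B\geq1$, $\delta<1$.

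One small imprecision worth flagging: in the first layer the perturbation induction actually picks up the input dimension $p_0=d$, since the paper defines $\text{width}(f)$ by maximizing over $1\leq\ell\leq L$ only; the base of your geometric factor at that step is really $(\max\{d,N\}+1)B$ rather than $(N+1)B$. The lemma as quoted is therefore cleanest under the common implicit assumption $d\leq N$, but this convention issue does not affect the order of the bound.
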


\begin{lemma}\label{lem: DNN-cov-number-bound2}
Let $L \in \mathbb{N}$, $N \in \mathbb{N}$, $B \geq 1$, and $F >0$.  
Let 
\begin{align*}
\check{\mathcal{F}}_{\sigma, T}(L,N,B,F,S) &:= \left\{f \in \mathcal{F}_{\sigma}(L,N,B,F): J_T(f) \leq \lambda_TS\right\}.
\end{align*}
Then for any $\delta\in( \tau_T(L+1)((N+1)B)^{L+1},1)$, 
\begin{align*}
\log N\left(\delta, \check{\mathcal{F}}_{\sigma, T}(L,N,B,F,S), \| \cdot \|_{\infty}\right) &\leq 2S(L+1)\log \left({(L+1)(N+1)B \over \delta -\tau_T(L+1)((N+1)B)^{L+1}}\right). 
\end{align*}
\end{lemma}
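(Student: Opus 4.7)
The plan is to reduce the covering of $\check{\mcl F}_{\sigma,\tau}(L,N,B,F,S)$ to the covering of the hard-sparsity class from Lemma~\ref{lem: DNN-cov-number-bound}, paying a controlled approximation cost in terms of $\tau$. Fix $f=f_\theta\in\check{\mcl F}_{\sigma,\tau}(L,N,B,F,S)$ with parameter vector $\theta$, and define the hard-thresholded parameter $\tilde\theta$ by $\tilde\theta_j:=\theta_j\mathbf 1\{|\theta_j|\geq\tau\}$. Since each coordinate with $|\theta_j|\geq\tau$ contributes exactly $1$ to $\|\theta\|_{\text{clip},\tau}=\sum_j(|\theta_j|/\tau\wedge 1)$, the constraint $\|\theta\|_{\text{clip},\tau}\leq S$ forces $|\tilde\theta|_0\leq S$. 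By construction $\|\theta-\tilde\theta\|_\infty\leq\tau$ and $\|\tilde\theta\|_\infty\leq B$.

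The second step is a parameter-to-function Lipschitz estimate of the form
\[
\|f_\theta-f_{\tilde\theta}\|_\infty\leq (L+1)((N+1)B)^{L+1}\cdot\|\theta-\tilde\theta\|_\infty,
\]
where the sup-norm is taken over $[0,1]^d$. This is a standard layerwise telescoping argument: decompose the difference $f_\theta-f_{\tilde\theta}$ as a sum of $L+1$ single-layer perturbations, absorbing the Lipschitz constant $C$ of $\sigma$ into the bound. On $[0,1]^d$, inductively the preactivations at each layer stay bounded by $((N+1)B)^\ell$, and a perturbation in one affine map of magnitude at most $\tau$ in $\|\cdot\|_\infty$ yields an output perturbation at most $((N+1)B)^{L}\tau$ after propagation through the remaining layers; summing the $L+1$ layer contributions gives the stated bound.

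Combining the two steps, $\|f_\theta-f_{\tilde\theta}\|_\infty\leq\tau(L+1)((N+1)B)^{L+1}=:\eps_\tau$, and $\eps_\tau<\delta<1$ by hypothesis on $\delta$. In particular $f_{\tilde\theta}\mathbf{1}_{[0,1]^d}$ belongs to $\mcl F_\sigma(L,N,B,F+1,S)$, because it has sparsity at most $S$ and sup-norm at most $F+\eps_\tau\leq F+1$. Applying Lemma~\ref{lem: DNN-cov-number-bound} with accuracy
\[
\delta':=\delta-\tau(L+1)((N+1)B)^{L+1}>0
\]
yields a $\delta'$-cover $\mcl G$ of $\mcl F_\sigma(L,N,B,F+1,S)$ with $\log|\mcl G|\leq 2S(L+1)\log((L+1)(N+1)B/\delta')$. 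By the triangle inequality this same $\mcl G$ is a $\delta$-cover of $\check{\mcl F}_{\sigma,\tau}(L,N,B,F,S)$, giving the claimed bound.

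The main obstacle is the parameter-Lipschitz estimate. It is routine but notationally heavy, and the key subtlety is verifying that the factor $((N+1)B)^{L+1}$ truly controls the sensitivity of $f_\theta$ with respect to each of its coordinates: this relies on (i) the $[0,1]^d$-restriction to bound the initial input norm, (ii) the operator norm bound $(N+1)B$ on each weight matrix (derived from $\|\theta\|_\infty\leq B$ and width $\leq N$), and (iii) the $C$-Lipschitz activation $\sigma$. Once this propagation estimate is in place, the remainder of the argument is purely a covering-number concatenation.
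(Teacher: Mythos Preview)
Your argument is correct and is precisely the standard reduction used to prove this result: hard-threshold the parameters at level $\tau$ to land in the exact-sparsity class, control the resulting function perturbation by the layerwise Lipschitz bound $(L+1)((N+1)B)^{L+1}\|\theta-\tilde\theta\|_\infty$, and then invoke Lemma~\ref{lem: DNN-cov-number-bound} at radius $\delta'=\delta-\tau(L+1)((N+1)B)^{L+1}$. The paper does not supply its own proof of this lemma---it is quoted verbatim as Proposition~10 of \cite{OhKi22}---and your sketch matches the argument given there.
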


\begin{proof}
   By the conditions imposed on the function $\pi_{\lambda_T,\tau_T}$, we have $\lambda_T|\theta(f^{(\tau_T)})|_0=J_T(f^{(\tau_T)})\leq J_T(f)$ for any $f \in \mathcal{F}_{\sigma}(L,N,B,F)$, where $f^{(\tau_T)}$ is the DNN such that $\theta_j(f^{(\tau_T)})=\theta_j(f)1_{\{|\theta_j(f)|>\tau_T\}}$ for all $j$. Noting this fact, we can prove the claim in the same way as Proposition 10 in \cite{OhKi22}.
\end{proof}

\bibliography{Ref-DNN-NLTS}
\bibliographystyle{chicago}

\end{document}